\newcommand{\R}{{\mathbb R}}
\newcommand{\N}{{\mathbb N}}
\newcommand{\eps}{\varepsilon}
\DeclareMathOperator*{\argmin}{arg\,min}
\newcommand{\funarrow}{\rightarrow}
\newcommand{\define}{\stackrel{\Delta}{=}}
\newcommand{\maxindex}{\hat{\iota}}
\newcommand{\set}[2]{\left\{ {#1} : {#2} \right\}}
\newcommand{\lebesgue}[2]{\mathcal{L}^{#1}\left({#2}\right)}
\newcommand{\vnorm}[1]{\left\| {#1} \right\|}
\renewcommand{\cal}[1]{\mathcal{#1}}
\renewcommand{\cal}[1]{\mathcal{#1}}
\begin{document}

\title*{Sparse Control of Multiagent Systems}
\author{Mattia Bongini and Massimo Fornasier}
\institute{Mattia Bongini \at Technische Universit\"at M\"unchen, Fakult\"at Mathematik, Boltzmannstra{\ss}e 3, D-85748 Garching, Germany, \email{mattia.bongini@ma.tum.de}
\and Massimo Fornasier \at Technische Universit\"at M\"unchen, Fakult\"at Mathematik, Boltzmannstra{\ss}e 3, D-85748 Garching, Germany \email{massimo.fornasier@ma.tum.de}}
%
%
\maketitle

%
%


\abstract*{In recent years, numerous studies have focused on the mathematical modeling of social dynamics, with \textit{self-organization}, i.e., the autonomous pattern formation, as the main driving concept. Usually, first or second order models are employed to reproduce, at least qualitatively, certain global patterns (such as bird flocking, milling schools of fish or queue formations in pedestrian flows, just to mention a few). It is, however, common experience that self-organization does not always spontaneously occur in a society. In this review chapter we aim to describe the limitations of decentralized controls in restoring certain desired configurations and to address the question of whether it is possible to \textit{externally} and \textit{parsimoniously} influence the dynamics to reach a given outcome. More specifically, we address the issue of finding the sparsest control strategy for finite agent-based models in order to lead the dynamics optimally towards a desired pattern.}

\abstract{In recent years, numerous studies have focused on the mathematical modeling of social dynamics, with \textit{self-organization}, i.e., the autonomous pattern formation, as the main driving concept. Usually, first or second order models are employed to reproduce, at least qualitatively, certain global patterns (such as bird flocking, milling schools of fish or queue formations in pedestrian flows, just to mention a few). It is, however, common experience that self-organization does not always spontaneously occur in a society. In this review chapter we aim to describe the limitations of decentralized controls in restoring certain desired configurations and to address the question of whether it is possible to \textit{externally} and \textit{parsimoniously} influence the dynamics to reach a given outcome. More specifically, we address the issue of finding the sparsest control strategy for finite agent-based models in order to lead the dynamics optimally towards a desired pattern.}

\section{Introduction}

The autonomous formation of patterns in multiagent dynamical systems is a fascinating phenomenon which has spawned an enormous wealth of interdisciplinary studies: from social and economic networks \cite{battiston,currarini2009economic}, passing through cell aggregation and motility \cite{camazineselforganization,kese70,KocWhi98,be07}, all the way to coordinated animal motion \cite{MR2507454,ChuDorMarBerCha07,cristiani2010modeling,couzinlaneformation,couzin2005N,CS,Niw94,PE99,ParVisGru02,Rom96,TonTu95,YEECBKMS09} and crowd dynamics \cite{albi2015invisible,cristiani2011multiscale,CucSmaZho04,MR2438215}. Beyond biology and sociology, the principles of self-organization in multiagent systems are employed in engineering and information science to produce cheap, resilient, and efficient squadrons of autonomous machines to perform predefined tasks \cite{arvin2014development} and to render swarms of animals \cite{reynolds1987flocks} and hair/fur textures in CGI animations \cite{pixarhair}. The scientific literature on the subject is vast and ever-growing: the interested reader may be addressed to \cite{bak2013nature,CCH13,cafotove10,viza12} and references therein for further insights on the topic.

A common feature of all those studies is that self-organization is the result of the superimposition of binary interactions between agents amplified by an accelerating feedback loop. This reinforcement process is necessary to give momentum to the multitude of feeble local interactions and to eventually let a global pattern appear. Typically, the strength of such interaction forces is a function of the ``social distance'' between agents: for instance, birds align with their closest neighbors \cite{parisi08} and people agree easier with those who already conform to their beliefs \cite{krause02}. Some of the forces of the system may be of cohesive type, i.e., they tend to reduce the distance between agents: whenever cohesive forces have a comparable strength at short and long range, we call these systems \textit{heterophilious}; if, instead, there is a long-range bias we speak of \textit{homophilious} societies \cite{motsch2014heterophilious}.
Heterophilious systems have a natural tendency to keep the trajectories of the agents inside a compact region, and therefore to exhibit stable asymptotic profiles, modeling the autonomous emergence of global patterns. On the other hand, self-organization in homophilious societies can be accomplished only conditionally to sufficiently high levels of initial coherence that allow the cohesive forces to keep the dynamics compact \cite{birdsofafeather}. Being such systems ubiquitous in real life (e.g., see \cite{kirman2007marginal}), it is legitimate to ask whether -- in case of lost cohesion -- 
additional forces acting on the agents of the system may restore stability and achieve pattern formation.


A first solution to facilitate self-organization is to consider \textit{decentralized control strategies}: these consist in assuming that each agent, besides being subjected to forces induced in a feedback manner by the rest of the population, follows an individual strategy to coordinate with the other agents. However, as it was clarified in \cite{bongini2015conditional}, even if we allow agents to self-steer towards consensus according to {\it additional} decentralized feedback rules computed with {\it local} information, their action results in general in a minor modification of the initial homophilious model, with no improvement in terms of promoting unconditional pattern formation. Hence, blindly insisting and believing on decentralized control is certainly fascinating, but rather wishful, as it does not secure self-organization.

Such additional forces may eventually be the result of an offline optimization among perfectly informed players: in this case we fall into the realm of Game Theory \cite{nash1950equilibrium,von2007theory}.
Games without an external regulator model situations where it is assumed that an automatic tendency to reach ``correct'' equilibria exists, like the stock market. However, also in this case such an optimistic view of the dynamics is often frustrated by evidences of the convergence to suboptimal configurations \cite{hardin1968tragedy}, whence the need of an external figure controlling the evolution of the system.

For all these reasons, in the seminal papers \cite{caponigro2013sparsefake,caponigro2015sparse} \textit{external} controls with limited strength were considered to promote self-organization in multiagent systems.
Notice that, in such situations, \textit{efficient} control strategies should 
target only few individuals of the population, instead of squandering resources on the entire group at once: taking advantage of the mutual dependencies between the agents, they should trigger a ripple effect that would spread their influence to the whole system, thus indirectly controlling the rest of the agents.
The property of control strategies to target only a small fraction of the total population is known in the mathematical literature as \textit{sparsity} \cite{tao,donoho,fora10}. 
The fundamental issue is the selection of the few agents to control: an effective criterion is to choose them as to maximize the decay rate of some Lyapunov functional associated to the stability of the desired pattern \cite{cohen1983absolute}.

As a paradigmatic case study, let us consider \textit{alignment models} \cite{CS,krause02}: these are dissipative systems where imitation is the dominant feedback mechanism and in which the emerging pattern is a state where agents are fully aligned, also called \textit{consensus}. For several of such models it has been proved that consensus emergence can be guaranteed regardless of the initial conditions of the system only if 
the alignment forces are sufficiently strong at far distance, see \cite{HaHaKim,haskovec2014note}; in case they are not, it is easy to provide counterexamples to the emergence of a consensus. If we were to use the criterion above to select a control strategy to steer the system to consensus, it would lead to a sparse control targeting at each instant only the agent farthest away from the mean consensus parameter. Surprisingly enough, for such systems not only this strategy works for every initial condition, but the control of the instantaneous leaders of the dynamics is more convenient than controlling simultaneously all agents. Therefore if, on the one side, the homophilious character of a society plays against its compactness, on the other side, it may plays at its advantage if we allow for sparse interventions to restore consensus.

The above results have more far-reaching potential as they can be extended to non-dissipative systems as well, like the Cucker-Dong model of attraction and repulsion \cite{cucker14}. 
In this model, agents autonomously organize themselves in a \textit{cohesive and collision-avoiding} configuration provided that the total energy is below a certain level. 
The sparse control strategy is able to raise this level considerably and it is optimal in maximizing the convergence of the energy functional towards it. However in this case, due to the singular non-conservative forces in play, it may be seen that sparse controllability is in general conditional to the choice of the initial conditions, as opposed to the unconditional controllability of alignment models.



The essential scope of this review chapter is to describe in more detail the aforementioned mechanisms relating sparse controllability and pattern formation. We do so by condensing the results of the papers \cite{bofo13,bongini2014sparse,bongini2015conditional,caponigro2015sparse}, addressing the limitations of decentralized control strategies, the sparse controllability of alignment models and the one of attraction repulsion models.

\section{Self-organization in dynamical communication networks}\label{sec:alignmentchap1}

We start from the analysis of general properties of \textit{alignment models}. 
Instances of these models are ubiquitous in nature since several species are able to interpret and instinctively reproduce certain manoeuvres that they perceive (e.g., fleeing from a danger, searching for food, performing defense tactics, etc.), see \cite{animalbehavior}.
Such systems may be seen as networks of agents with oriented information flow under possible link failure or creation, and can be effectively represented by means of directed graphs with edges possibly switching in time. 

A \textit{directed graph} $G$ on a set of nodes $A_1, \ldots, A_N$ is any subset of 
$\{A_1, \ldots, A_N\}^2$. Each pair $(A, B) \in G$ is called an \textit{edge from $A$ to $B$}, and a \textit{directed path from $A$ to $B$} in $G$ is a sequence of edges $(A, A_{i_1}),(A_{i_1},A_{i_2}), \ldots, (A_{i_k},B) \in G$. The graph $G$ is said to be \textit{strongly connected} if for any pair $A,B$ of distinct nodes there is a directed path from $A$ to $B$ and a directed path from $B$ to $A$.

When studying under which conditions networks of agents are able to self-organize, it is usually not enough to know if two nodes are connected: the strength of the interaction between them also matters. Hence, given a system of $N \in \N$ agents, for each pair of agents $i,j = 1, \ldots, N$ we denote by $g_{ij}(t) \in \R_+$ the weight of the link connecting $i$ with $j$: clearly, if $g_{ij}(t) = 0$, $i$ is not connected to $j$ at time $t$. The value $g_{ij}(t)$ can be seen as the relative intensity of the information exchange flowing from agent $i$ to agent $j$ at time $t \geq 0$. We shall assume for the moment that each weight function $g_{ij}: \R_+ \funarrow \R_+$ is piecewise continuous.

The weights $g_{ij}(\cdot)$ naturally induce a directed graph structure on the set of agents: we define, for any $\eps \geq 0$ and $t \geq 0$, the graph $G_{\eps}(t)$ as
\begin{align*}
G_{\eps}(t) \define \set{(i,j) \in \{1,\ldots, N\}^2}{g_{ij}(t) > \eps}.
\end{align*}
The \textit{adjacency matrix} $G_0(t)$ is the set of pairs $(i,j)$ for which the communication channel from $i$ to $j$ is active at time $t$.

As a prototypical example of a multiagent system and to quantitatively illustrate the concept of self-organization, we introduce \textit{alignment models}: if we denote by $\{v_1, \ldots, v_N\} \subset \R^d$ the states of the $N$ agents of our systems, then the instantaneous evolution of the state $v_i(t)$ of agent $i$ at time $t$ is given by 
\begin{align}\label{eq:graphdyn}
\dot{v}_i(t) = \sum^N_{j = 1} g_{ij}(t)(v_j(t) - v_i(t)), \quad i = 1,\ldots,N.
\end{align}
The meaning of the above system of differential equations is the following: at each instant $t \geq 0$, the state $v_i(t)$ of agent $i$ tends to the state $v_j(t)$ of agent $j$ with a speed that depends on the strength of the information exchange $g_{ij}(t)$.
Since \eqref{eq:graphdyn} is a system of ODEs with possibly discontinuous coefficients, we need for it a proper notion of solution.

\begin{definition}
Let $\{I_k\}_{k \in \N}$ denote a countable family of open intervals such that all the functions $g_{ij}$ are continuous on every $I_k$ and $\cup_{k \in \N} \overline{I}_k = \R_+$.
Given $v^0 = (v^0_1, \ldots, v^0_N) \in \R^{dN}$, we say that the curve $v = (v_1,\ldots,v_N): \R_+ \funarrow \R^{dN}$ is a \textit{solution} of \eqref{eq:graphdyn} with initial datum $v^0$ if
\begin{enumerate}[label=$(\roman*)$]
\item $v(0) = v^0$;
\item for every $i = 1, \ldots, N$ and $k \in \N$, $v_i$ satisfies \eqref{eq:graphdyn} on $I_k$.
\end{enumerate}
\end{definition}

The notion of self-organization that we are considering for system \eqref{eq:graphdyn} is that of \textit{consensus} or \textit{flocking}, which is the situation where the state variables of the agents asymptotically coincide.

\begin{definition}[Consensus for system \eqref{eq:graphdyn}]\label{def:consensusV}
Let $v: \R_+ \funarrow \R^{dN}$ denote a solution of \eqref{eq:graphdyn} with initial datum $v^0$. We say that $v(\cdot)$ \textit{converges to consensus} if there exists a $v^{\infty} \in \R^d$ such that, for every $i = 1, \ldots, N$, it holds
\begin{align*}
\lim_{t \rightarrow +\infty} \|v_i(t) - v^{\infty}\|_{\ell^d_2} = 0.
\end{align*}
The value $v^{\infty}$ is called the \textit{consensus state}.
\end{definition}

In the definition above, $\|\cdot\|_{\ell^d_2}$ stands for the Euclidean norm on $\R^d$. The subscript $\ell^d_2$ shall often be omitted whenever clear from context.

Roughly speaking, a system of agents satisfying \eqref{eq:graphdyn} converges to consensus regardless of the initial condition $v^0$ provided that the underlying communication graph is ``sufficiently connected''. With this we mean that each node must possess, over some dense collection of time intervals, a strong enough communication path to every other node in the network. This intuitive idea is made precise in the following result, whose proof can be found in \cite{haskovec2014note}. A similar answer for discrete-time systems was also provided in \cite{moreau2005stability}

\begin{theorem}\label{th:convexhull}
Let $v: \R_+ \funarrow \R^{dN}$ be a solution of \eqref{eq:graphdyn} with initial datum $v^0$. Suppose that there exists an $\eps > 0$ and a strongly connected directed graph $G$ on the set of agents on which the system spends an infinite amount of time, i.e.,
\begin{align*}
\lebesgue{1}{\set{t \geq 0}{G_{\eps}(t) = G}} = +\infty.
\end{align*}
Then $v(\cdot)$ converges to consensus with consensus state $v^{\infty}$ belonging to the convex hull of $\{v^0_1, \ldots, v^0_N\}$.
\end{theorem}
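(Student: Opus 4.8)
The plan is to reduce the theorem to a single scalar statement — that the diameter of the convex hull of the states shrinks to zero — and then to harvest that contraction from the strong connectivity of $G$ over the good time set.

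\textbf{Step 1: monotone support functions and a nested convex hull.} Fix a direction $\xi \in \R^d$ and set $M_\xi(t) = \max_i \langle v_i(t),\xi\rangle$ and $m_\xi(t) = \min_i \langle v_i(t),\xi\rangle$. On each interval $I_k$ the maps $t \mapsto \langle v_i(t),\xi\rangle$ are $C^1$, so $M_\xi$ is locally Lipschitz and, by Danskin's formula, its right derivative is the maximum of $\langle \dot v_i(t),\xi\rangle$ over the indices $i$ realizing the maximum. For any such $i$, $\langle \dot v_i,\xi\rangle = \sum_j g_{ij}(t)(\langle v_j,\xi\rangle - \langle v_i,\xi\rangle) \le 0$, because every bracket is $\le 0$ and $g_{ij}\ge 0$; hence $M_\xi$ is non-increasing, and symmetrically $m_\xi$ is non-decreasing. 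Writing $\mathcal{K}(t) = \mathrm{conv}\{v_1(t),\dots,v_N(t)\} = \bigcap_\xi \{x : \langle x,\xi\rangle \le M_\xi(t)\}$ gives $\mathcal{K}(t)\subseteq\mathcal{K}(s)$ for $s \le t$, in particular $\mathcal{K}(t)\subseteq\mathcal{K}(0)$. Thus the width $W_\xi(t)=M_\xi(t)-m_\xi(t)$ and $\diam(\mathcal{K}(t)) = \sup_{\|\xi\|=1} W_\xi(t)$ are non-increasing, and once I show $\diam(\mathcal{K}(t))\to 0$ the nested compact convex sets collapse to a single point $v^\infty \in \bigcap_t\mathcal{K}(t)\subseteq\mathcal{K}(0)=\mathrm{conv}\{v^0_1,\dots,v^0_N\}$, yielding both the convergence to consensus and the location of $v^\infty$. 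So the whole theorem reduces to proving $\diam(\mathcal{K}(t))\to 0$.

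\textbf{Step 2: finite-time contraction on good intervals.} The engine I would establish is a quantitative lemma: there exist $\delta\in(0,1)$ and $T>0$, depending only on $N$ and $\eps$, so that whenever $[a,a+T]\subseteq\{t : G_\eps(t)=G\}$ one has $W_\xi(a+T)\le(1-\delta)W_\xi(a)$ for every unit $\xi$. The mechanism is directed-path propagation: since $G$ is strongly connected on $N$ nodes, from any agent currently realizing $M_\xi$ there is a directed path of length $\le N-1$ to every other agent, each of whose edges carries weight $>\eps$; following that path and tracking the ordered values hop by hop (a Gr\"onwall/comparison estimate) drags the top value below $M_\xi(a)$ by a definite amount within time $T$, while $M_\xi$ can only decrease. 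Two observations make the constants uniform: the dynamics is linear, so the factor is scale-invariant in $\xi$ and in $W_\xi$; and any additional weight — larger strong weights, or the weak edges with $g_{ij}\le\eps$ — only adds inward pull and can never reduce the guaranteed contraction, so the one-sided bound $g_{ij}>\eps$ on $G$ suffices even though the weights are unbounded and time-varying inside the interval.

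\textbf{Step 3: accumulation, and the main obstacle.} With the lemma in hand the scheme is to exhaust the good set $S=\{t : G_\eps(t)=G\}$, which has $\mathcal{L}^1(S)=+\infty$, by successive good blocks and to multiply the contraction factors, so that after $n$ accrued blocks $\diam(\mathcal{K}(t))\le(1-\delta)^n\diam(\mathcal{K}(0))\to 0$. This is exactly where the hard part lies. The hypothesis controls only the total measure of $S$, not its geometry: $S$ may be fragmented into intervals whose lengths shrink to $0$, so there need not be infinitely many disjoint good sub-intervals of the fixed length $T$ that the finite-time lemma demands, and during the interspersed bad intervals the width is merely non-increasing, so one must argue that no progress is undone. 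Overcoming this requires upgrading the estimate to one that accumulates continuously in good time, ideally a differential inequality $\dot V(t)\le -c\,V(t)\,\chi_S(t)$ with $c=c(\eps,N)>0$ and $V$ comparable to $\diam(\mathcal{K}(t))^2$, which integrates to $V(t)\le V(0)\exp(-c\int_0^t\chi_S)\to 0$ as $\int_0^t\chi_S\to\infty$. For symmetric weights such a $V$ is the variance about the conserved mean and $c$ is the algebraic connectivity of $G$ via a Poincar\'e inequality; but for the genuinely directed, asymmetric, time-varying weights here the mean is not conserved and the instantaneous rate of the width can momentarily vanish, so the correct functional must be weighted by the left Perron vector of the $G$-Laplacian (or the argument recast through a coefficient of ergodicity of the associated flow). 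Producing a strictly positive rate $c$ that is insensitive to the magnitude of the weights and valid for arbitrarily short good intervals is the crux on which the whole proof turns.
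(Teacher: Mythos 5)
The paper does not actually prove Theorem \ref{th:convexhull}; it defers entirely to the cited reference \cite{haskovec2014note}, so there is no in-text argument to measure you against. Judged on its own, your Step 1 is correct and standard: the support functions $M_\xi$ are non-increasing because at a maximizer every term $g_{ij}(\langle v_j,\xi\rangle-\langle v_i,\xi\rangle)$ is nonpositive, the convex hulls are nested, and the theorem reduces to $\diam(\mathcal{K}(t))\to 0$. But from there the proposal is not a proof, and one of its uniformity claims is wrong. In Step 2 you assert a contraction factor $\delta$ depending only on $N$ and $\eps$, justified by the remark that extra weight ``only adds inward pull and can never reduce the guaranteed contraction.'' That is exactly backwards for the hop-by-hop propagation estimate: once an intermediate agent $j$ on the directed path has been dragged to $\langle v_j,\xi\rangle\le M_\xi(a)-c$, a very large weight $g_{jk}$ toward an agent $k$ still realizing the maximum pulls $j$ back up toward $M_\xi(a)$ arbitrarily fast — inward pull toward the top face of the hull is precisely what destroys the accumulated gain. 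The hypotheses give only piecewise continuity of the $g_{ij}$, hence no global upper bound, so a $\delta$ depending only on $N$, $\eps$ and the interval length cannot be extracted this way; any honest version of the lemma needs either a uniform upper bound on the weights or a functional insensitive to them, and you supply neither.

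The second gap you name yourself: the hypothesis is $\lebesgue{1}{\set{t\ge 0}{G_{\eps}(t)=G}}=+\infty$ with no structure on the good set, so there need not exist infinitely many disjoint good intervals of any fixed length $T$, and the finite-time lemma of Step 2 — even if it held — could never be invoked. Your proposed repair, a differential inequality of the form $\frac{d}{dt}V(t)\le -c\,V(t)\,\chi_S(t)$ with $c=c(\eps,N)>0$, is the right shape of statement to aim for, but you explicitly concede you cannot produce the rate $c$ for directed, asymmetric, unbounded, time-varying weights; the Perron-vector weighting you mention is itself time-dependent here and differentiating it reintroduces the unbounded weights. So the argument is never closed: the two quantitative inputs on which everything rests (the contraction constant and its accumulation over a fragmented good set) are both missing, and the first is supported by an incorrect monotonicity claim. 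As it stands the proposal is a correct reduction plus an accurate diagnosis of where the difficulty lies, not a proof of the theorem.
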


The above result is closely related, for instance, to \cite[Theorem 2.3]{motsch2014heterophilious}, which requires a stronger connectivity of the network of agents (the quantity $\eta_A$ in \cite[Equation (2.5)]{motsch2014heterophilious}) but also gives an explicit rate for the convergence towards $v^{\infty}$ (see \cite[Equation (2.6b)]{motsch2014heterophilious}).

Theorem \ref{th:convexhull} also says that, without further hypotheses on the interaction weights $g_{ij}$, the value of $v^{\infty}$ is rather an emergent property of the global dynamics of system \eqref{eq:graphdyn} than a mere function of the initial datum $v^0$. Nonetheless, it is relatively simple to identify assumptions on $g_{ij}$ for which the latter is true. For example, from a trivial computation follows
\begin{align*}
\frac{1}{N}\sum^N_{i = 1} \dot{v}_i(t) = \frac{1}{N}\sum^N_{j = 1} \left(\sum^N_{i = 1} g_{ij}(t) - \sum^N_{i = 1} g_{ji}(t)\right) v_j(t).
\end{align*}
Hence, if for every $t \geq 0$ the weight matrix $(g_{ij}(t))_{i,j = 1}^N$ has the property that $\sum^N_{i = 1} g_{ij}(t) = \sum^N_{i = 1} g_{ji}(t)$ for every $j = 1, \ldots, N$, then the average
\begin{align} \label{eq:meanvel}
\overline{v}(t) \define \frac{1}{N}\sum^N_{i = 1} v_i(t)
\end{align}
is an invariant of the dynamics. This implies that $v^{\infty} = \overline{v}(0)$ holds, i.e., the consensus state is only a function of the initial datum $v^0$.

\section{Consensus emergence in alignment models}\label{sec:alignmentexample}

In this section we shall see that the assumptions of Theorem \ref{th:convexhull} can actually be very restrictive and seldom met when dealing with specific instances of alignment models.

\subsection{Some classic examples of alignment models}

A general principle in opinion formation is the \textit{conformity bias}, i.e., agents weight more opinions that already conform to their beliefs. This can, actually, be extended to coordination in general, since intuitively it is easier to coordinate with ``near'' agents than ``far away'' ones. Formally, this is equivalent to asking that the weights $g_{ij}$ are a nonincreasing function of the distance between the states of the agents, i.e.,
\begin{align}\label{eq:weight}
g_{ij}(t) = a(\|v_i(t) - v_j(t)\|),
\end{align}
where $a:\R_+ \funarrow \R_+$ is a nonincreasing \textit{interaction kernel}. Notice that \eqref{eq:weight} trivially implies the invariance of the mean $\overline{v}$ (given by \eqref{eq:meanvel}), and that $v^{\infty} = \overline{v}(0)$, if it exists.

Several classic opinion formation models combine conformity bias with alignment. In the DW model, see \cite{weisbuch}, two random agents $i$ and $j$ update their opinions $v_i$ and $v_j$ to $1/2(v_i + v_j)$, provided they originally satisfy $\|v_i - v_j\| \leq R$, where $R > 0$ is fixed \textit{a priori}. Instead, in the popular \textit{bounded confidence} model of Hegselmann and Krause \cite{krause02}, opinions evolves according to the dynamics \eqref{eq:graphdyn} where the function $a$ has the form 
\begin{align*}
a(r) = \chi_{[0,R]}(r) \define \begin{cases}
1 &\text{ if } r \in [0,R],\\
0 &\text{ otherwise,}\\
\end{cases}
\end{align*}
for some fixed \textit{confidence radius} $R > 0$. The dynamics is thus given by the system of ODEs
\begin{align} \label{eq:krauseflock}
\dot{v}_i(t) = \frac{1}{|\Lambda_R(t,i)|}\sum^N_{j = 1} \chi_{[0,R]}(\|v_i(t) - v_j(t)\|)(v_j(t) - v_i(t)), \quad i = 1,\ldots, N,
\end{align}
where we have set
\begin{align}\label{eq:neighborset}
\Lambda_R(t,i) \define \set{j \in \{1,\ldots,N\}}{\|x_i(t) - x_j(t)\|\leq R},
\end{align}
and $|\Lambda_R(t,i)|$ stands for its cardinality. It is straightforward to design an instance of this model not fulfilling the hypothesis of Theorem \ref{th:convexhull}. Indeed, consider a group of $N = 2$ agents in dimension $d = 1$ with initial conditions $v_1(0) = -R$ and $v_2(0) = R$. Since $g_{12}(0) = g_{21}(0) = 0$, it follows that $G_{\eps}(t) = \emptyset$ for all $t \geq 0$ and for all $\eps \geq 0$.

Second-order models are necessary whenever we want to describe the dynamics of physical agents, like flocks of birds, herds of quadrupeds, schools of fish, and colonies of bacteria, where individuals are considered aligned whenever they move in the same direction, regardless of their position. Since in such cases it is necessary to perceive the velocities of the others in order to align, 
to describe the motion of the agents we need the pair position-velocity $(x,v)$, but this time only the velocity variable $v$ is the \textit{consensus parameter}. 

One of the first of such models, named \textit{Vicsek's model} in honor of one of its fathers, was introduced in \cite{vicsek1995novel}. Very much in the spirit of \eqref{eq:krauseflock}, it postulates that the evolution of the spatial coordinate $x_i$ and of the orientation $\theta_i \in [0,2\pi]$ in the plane $\R^2$ of the $i$-th agent follows the law of motion given by
\begin{align}
\left\{
\begin{aligned}
\begin{split}\label{eq:vicsek1}
\dot{x}_{i}(t) & = v_i(t) = \hat v \left(\begin{matrix}\cos(\theta_i(t))\\\sin(\theta_i(t))\end{matrix}\right), \\
\dot{\theta}_{i}(t) & = \frac{1}{|\Lambda_R(t,i)|} \sum_{j = 1}^N \chi_{[0,R]}\left(\|x_i(t) - x_j(t)\|\right)\left(\theta_{j}(t)-\theta_{i}(t)\right),
\end{split}\quad i = 1, \ldots, N,
\end{aligned}
\right.
\end{align}
where $\hat v > 0$ denotes the constant modulus of $v_i(t)$.
In this model, the orientation of the consensus parameter $v_i$ 
is adjusted with respect to the other agents according to a weighted average of the differences $\theta_j - \theta_i$. The influence of the $j$-th agent on the dynamics of the $i$-th one is a function of the (physical or social) distance between the two agents: if this distance is less than $R$, the agents interact by appearing in the computation of the respective future orientation.
\begin{figure}[!htb]
\centering
\includegraphics[width = 0.49\textwidth]{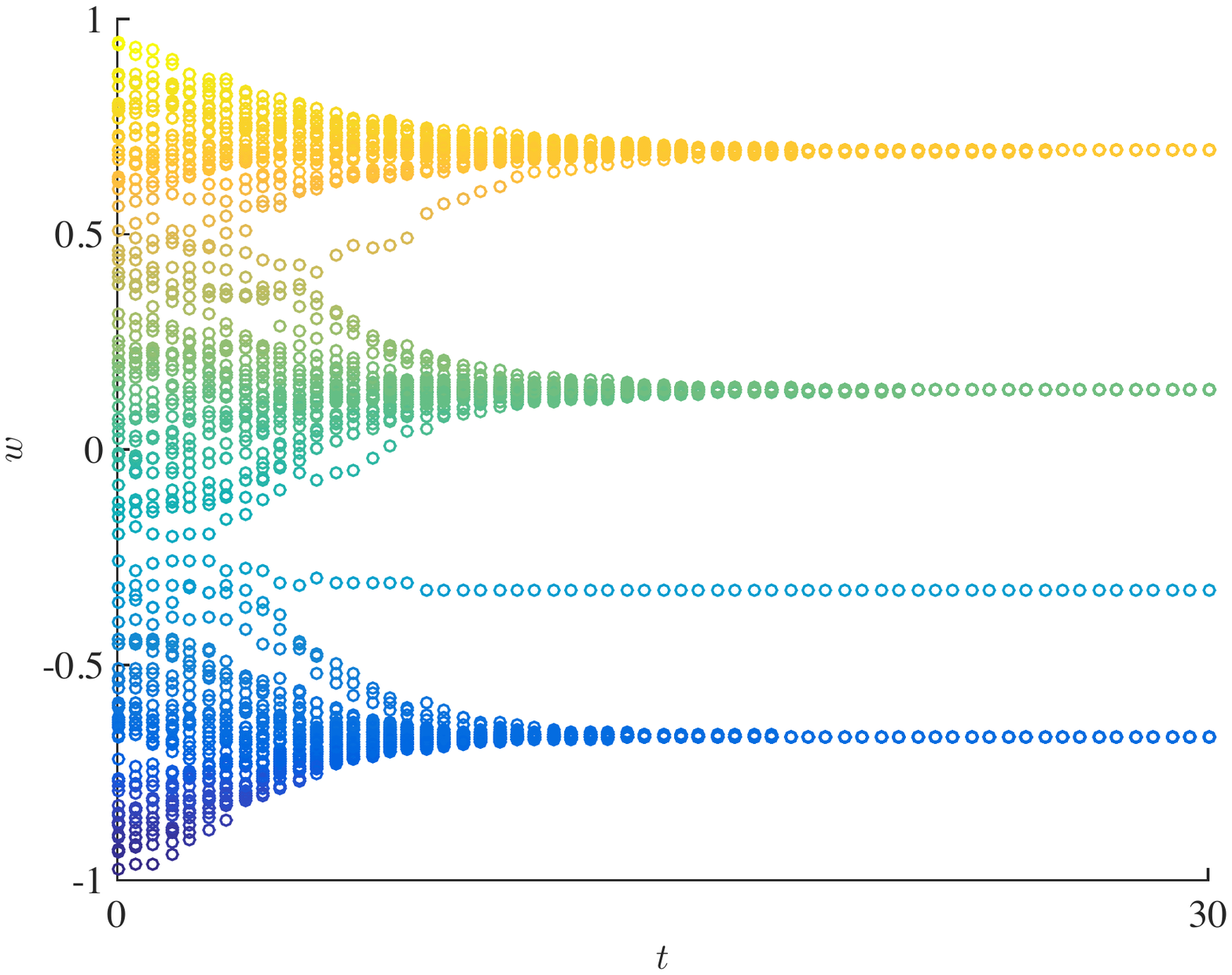}
\includegraphics[width = 0.49\textwidth]{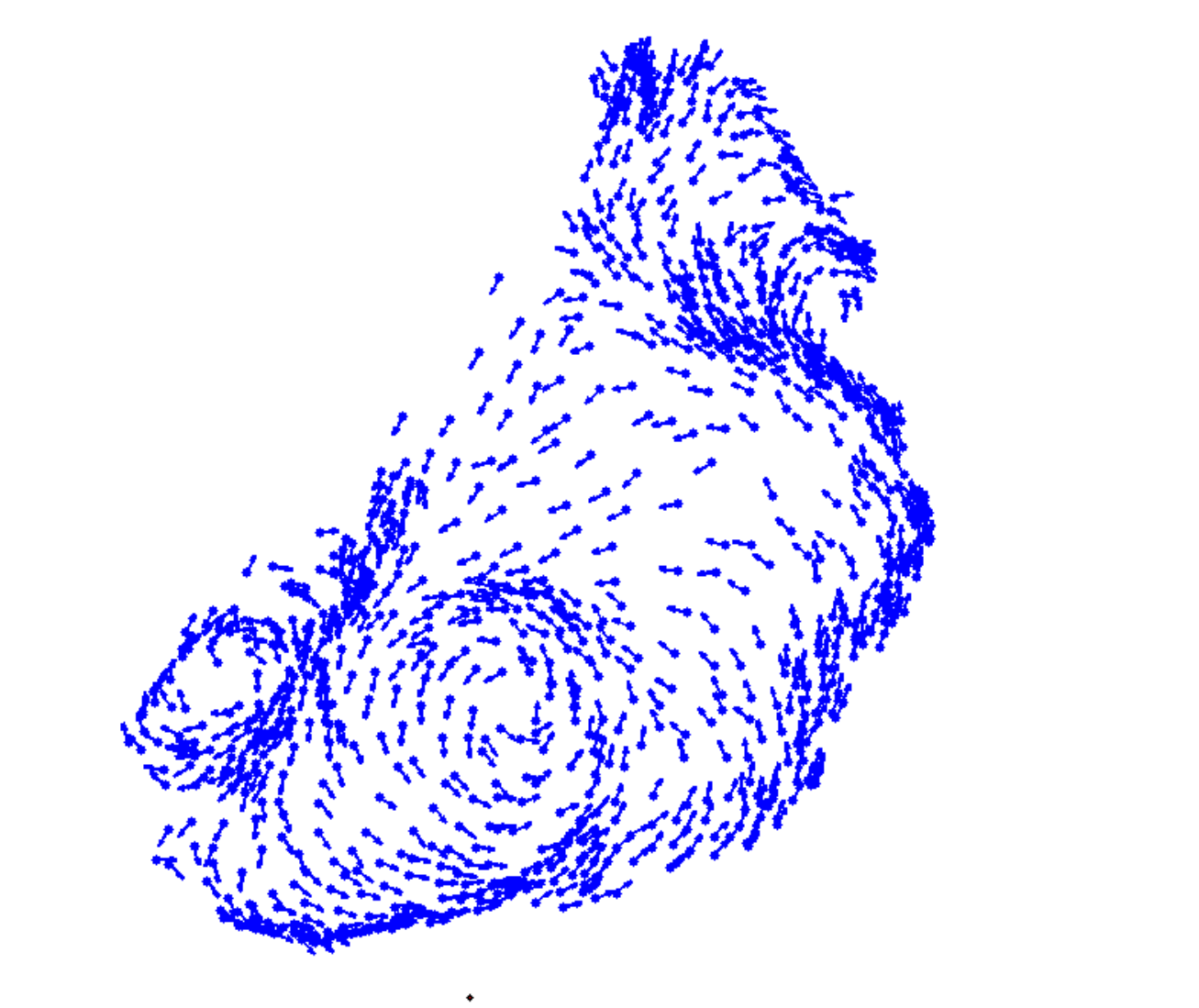}
\caption{On the left: a typical evolution of the Hegselmann-Krause model. On the right: mill patterns in the Vicsek model. (Kind courtesy of G. Albi)}
\end{figure}

In \cite{CS}, the authors proposed a possible extension of system \eqref{eq:vicsek1}  to dimensions $d > 2$ as follows
\begin{align*}
\left\{
\begin{aligned}
\begin{split}
\dot{x}_{i}(t) & = v_{i}(t), \\
\dot{v}_{i}(t) & = \frac{1}{|\Lambda_R(t,i)|} \sum_{j = 1}^N \chi_{[0,R]}\left(\|x_i(t) - x_j(t)\|\right)\left(v_{j}(t)-v_{i}(t)\right),
\end{split}\qquad i = 1, \ldots, N.
\end{aligned}
\right.
\end{align*}
The substitution of the function $\chi_{[0,R]}$ with a strictly positive \textit{kernel} $a:\R_+\funarrow\R_+$ let us drop the highly irregular and nonsymmetric normalizing factor $|\Lambda_R(t,i)|$ in favor of a simple $N$, and leads to the system
\begin{align}
\left\{
\begin{aligned}
\begin{split} \label{eq:cuckersmale}
\dot{x}_{i}(t) & = v_{i}(t), \\
\dot{v}_{i}(t) & = \frac{1}{N} \sum_{j = 1}^N a\left(\|x_i(t) - x_j(t)\|\right)\left(v_{j}(t)-v_{i}(t)\right),
\end{split}\qquad i = 1, \ldots, N.
\end{aligned}
\right.
\end{align}
Notice that the equation governing the evolution of $v_i$ has the same form as \eqref{eq:graphdyn}, and since now the weights $g_{ij}$ are symmetric (i.e., $g_{ij} = g_{ji}$ for all $i,j = 1, \ldots, N$) then $\overline{v}$ is a conserved quantity.

An example of a system of the form \eqref{eq:cuckersmale} is the influential model of Cucker and Smale, introduced in \cite{CS}, in which the function $a$ is
\begin{align}\label{eq:cuckerkernel}
a(r) \define \frac{H}{(\sigma^2 + r^2)^{\beta}},
\end{align}
where $H > 0$, $\sigma > 0$, and $\beta \geq 0$ are constants accounting for the social properties of the group. Systems like \eqref{eq:cuckersmale} are usually referred to as \textit{Cucker-Smale systems} due to the influence of their work, as can be witnessed by the wealth of literature focusing on their model, see for instance \cite{ahn2010stochastic,carrillo2010asymptotic,dalmao2011cucker,ha2009simple,perea2009extension,shen2007cucker}.


\subsection{Pattern formation for the Cucker-Smale model}

We now focus on consensus emergence for system \eqref{eq:cuckersmale}. In the following, we shall consider a kernel $a:\R_+\funarrow\R_+$ which is decreasing, strictly positive, bounded and Lipschitz continuous.

As already noticed, in second-order models alignment means that all agents move with the same velocity, but \textit{not necessarily} are in the same position. Therefore, Definition \ref{def:consensusV} of consensus applies here on the $v_i$ variables only. 
\begin{definition}[Consensus for system \eqref{eq:cuckersmale}] \label{def:consensus}
We say that a solution
\begin{align*}
(x,v) = (x_1,\ldots,x_N, v_1, \ldots, v_N):\R_+ \rightarrow \R^{2dN}
\end{align*}
of system \eqref{eq:cuckersmale} \textit{tends to consensus} if the consensus parameter vectors $v_i$ tend to the mean $\overline{v}$, i.e.,
\begin{align*}
\lim_{t \rightarrow + \infty} \vnorm{v_i(t) - \overline{v}(t)} = 0 \quad \text{for every } i = 1,\ldots,N.
\end{align*}
\end{definition}

\begin{figure}[!htb]
\centering
\includegraphics[width = 0.49\textwidth]{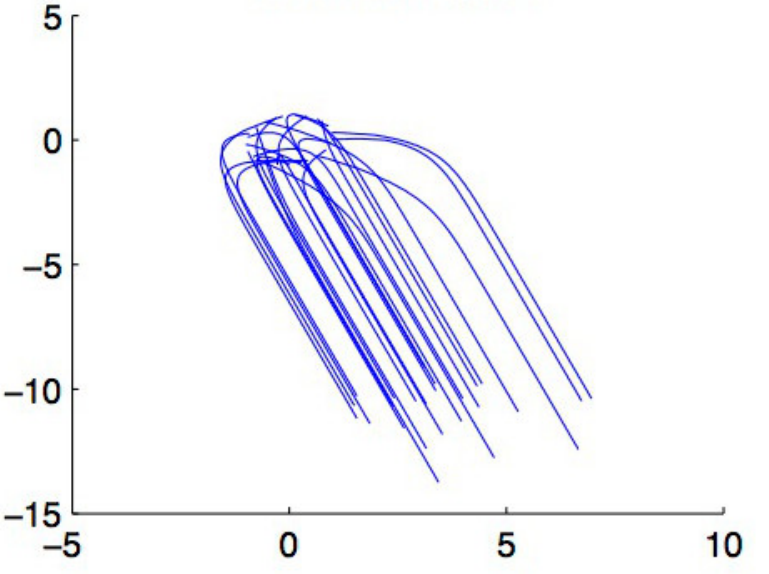}
\includegraphics[width = 0.49\textwidth]{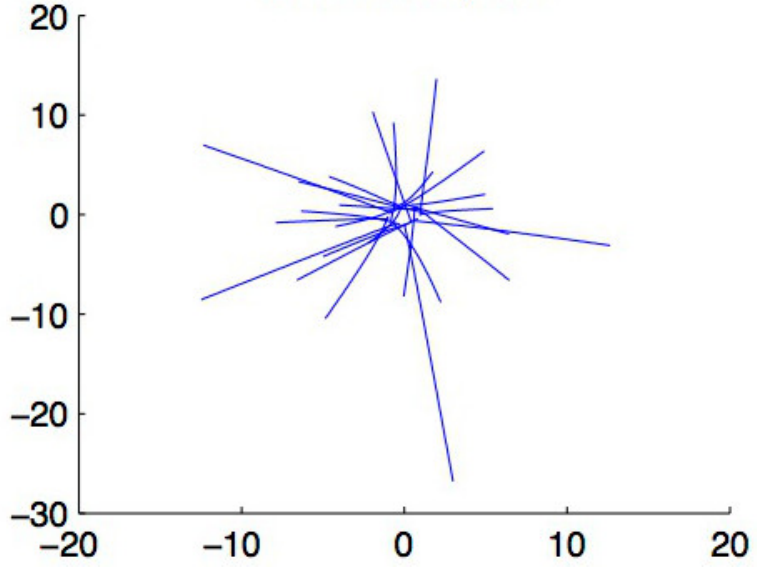}
\caption{Consensus behavior of a Cucker-Smale system. On the left: agents align with the mean velocity. On the right: agents fail to reach consensus.}
\end{figure}

The following result is an easy corollary of Theorem \ref{th:convexhull}.

\begin{corollary}\label{cor:unifbound}
Let $(x(\cdot),v(\cdot))$ be a solution of system \eqref{eq:cuckersmale}, where the interaction kernel $a$ is decreasing and strictly positive. Suppose that there exists $R > 0$ for which it holds
\begin{align*}
\lebesgue{1}{\set{t \geq 0}{\vnorm{x_i(t) - x_j(t)} \leq R \; \text{ for all } \; i,j = 1, \ldots, N}} = +\infty.
\end{align*}
Then $(x(\cdot),v(\cdot))$ converges to consensus.
\end{corollary}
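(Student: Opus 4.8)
The plan is to recognize the velocity dynamics of \eqref{eq:cuckersmale} as a particular instance of the abstract network \eqref{eq:graphdyn} and then to check that the integral condition on the pairwise distances forces the hypothesis of Theorem \ref{th:convexhull}. Fixing the solution $(x(\cdot),v(\cdot))$, the second equation of \eqref{eq:cuckersmale} is precisely \eqref{eq:graphdyn} with the time-dependent weights
\[
g_{ij}(t) \define \frac{1}{N}\, a(\|x_i(t) - x_j(t)\|).
\]
Since $a$ is continuous and $x(\cdot)$ is $C^1$, each $g_{ij}$ is continuous (hence piecewise continuous and nonnegative), so Theorem \ref{th:convexhull} is applicable to the $v$-component.

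First I would use the monotonicity and strict positivity of $a$ to obtain a uniform lower bound on the weights along the prescribed time set. On
\[
T_R \define \set{t \geq 0}{\|x_i(t) - x_j(t)\| \leq R \ \text{ for all } \ i,j = 1,\ldots,N}
\]
monotonicity yields $a(\|x_i(t) - x_j(t)\|) \geq a(R)$ for every pair, while strict positivity gives $a(R) > 0$. Choosing any threshold $\eps$ with $0 < \eps < a(R)/N$, we then have $g_{ij}(t) \geq a(R)/N > \eps$ for all $i,j$ and all $t \in T_R$, so that $G_{\eps}(t)$ coincides with the complete graph $G \define \{1,\ldots,N\}^2$ for every $t \in T_R$. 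In particular $T_R \subseteq \set{t \geq 0}{G_{\eps}(t) = G}$, and since by assumption $\lebesgue{1}{T_R} = +\infty$, also $\lebesgue{1}{\set{t \geq 0}{G_{\eps}(t) = G}} = +\infty$. The complete graph is trivially strongly connected, so every hypothesis of Theorem \ref{th:convexhull} holds, and we conclude that there exists $v^{\infty} \in \R^d$ with $\lim_{t \to +\infty} \|v_i(t) - v^{\infty}\| = 0$ for every $i$.

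Finally I would reconcile this statement with Definition \ref{def:consensus}, which is the only genuinely model-specific point. Because the weights are symmetric, $g_{ij}(t) = g_{ji}(t)$, the mean $\overline{v}$ defined in \eqref{eq:meanvel} is conserved, so $\overline{v}(t) = \overline{v}(0)$ for all $t \geq 0$. On the other hand, averaging the convergence $v_i(t) \to v^{\infty}$ over $i$ gives $\overline{v}(t) \to v^{\infty}$, whence $v^{\infty} = \overline{v}(0) = \overline{v}(t)$. Therefore $\|v_i(t) - \overline{v}(t)\| = \|v_i(t) - v^{\infty}\| \to 0$ for every $i$, which is exactly consensus in the sense of Definition \ref{def:consensus}.

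As the word ``corollary'' suggests, there is no serious obstacle here. The only step demanding care is the last one: Theorem \ref{th:convexhull} merely delivers convergence to \emph{some} point of the convex hull of the initial velocities (Definition \ref{def:consensusV}), and one must separately invoke the symmetry-induced conservation of $\overline{v}$ to identify this limit with the mean, thereby upgrading the conclusion to the velocity-alignment notion of Definition \ref{def:consensus}.
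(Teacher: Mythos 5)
Your proof is correct and follows essentially the same route as the paper: bound the weights $g_{ij}(t) = a(\|x_i(t)-x_j(t)\|)/N$ from below by $a(R)/N$ on the given time set, observe that the complete graph is strongly connected, and apply Theorem \ref{th:convexhull}. Your two refinements --- taking $\eps$ strictly below $a(R)/N$ so that the strict inequality in the definition of $G_\eps(t)$ is satisfied, and explicitly identifying $v^\infty$ with the conserved mean $\overline v$ to match Definition \ref{def:consensus} --- are both welcome points of care that the paper glosses over (the latter is implicit in the remark following \eqref{eq:weight}).
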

\begin{proof}
Since $a$ is decreasing and strictly positive, from the initial assumptions follows
\begin{align*}
g_{ij}(t) = \frac{1}{N}a(\vnorm{x_i(t) - x_j(t)}) \geq \frac{a(R)}{N} > 0,
\end{align*}
for every $t \geq 0$ for which $\vnorm{x_i(t) - x_j(t)} \leq R$ holds for every $i,j = 1, \ldots, N$. Therefore, the condition $\vnorm{x_i(t) - x_j(t)} \leq R$ for every $i,j = 1, \ldots, N$ implies
$G_{a(R)/N}(t) =  \{1, \ldots, N\}^2,$
which yields
\begin{align*}
\lebesgue{1}{\set{t \geq 0}{G_{a(R)/N}(t) =  \{1, \ldots, N\}^2}} = +\infty,
\end{align*}
The statement then follows from Theorem \ref{th:convexhull} for the choice $\eps = a(R)/N$.
\end{proof}

Unfortunately, the result above has the serious flaw that it cannot be invoked directly to infer convergence to consensus, since establishing a uniform bound in time for the distances of the agents is very difficult, even for smooth kernels like \eqref{eq:cuckerkernel}. Intuitively, consider the case where the interaction strength is too weak and the agents too dispersed in space to let the velocities $v_i$ align. In this case, nothing prevents the distances $\|x_i - x_j\|$ to grow indefinitely, violating the hypothesis of Corollary \ref{cor:unifbound}. Hence, in order to obtain more satisfactory consensus results, we need to follow approaches that take into account the extra information at our disposal, which are the strength of the interaction and the initial configuration of the system.

Originally, this problem was studied in \cite{CS,cusm07} borrowing several tools from Spectral Graph Theory, see as a reference \cite{chung1997spectral}. Indeed, system \eqref{eq:cuckersmale} can be rewritten in the following compact form
\begin{align}\label{systemmatrix}
\left\{
\begin{aligned}
\dot{x}(t) & = v(t), \\
\dot{v}(t) & = L(x(t))v(t),
\end{aligned}\right.
\end{align}
where $L(x(t))$ is the Laplacian\footnote{Given a real $N\times N$ matrix $A = (a_{ij})_{i,j = 1}^N$ and $v\in \R^{dN}$ we denote by $Av$ the action of $A$ on $\R^{dN}$  by mapping $v$ to $(a_{i1}v_{1} + \cdots + a_{iN}v_{N})_{i=1}^N$. Given a nonnegative symmetric $N \times N$ matrix $A = (a_{ij})_{i,j = 1}^N$, the \textit{Laplacian} $L$ of $A$ is defined by $L = D - A$, with $D = \mathrm{diag} (d_{1}, \ldots, d_{N})$ and $d_{k} = \sum_{j=1}^{N} a_{kj}$.} of the matrix $(a\left(\vnorm{x_{i}(t)-x_{j}(t)}\right)/N)_{i,j=1}^N$, which is a function of $x(t)$. Being the Laplacian of a positive definite, symmetric matrix, $L(x(t))$ encodes plenty of information regarding the adjacency matrix $G_0(t)$ of the system, see \cite{mohar1991laplacian}. In particular, the second smallest eigenvalue $\lambda_2(t)$ of $L(x(t))$, called the \textit{Fiedler's number} of $G_0(t)$ is deeply linked with consensus emergence: provided that a sufficiently strong bound from below of $\lambda_2(t)$ is available, the system converges to consensus.

To establish under which conditions we have convergence to consensus, we shall follow a different approach. The advantage of it is that it can be employed also to study the issue of the controllability of several multiagent systems (see Section \ref{sec:cuckersparse}).

\subsection{The consensus region}

A natural strategy to improve Corollary \ref{cor:unifbound} would be to look for quantities which are invariant with respect to $\overline{v}$, since it is conserved in systems like \eqref{eq:cuckersmale}.

\begin{definition}
The symmetric bilinear form $B: \R^{dN} \times \R^{dN} \funarrow \R$ is defined, for any $v,w \in \R^{dN}$, as
\begin{align*}
B(v,w) \define \frac{1}{2N^2} \sum^N_{i = 1} \sum^N_{j = 1} (v_i - v_j)\cdot(w_i - w_j),
\end{align*}
where $\cdot$ denotes the usual scalar product on $\R^d$.
\end{definition}

\begin{remark}
It is trivial to prove that
\begin{align}\label{eq:otherB}
B(v,w) = \frac{1}{N} \sum^N_{i = 1} (v_i\cdot w_i) - \overline{v} \cdot \overline{w},
\end{align}
where $\overline{v}$ stands for the average of the elements of the vector $v = (v_1, \ldots, v_N)$ given by \eqref{eq:meanvel}. From this representation of $B$ follows easily that the two spaces
\begin{align*}
\cal{V}_f &\define \set{v \in \R^{dN}}{v_1 = \ldots = v_N} \quad \text{and} \quad \cal{V}_{\perp} \define \set{v \in \R^{dN}}{\sum^N_{i = 1} v_i = 0},
\end{align*}
are perpendicular with respect to the scalar product $B$, i.e., $\R^{dN} = \cal{V}_f \oplus \cal{V}_{\perp}$. This means that every $v \in \R^{dN}$ can be written uniquely as $v = v^f + v^{\perp}$, where $v^f \in \cal{V}_f$ and $v^{\perp} \in \cal{V}_{\perp}$. A closer inspection reveals that it holds
$v^f_i = \overline{v}$ and $v^{\perp}_i =  v_i - \overline{v}$ for every $i = 1, \ldots, N$.
Notice that, since $v^{\perp} \in \cal{V}_{\perp}$, for any vector $w \in \R^d$ it holds
\begin{align} \label{eq:vertequalzero}
\sum^N_{i = 1} (v^{\perp}_i \cdot w) = \left(\sum^N_{i = 1}v^{\perp}_i \right) \cdot w = 0.
\end{align}
\end{remark}

Since for every $v,w \in \R^{dN}$ we have $B(v^f,w) = 0 = B(v,w^f)$, it holds
\begin{align*}
B(v,w) = B(v^{\perp},w) = B(v,w^{\perp}) = B(v^{\perp},w^{\perp}).
\end{align*}
This means that $B$ distinguishes two vectors modulo their projection on $\cal{V}_f$. Moreover, from \eqref{eq:otherB} immediately follows that $B$ restricted to $\cal{V}_{\perp} \times \cal{V}_{\perp}$ coincides, up to a factor $1/N$, with the usual scalar product on $\R^{dN}$.

\begin{remark}[Consensus manifold]
Notice that whenever the initial datum $(x^0,v^0)$ belongs to the set $\R^{dN}\times\cal{V}_f$, the right-hand size of $\dot{v}_i$ in \eqref{eq:cuckersmale} is 0, hence the equality $v_1(t) = \ldots = v_N(t)$ is satisfied for all $t\geq 0$ and the system is already in consensus. For this reason, the set $\R^{dN}\times\cal{V}_f$ is called the \textit{consensus manifold}.
\end{remark}

The bilinear form $B$ can be used to characterize consensus emergence for solutions $(x(\cdot),v(\cdot))$ of system \eqref{eq:cuckersmale} by setting
\begin{align*}
X(t) \define B(x(t), x(t)) \quad \text{ and } \quad V(t) \define B(v(t),v(t)).
\end{align*}
The functionals $X$ and $V$ provide a description of consensus by measuring the spread, both in positions and velocities, of the trajectories of the solution $(x(\cdot),v(\cdot))$, as the following trivial result shows. 

\begin{proposition}\label{p:equivconsensus}
The following statements are equivalent:
\begin{enumerate}
\item $\lim_{t \rightarrow + \infty} \vnorm{v_i(t) - \overline{v}(t)} = 0$ for every $i = 1,\ldots,N$;
\item $\lim_{t \rightarrow + \infty} v_i^{\perp}(t) = 0$ for every $i = 1,\ldots,N$;
\item $\lim_{t \rightarrow + \infty} V(t) = 0$.
\end{enumerate}
\end{proposition}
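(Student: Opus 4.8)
The whole statement collapses onto a single identity, so the plan is to prove that identity first and then read off the three equivalences. Specifically, I would show
\begin{align*}
V(t) = B(v(t),v(t)) = \frac{1}{N}\sum_{i=1}^N \vnorm{v_i^\perp(t)}^2 .
\end{align*}
This follows directly from the properties of $B$ collected above: since $B(v,w) = B(v^\perp,w^\perp)$ for all $v,w \in \R^{dN}$, and since $B$ restricted to $\cal{V}_\perp \times \cal{V}_\perp$ equals $1/N$ times the Euclidean scalar product, one obtains $V(t) = B(v^\perp(t),v^\perp(t)) = \frac{1}{N}\sum_{i=1}^N v_i^\perp(t)\cdot v_i^\perp(t)$.

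The equivalence $(1)\Leftrightarrow(2)$ then requires nothing more than the explicit decomposition recorded in the Remark, namely $v_i^\perp(t) = v_i(t) - \overline{v}(t)$: under this identification the two assertions $\vnorm{v_i(t)-\overline{v}(t)}\to 0$ and $v_i^\perp(t)\to 0$ are literally the same condition. For $(2)\Leftrightarrow(3)$ I would invoke the identity above: its right-hand side is a finite sum of nonnegative terms, so $V(t)\to 0$ forces each $\vnorm{v_i^\perp(t)}^2\to 0$ because every summand is dominated by the total, $\vnorm{v_i^\perp(t)}^2 \leq N\,V(t)$; conversely, if all $v_i^\perp(t)\to 0$ then the finite sum tends to $0$ as well. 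This closes the cycle $(1)\Leftrightarrow(2)\Leftrightarrow(3)$.

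I expect no genuine obstacle here — the result is flagged as trivial precisely because the bilinear-form machinery of the preceding subsection has already absorbed all the content. The only points deserving a moment's care are using the correct normalization (the factor $1/N$ relating $B$ to the Euclidean product on $\cal{V}_\perp$) and justifying the passage between ``the sum tends to zero'' and ``each term tends to zero'' by nonnegativity of the summands rather than leaving it implicit; both are routine in the finite-dimensional setting.
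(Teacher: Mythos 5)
Your proof is correct and uses exactly the machinery the paper sets up for this purpose: the paper itself offers no written proof (it flags the result as trivial), but the intended argument is precisely the identity $V(t)=B(v^{\perp}(t),v^{\perp}(t))=\frac{1}{N}\sum_{i=1}^N\vnorm{v_i^{\perp}(t)}^2$ together with $v_i^{\perp}=v_i-\overline{v}$, which is what you write down. The one point you rightly make explicit --- that a finite sum of nonnegative terms tends to zero iff each term does --- is all that is needed to close the equivalences.
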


The following Lemma shows that $V$ is a Lyapunov functional for system \eqref{eq:cuckersmale}.

\begin{lemma}[{\cite[Lemma 1]{caponigro2015sparse}}]\label{cor:trivialVdecay}
Let $(x(\cdot), v(\cdot))$ be a solution of system \eqref{eq:cuckersmale}. Then for every $t \geq 0$ it holds
\begin{align}\label{eq:Vdecayvanilla}
\frac{d}{dt}V(t) \leq -2 a(\sqrt{2NX(t)})V(t).
\end{align}
Therefore, $V$ is decreasing. 
\end{lemma}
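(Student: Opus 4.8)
The plan is to derive a linear differential inequality for $V$ by differentiating it along the flow of \eqref{eq:cuckersmale} and then exploiting the symmetry of the interaction weights. First I would abbreviate $a_{ij}(t) \define a(\vnorm{x_i(t) - x_j(t)})$, note that $a_{ij} = a_{ji}$, and recall that the symmetry of the weights makes $\overline{v}$ a conserved quantity, so that $\overline{\dot v} = 0$. Using the representation \eqref{eq:otherB} together with $\overline{\dot v} = 0$, the derivative of $V(t) = B(v(t),v(t))$ reduces, after inserting the right-hand side of \eqref{eq:cuckersmale}, to
\begin{align*}
\frac{d}{dt}V(t) = 2B(v(t),\dot v(t)) = \frac{2}{N}\sum_{i=1}^N v_i(t)\cdot\dot v_i(t) = \frac{2}{N^2}\sum_{i,j=1}^N a_{ij}(t)\, v_i(t)\cdot\bigl(v_j(t) - v_i(t)\bigr).
\end{align*}

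The next step is the standard symmetrization. Relabelling $i \leftrightarrow j$ and averaging the two resulting expressions, the symmetry $a_{ij} = a_{ji}$ turns the summand $v_i\cdot(v_j - v_i)$ into a perfect square, giving
\begin{align*}
\sum_{i,j=1}^N a_{ij}\, v_i\cdot(v_j - v_i) = -\frac{1}{2}\sum_{i,j=1}^N a_{ij}\vnorm{v_i - v_j}^2,
\end{align*}
so that $\frac{d}{dt}V = -\tfrac{1}{N^2}\sum_{i,j}a_{ij}\vnorm{v_i - v_j}^2$. Since $a_{ij} \geq 0$, this expression is already $\leq 0$, which proves that $V$ is decreasing; what remains is to extract the quantitative rate with the correct kernel argument.

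The crucial — and most delicate — step is to bound every weight $a_{ij}$ from below by a single value $a(\sqrt{2NX})$. As $a$ is decreasing, $a_{ij} = a(\vnorm{x_i - x_j}) \geq a(\diam)$, where $\diam \define \max_{i,j}\vnorm{x_i - x_j}$, so the task is to control the diameter by $X$. Writing $x_i^\perp = x_i - \overline{x}$ and using the orthogonal decomposition $\R^{dN} = \cal{V}_f \oplus \cal{V}_\perp$, for which $NX = \sum_i \vnorm{x_i^\perp}^2$, I would estimate, for the pair $p,q$ realizing the diameter,
\begin{align*}
\diam^2 = \vnorm{x_p^\perp - x_q^\perp}^2 \leq 2\left(\vnorm{x_p^\perp}^2 + \vnorm{x_q^\perp}^2\right) \leq 2\sum_{i=1}^N\vnorm{x_i^\perp}^2 = 2NX,
\end{align*}
whence $\diam \leq \sqrt{2NX}$ and therefore $a_{ij} \geq a(\sqrt{2NX})$ for all $i,j$. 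Substituting this and using $\sum_{i,j}\vnorm{v_i - v_j}^2 = 2N^2 V$ yields
\begin{align*}
\frac{d}{dt}V(t) \leq -\frac{a(\sqrt{2NX(t)})}{N^2}\sum_{i,j=1}^N\vnorm{v_i(t) - v_j(t)}^2 = -2\,a\bigl(\sqrt{2NX(t)}\bigr)\,V(t),
\end{align*}
which is exactly \eqref{eq:Vdecayvanilla}. I expect the main obstacle to be getting the constant right in the diameter bound: the naive estimate $2\diam^2 \leq \sum_{i,j}\vnorm{x_i - x_j}^2 = 2N^2 X$ only gives the far weaker $\diam \leq N\sqrt{X}$, and the sharp argument $\sqrt{2NX}$ genuinely requires routing through the perpendicular components $x_i^\perp$ and the Pythagorean bound $\vnorm{x_p^\perp}^2 + \vnorm{x_q^\perp}^2 \leq NX$.
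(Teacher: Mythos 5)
Your proof is correct and follows essentially the same route as the paper, which only cites \cite[Lemma~1]{caponigro2015sparse} and sketches the argument for the generalization in Lemma~\ref{lem:bigv_growth} (differentiate $V$, insert the dynamics, use that $a$ is nonincreasing); you have simply made explicit the symmetrization $\sum_{i,j}a_{ij}v_i\cdot(v_j-v_i)=-\tfrac12\sum_{i,j}a_{ij}\|v_i-v_j\|^2$ and the diameter bound $\|x_i-x_j\|^2=\|x_i^{\perp}-x_j^{\perp}\|^2\leq 2\sum_k\|x_k^{\perp}\|^2=2NX$ that the paper leaves implicit. All steps and constants check out.
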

%

By means of the quantities $X$ and $V$ we can provide a sufficient condition for consensus emergence for solutions of system \eqref{eq:cuckersmale}.

\begin{theorem}[{\cite[Theorem 3.1]{HaHaKim}}]\label{thm:hhk}
Let $(x^0,v^0) \in \R^{dN} \times \R^{dN}$ and set $X_0 \define B(x^0, x^0)$ and $V_0 \define B(v^0, v^0)$. If the following inequality is satisfied
\begin{align}
\label{eq:HaHaKim}
\int^{+\infty}_{\sqrt{X_0}} a(\sqrt{2N} r) \ dr \geq \sqrt{V_0},
\end{align}
then the solution of \eqref{eq:cuckersmale} with initial datum $(x^0,v^0)$ tends to consensus.
\end{theorem}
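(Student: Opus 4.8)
The plan is to combine the Lyapunov inequality of Lemma \ref{cor:trivialVdecay} with a differential control on the positional spread $X$, and to extract from \eqref{eq:HaHaKim} a uniform-in-time bound on $X(t)$ that keeps the interaction kernel bounded away from zero. First I would record the auxiliary estimate for $X$. Since $\dot x = v$, differentiating gives $\frac{d}{dt}X(t) = 2B(x(t),v(t))$, and since $B$ is positive semidefinite the Cauchy--Schwarz inequality yields $\big|\frac{d}{dt}X(t)\big| \leq 2\sqrt{X(t)}\sqrt{V(t)}$, hence $\big|\frac{d}{dt}\sqrt{X(t)}\big| \leq \sqrt{V(t)}$. (At instants where $X(t)=0$ the agents share a common position; this degenerate case is handled by a standard regularization, e.g.\ working with $\sqrt{X+\eps}$ and letting $\eps \to 0$, which I would flag but not dwell on.) Writing $u=\sqrt X$ and $\phi=\sqrt V$, Lemma \ref{cor:trivialVdecay} becomes $\dot\phi \leq -a(\sqrt{2N}\,u)\,\phi$, while the above gives $-\phi \leq \dot u \leq \phi$.

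The heart of the argument is to show that the functional
\[
\Gamma(t) \define \sqrt{V(t)} + \int_{\sqrt{X_0}}^{\sqrt{X(t)}} a(\sqrt{2N}\,r)\,dr
\]
is non-increasing. I would split on the sign of $\dot u$. When $\dot u \geq 0$ I use $\phi \geq \dot u$ together with $\dot\phi \leq -a(\sqrt{2N}u)\phi$ to get $\dot\phi \leq -a(\sqrt{2N}u)\,\dot u = -\frac{d}{dt}\int_{\sqrt{X_0}}^{u}a(\sqrt{2N}r)\,dr$, so $\dot\Gamma \leq 0$; when $\dot u < 0$ both summands of $\Gamma$ are individually decreasing (the integral because its integrand is positive and $u$ is decreasing, and $\phi$ because $\dot\phi \leq 0$), so again $\dot\Gamma \leq 0$. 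Integrating from $0$ gives $\Gamma(t) \leq \Gamma(0) = \sqrt{V_0}$, and dropping the nonnegative term $\sqrt{V(t)}$ leaves
\[
\int_{\sqrt{X_0}}^{\sqrt{X(t)}} a(\sqrt{2N}\,r)\,dr \leq \sqrt{V_0} \qquad \text{for all } t\geq 0.
\]

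This is where hypothesis \eqref{eq:HaHaKim} enters: because $a$ is strictly positive, $s \mapsto \int_{\sqrt{X_0}}^{s} a(\sqrt{2N}r)\,dr$ is strictly increasing, and by \eqref{eq:HaHaKim} there is a finite $X^\ast$ with $\int_{\sqrt{X_0}}^{X^\ast} a(\sqrt{2N}r)\,dr = \sqrt{V_0}$. The previous display then forces $\sqrt{X(t)} \leq X^\ast$ for all $t$, i.e.\ $X$ stays uniformly bounded. Consequently $\sqrt{2N X(t)} \leq \sqrt{2N}\,X^\ast =: R_\ast$, and since $a$ is decreasing, $a(\sqrt{2NX(t)}) \geq a(R_\ast) > 0$ for every $t$. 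Feeding this back into \eqref{eq:Vdecayvanilla} gives $\frac{d}{dt}V(t) \leq -2a(R_\ast)V(t)$, so Gr\"onwall yields $V(t) \leq V_0\, e^{-2a(R_\ast)t} \to 0$, and Proposition \ref{p:equivconsensus} converts this into consensus. The main obstacle is precisely the uniform bound on $X$: without it the kernel could degenerate ($a(\sqrt{2NX})\to 0$) and the Lyapunov inequality would give no effective decay --- the role of \eqref{eq:HaHaKim} and of the monotone functional $\Gamma$ is exactly to rule this out.
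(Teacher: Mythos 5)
The paper itself gives no proof of this statement; it is imported verbatim from the cited reference, and your argument is essentially the standard one used there: the monotone functional $\sqrt{V(t)}+\int_{\sqrt{X_0}}^{\sqrt{X(t)}}a(\sqrt{2N}\,r)\,dr$, combined with the estimate $\lvert\frac{d}{dt}\sqrt{X(t)}\rvert\le\sqrt{V(t)}$ and Lemma \ref{cor:trivialVdecay}, yields a uniform-in-time bound on $X$, which keeps the kernel bounded away from zero and lets Gr\"onwall plus Proposition \ref{p:equivconsensus} finish. The case analysis on the sign of $\frac{d}{dt}\sqrt{X}$ and the final Gr\"onwall step are all correct.

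The one loose end is the borderline case where \eqref{eq:HaHaKim} holds with equality. Since $a>0$ everywhere, $\int_{\sqrt{X_0}}^{s}a(\sqrt{2N}r)\,dr<\sqrt{V_0}$ for every finite $s$ in that case, so the finite root $X^{\ast}$ you invoke does not exist and the direct bound $\sqrt{X(t)}\le X^{\ast}$ breaks down. The patch is to keep the term you discarded: $\Gamma(t)\le\Gamma(0)$ actually gives $\int_{\sqrt{X_0}}^{\sqrt{X(t)}}a(\sqrt{2N}r)\,dr\le\sqrt{V_0}-\sqrt{V(t)}$. If $V$ did not tend to $0$, then, $V$ being non-increasing by Lemma \ref{cor:trivialVdecay}, one would have $\sqrt{V(t)}\ge c>0$ for all $t$; the right-hand side would then be at most $\sqrt{V_0}-c$, a level that \emph{is} attained at some finite radius, so $X$ would be uniformly bounded after all and your Gr\"onwall step would force $V\to0$, a contradiction. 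With that two-line addendum the proof is complete.
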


The inequality \eqref{eq:HaHaKim} defines a region in the space $(X_0,V_0)$ of initial conditions for which the balance between $X_0$, $V_0$ and the kernel $a$ is such that the system tends to consensus autonomously.



\begin{definition}[Consensus region] We call \textit{consensus region} the set of points $(X_0,V_0)\in\R^{dN} \times \R^{dN}$ satisfying \eqref{eq:HaHaKim}.
\end{definition}

The size of the consensus region gives an estimate of how large the basin of attraction of the consensus manifold $\R^{dN} \times \cal{V}_f$ is. If the rate of communication function $a$ is integrable, i.e., far distant agents are only weakly influencing the dynamics, then such a region is essentially bounded, and actually not all initial conditions will realize self-organization, as the following example shows.

\begin{example}[{\cite[Proposition 5]{CS}}]\label{ex:notsharp}
Consider $N = 2$ agents in dimension $d = 1$ subject to system \eqref{eq:cuckersmale} with interaction kernel given by \eqref{eq:cuckerkernel} with $H = 1/2$, $\sigma = 1$, and $\beta = 1$. If we denote by $(x_1(\cdot), v_1(\cdot))$ and $(x_2(\cdot), v_2(\cdot))$ the trajectories of the two agents, it is easy to show that the evolution of the relative main state $x(t) \define x_1(t) - x_2(t)$ and of the relative consensus state $v(t) \define v_1(t) - v_2(t)$ is given for every $t \geq 0$ by
\begin{align}\label{eq:cuckersharp}
\left\{
\begin{aligned}
\begin{split}
\dot{x}(t) & = v(t),  \\
\dot{v}(t)  & = -\frac{v(t) }{1+x(t) ^2},
\end{split}
\end{aligned}
\right.
\end{align}
with initial condition $x(0) = x^0$ and $v(0) = v^0$ (without loss of generality, we may assume that $x^0, v^0 > 0$). An explicit solution of the above system can be easily derived by means of direct integration:
\begin{align*}
v(t) - v^0 = -\arctan x(t) + \arctan x^0.
\end{align*}
Condition \eqref{eq:HaHaKim} in this case reads $\pi/2 - \arctan x^0 \geq v^0.$ Hence, suppose \eqref{eq:HaHaKim} is violated, i.e., $\arctan x^0 + v^0 > \pi/2$. This means $\arctan x^0 + v^0 \geq \pi/2 + \eps$ for some $\eps > 0$, which implies
\begin{align*}
|v(t)| = |-\arctan x(t) + \arctan x^0 + v^0| \geq \left|-\arctan x(t) + \frac{\pi}{2} + \eps\right| > \eps
\end{align*}
for every $t \geq 0$. Therefore, the solution of system \eqref{eq:cuckersharp} with initial datum $(x^0,v^0)$ satisfying $\arctan x^0 + v^0 > \pi/2$ does not converge to consensus, since otherwise we would have $v(t) \rightarrow 0$ for $t \rightarrow +\infty$.
\end{example}

\begin{remark}\label{rem:ainL1}
Notice that, if  $\int_{\delta}^{+\infty} a(r) dr$ diverges for every $\delta \geq 0$, then the consensus region coincides with the entire space $\R^{dN} \times \R^{dN}$. In other words, in this case the interaction force between the agents is so strong that the system will reach consensus no matter what the initial conditions are.
\end{remark}

As the following example shows, there may be initial configurations from which the system can reach consensus automatically even if condition \eqref{eq:HaHaKim} is not satisfied.

\begin{example}\label{ex:autonomous}
Consider an instance of the Cucker-Smale system \eqref{eq:cuckersmale} without control in dimension $d=1$ with $N=2$ agents, where the interaction function $a:\R_+\funarrow\R_+$ is of the form
\begin{align*}
a(r) = \begin{cases}
M & \text{ if } r\leq R,\\
f(r) & \text{ if } r \geq R,
\end{cases}
\end{align*}
for some given $R > 0$ and $f:\R_+\funarrow\R_+$ positive continuous function satisfying
$$f(R)=M \quad \text{ and } \quad \int^{+\infty}_R f(r) \, dr = \eps < + \infty.$$
The constant $M>0$ is to be properly chosen later on. Assume that the initial state and consensus parameters of the two agents are $(x^0_1,v^0_1) = (-R/2,v^0)$ and $(x^0_2,v^0_2) = (R/2,-v^0)$ respectively, for some $v^0 > \eps/2$.

Due to the nature of the situation, is fairly easy to check if condition \eqref{eq:HaHaKim} of Theorem \ref{thm:hhk} is satisfied or not. Indeed we have $X(0) = R^2/4$ and $V(0) = (v^0)^2$, and, by the particular form of $a$, after a change of variables the computation below follows
\begin{align*}
\int^{+\infty}_{\frac{R}{2}} a(2r) \,dr = \frac{1}{2}\int^{+\infty}_{R} a(r) \,dr = \frac{1}{2}\int^{+\infty}_{R} f(r) \,dr = \frac{\eps}{2}.
\end{align*}
Therefore at time $t = 0$ we are not in the consensus region given by \eqref{eq:HaHaKim}, since
\begin{align*}
\int^{+\infty}_{\sqrt{X(0)}} a(\sqrt{4}r) \,dr  = \frac{\eps}{2} < v^0 = \sqrt{V(0)}.
\end{align*}
We now show that there exists a time $T >0$ such that
\begin{equation}\label{eq:entracazzo}
\int^{+\infty}_{\sqrt{X(T)}} a(\sqrt{4}r) \,dr  \geq \sqrt{V(T)},
\end{equation}
i.e., the system enters the consensus region autonomously at time $T$.

To do so, we first compute a lower bound for the integral. Notice that, since we are considering a Cucker-Smale system with mean consensus parameter $\overline{v} = 0$, the speeds $|v_1(t)|$ and $|v_2(t)|$ are decreasing by Lemma \ref{cor:trivialVdecay}. Therefore, we can estimate from above the time until $|x_1(t)-x_2(t)| \leq R$ holds by $T^* \define R/2v^0$ (since the agents are moving on the real line in opposite directions). Hence $X(t)\leq X(0) = R^2/4$ for every $t\in[0,T^*]$, which yields the following lower bound
\begin{equation*}
\int^{+\infty}_{\sqrt{X(t)}} a(\sqrt{4}r) \,dr \geq \int^{+\infty}_{\sqrt{X(0)}} a(\sqrt{4}r) \,dr = \frac{\eps}{2}
\end{equation*}
valid for any $t \leq T^*$.

We now compute an upper bound for the functional $\sqrt{V(t)}$ for $t\in [0,T^*]$. Notice that
$$a(\sqrt{4X(t)}) \geq a(\sqrt{4X(0)}) = a(R) = M,$$
hence by \eqref{eq:Vdecayvanilla} we have
$$\frac{d}{dt}V(t) \leq - 2MV(t)$$
which, by integration, implies that $\sqrt{V(t)}\leq v^0e^{-Mt}$ for every $t\in [0,T^*]$.

We now plug together the two bounds. In order for \eqref{eq:entracazzo} to hold at some time $T < T^*$, simply choose
$$M = M_T \define \frac{1}{T}\log\left(\frac{2v^0}{\eps}\right).$$
For this choice of $M$, it follows
$$\int^{+\infty}_{\sqrt{X(T)}} a(\sqrt{4}r) \, dr \geq \frac{\eps}{2} = v^0e^{-M_T T} \geq \sqrt{V(T)}.$$
From Theorem \ref{thm:hhk} we can then conclude that any solution of the above system tends autonomously to consensus.
\end{example}

\section{The effect of perturbations on consensus emergence} \label{sec:first_results}

An immediate way to enhance the alignment capabilities of systems like \eqref{eq:cuckersmale} consists in adding a feedback term penalizing the distance of each agent's velocity from the average one, i.e.,
\begin{align}
\left\{
\begin{aligned}
\begin{split} \label{eq:cuckersmale_uniform}
\dot{x}_{i}(t) & = v_{i}(t), \\
\dot{v}_{i}(t) & = \frac{1}{N} \sum_{j = 1}^N a\left(\vnorm{x_i(t) - x_j(t)}\right)\left(v_{j}(t)-v_{i}(t)\right) + \gamma(\overline{v}(t) - v_i(t)),
\end{split}
\end{aligned}
\right.
\end{align}
where $\gamma > 0$ is a prescribed constant, modeling the strength of the additional alignment term.

This approach to the enforcement of consensus is a particular instance of what in the literature is known as \textit{decentralized control strategy}, which 
has been thoroughly studied especially for its application in the self-organization of \textit{unmanned aerial vehicles} (UAVs) \cite{fax2004information}, congestion control in communication networks \cite{paganini2001scalable}, and distributed sensor newtorks \cite{cortes2005coordination}. We also refer to \cite{tanner2007flocking} for the stability analysis of a decentralized coordination method for dynamical systems with switching underlying communication network.

As system \eqref{eq:cuckersmale_uniform} can be rewritten as \eqref{eq:cuckersmale} with the interaction kernel $a(\cdot) + \gamma$ replacing $a(\cdot)$, by Theorem \ref{thm:hhk} and Remark \ref{rem:ainL1} each solution of \eqref{eq:cuckersmale_uniform} tends to consensus.

However, the apparently innocent fix of adding the extra term above has actually a huge impact on the interpretation of the model: as pointed out in \cite{caponigro2015sparse}, this approach requires that each agent must possess at every instant a \textit{perfect information} of the whole system, since it has to correctly compute the mean velocity of the group $\overline{v}$ in order to compute its trajectory. This condition is seldom met in real-life situations, where it is usually only possible to ask that each agent computes an approximated mean velocity vector $\overline{v}_i$, instead of the true $\overline{v}$. These considerations lead us to the model
\begin{align}
\left\{
\begin{aligned}
\begin{split} \label{eq:cuckersmale_local}
\dot{x}_{i}(t) & = v_{i}(t), \\
\dot{v}_{i}(t) & = \frac{1}{N} \sum_{j = 1}^N a\left(\vnorm{x_i(t) - x_j(t)}\right)\left(v_{j}(t)-v_{i}(t)\right) + \gamma(\overline{v}_i(t) - v_i(t)).
\end{split}
\end{aligned}
\right.
\end{align}

In studying under which conditions the solutions of system \eqref{eq:cuckersmale_local} tend to consensus, it is often desirable to express the approximated feedback as a combination of a term consisting on a \textit{true information feedback}, i.e., a feedback based on the real average $\overline{v}$, and a perturbation term, which models the deviation of $\overline{v}_i$ from $\overline{v}$. To this end, we rewrite system \eqref{eq:cuckersmale_local} in the following form:
\begin{align}
\left\{
\begin{aligned}
\begin{split} \label{eq:cuckersmale_perturbed}
\dot{x}_{i}(t) & = v_{i}(t), \\
\dot{v}_{i}(t) & = \frac{1}{N} \sum_{j = 1}^N a\left(\vnorm{x_i(t) - x_j(t)}\right)\left(v_{j}(t)-v_{i}(t)\right) + \alpha(t)(\overline{v}(t) - v_i(t)) + \beta(t) \Delta_i(t),
\end{split}
\end{aligned}
\right.
\end{align}
where $\alpha(\cdot)$ and $\beta(\cdot)$ are two nonnegative, piecewise continuous functions, and $\Delta_i(\cdot)$ is the deviation acting on the estimate of $\overline{v}$ by agent $i$ (which can, of course, depend on $(x_1(t),\ldots,x_N(t), v_1(t),\ldots,v_N(t))$). Therefore, solutions in this context have to be understood in terms of weak solutions in the Carath\'eodory sense, see \cite{filipov}. 

\begin{remark}\label{rm:exuniqchap1}
In what follows, we will not be interested in the well-posedness of system \eqref{eq:cuckersmale_perturbed}, but rather in finding assumptions on the functions $a$, $\alpha$, $\beta$, and $\Delta_i$ for which we can guarantee its asymptotic convergence to consensus.
\end{remark}

System \eqref{eq:cuckersmale_perturbed} provides the advantage of encompassing all the previously introduced models, as can be readily seen:
\begin{itemize}
\item if $\alpha = \beta \equiv \gamma$ and $\Delta_i =  v_i - \overline{v}$, or $\alpha = \beta \equiv 0$, then we recover system \eqref{eq:cuckersmale},
\item the choices $\alpha \equiv \gamma$, $\Delta_i \equiv 0$ (or equivalently $\beta \equiv 0$) yield system \eqref{eq:cuckersmale_uniform},
\item if $\alpha = \beta \equiv \gamma$ and $\Delta_i = \overline{v}_i - \overline{v}$ we obtain system \eqref{eq:cuckersmale_local}.
\end{itemize}

The introduction of the perturbation term in system \eqref{eq:cuckersmale_perturbed} may deeply modify  the nature of the original model: for instance, an immediate consequence is that the mean velocity of the system is, in general, no longer a conserved quantity.

\begin{proposition} \label{prop:derivative_mean}
For system \eqref{eq:cuckersmale_perturbed}, with perturbations given by the vector-valued function $\Delta(\cdot) = (\Delta_1(\cdot), \ldots, \Delta_N(\cdot))$, for every $t \geq 0$ it holds
\begin{align*}
	\frac{d}{dt}\overline{v}(t) = \beta(t) \overline{\Delta}(t).
\end{align*}
\end{proposition}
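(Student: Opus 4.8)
The plan is to differentiate the average $\overline{v}$ directly and to exploit the symmetry of the interaction kernel to kill the interaction term. First I would differentiate the definition \eqref{eq:meanvel}, exchanging the time derivative with the finite sum (legitimate for a finite collection of absolutely continuous components), to obtain
\begin{align*}
\frac{d}{dt}\overline{v}(t) = \frac{1}{N}\sum_{i=1}^N \dot{v}_i(t).
\end{align*}
I would then substitute the right-hand side of the second equation of \eqref{eq:cuckersmale_perturbed}, splitting the result into the three natural contributions coming from the interaction term, the true-information feedback $\alpha(t)(\overline{v}(t)-v_i(t))$, and the perturbation $\beta(t)\Delta_i(t)$.

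The main step — and the only one carrying any content — is the vanishing of the doubly-indexed interaction sum
\begin{align*}
\frac{1}{N^2}\sum_{i=1}^N\sum_{j=1}^N a(\vnorm{x_i(t)-x_j(t)})(v_j(t)-v_i(t)).
\end{align*}
Here I would use that the coefficient $a(\vnorm{x_i-x_j})$ is symmetric under the exchange $i \leftrightarrow j$, whereas the factor $(v_j-v_i)$ is antisymmetric; relabelling the summation indices therefore shows that the sum equals its own negative, hence is zero. This is exactly the same cancellation that made $\overline{v}$ a conserved quantity for the unperturbed system \eqref{eq:cuckersmale}.

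For the remaining two terms the computation is immediate. Averaging the feedback term gives $\alpha(t)\bigl(\overline{v}(t) - \tfrac{1}{N}\sum_{i=1}^N v_i(t)\bigr) = \alpha(t)\bigl(\overline{v}(t) - \overline{v}(t)\bigr) = 0$, so this contribution disappears regardless of the value of $\alpha(t)$. Averaging the perturbation term gives $\beta(t)\,\tfrac{1}{N}\sum_{i=1}^N \Delta_i(t) = \beta(t)\,\overline{\Delta}(t)$ by the very definition of $\overline{\Delta}$. Collecting the three contributions yields $\frac{d}{dt}\overline{v}(t) = \beta(t)\,\overline{\Delta}(t)$, as claimed. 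I expect no genuine obstacle: the single point requiring care is that, under the Carath\'eodory notion of solution, the velocities are only absolutely continuous and \eqref{eq:cuckersmale_perturbed} holds for almost every $t$, so the identity is to be read in the almost-everywhere sense rather than pointwise.
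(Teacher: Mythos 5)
Your proof is correct and is exactly the intended argument: the paper omits the proof of this proposition, but the same cancellation (symmetry of $a$ against antisymmetry of $v_j-v_i$, plus the trivial averaging of the $\alpha$-term) is precisely the computation the paper already carries out in Section~2 to show conservation of $\overline{v}$ for symmetric weights. Your closing remark about the identity holding only almost everywhere for Carath\'eodory solutions is an appropriate refinement consistent with the paper's Remark~\ref{rm:exuniqchap1}.
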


\begin{remark} \label{rem:derivative_mean}
As we have already pointed out, it is possible to recover system \eqref{eq:cuckersmale} by setting $\Delta_i =  v_i - \overline{v}$, whereas we can recover system \eqref{eq:cuckersmale_uniform} for the choice $\Delta_i \equiv  0$. Note that in both cases we have $\overline{\Delta}(t) =  0$ for every $t\geq0$, therefore the mean velocity is conserved both in systems \eqref{eq:cuckersmale} and \eqref{eq:cuckersmale_uniform}.

We also highlight the fact that $\overline{v}$ is not conserved even in the case that for every $t \geq 0,$ and for every $i = 1, \ldots, N$ we have $\Delta_i(t) = w$, where $w \in \R^d\backslash\{0\}$, i.e., the case in which all agents make the same mistake in evaluating the mean velocity.
\end{remark}

\subsection{General results for consensus stabilization under perturbations}
%
%
The following is a generalization of Lemma \ref{cor:trivialVdecay} to systems like \eqref{eq:cuckersmale_perturbed}.

\begin{lemma}[{\cite[Lemma 3.1]{bongini2015conditional}}] \label{lem:bigv_growth}
Let $(x(\cdot), v(\cdot))$ be a solution of system \eqref{eq:cuckersmale_perturbed}. For every $t \geq 0$ it holds
\begin{align} \label{eq:maintool}
\frac{d}{dt} V(t) \leq - 2 a\left(\sqrt{2NX(t)}\right) V(t) -2 \alpha(t) V(t) + \frac{2 \beta(t)}{N} \sum^N_{i = 1} \Delta_i(t) \cdot v^{\perp}_i(t).
\end{align}
\end{lemma}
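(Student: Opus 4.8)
The plan is to differentiate $V(t) = B(v(t),v(t))$ directly and to exploit the splitting of $\dot v_i$ furnished by \eqref{eq:cuckersmale_perturbed} into an alignment part, an $\alpha$-part, and a $\beta$-part. By bilinearity and symmetry of $B$ we have $\frac{d}{dt}V(t) = 2B(v(t),\dot v(t))$ (at points of differentiability, which suffices in the Carath\'eodory setting). Combining \eqref{eq:otherB} with $v^{\perp}_i = v_i - \overline v$, one checks the convenient identity $B(v,w) = \frac{1}{N}\sum_{i=1}^N v^{\perp}_i\cdot w_i$. Applying it with $w = \dot v$ reduces the computation to
\begin{align*}
\frac{d}{dt}V(t) = \frac{2}{N}\sum_{i=1}^N v^{\perp}_i(t)\cdot\dot v_i(t),
\end{align*}
into which I would substitute the three summands of $\dot v_i$ and treat them one at a time.

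The two perturbation contributions are immediate. For the $\alpha$-term I would use $\overline{v}(t) - v_i(t) = -v^{\perp}_i(t)$, so that $\frac{2\alpha(t)}{N}\sum_i v^{\perp}_i\cdot(\overline v - v_i) = -\frac{2\alpha(t)}{N}\sum_i \vnorm{v^{\perp}_i}^2 = -2\alpha(t)V(t)$, recalling $V = \frac{1}{N}\sum_i\vnorm{v^{\perp}_i}^2$. The $\beta$-term needs no manipulation at all: it contributes exactly $\frac{2\beta(t)}{N}\sum_i \Delta_i(t)\cdot v^{\perp}_i(t)$, which is precisely the last summand in \eqref{eq:maintool}.

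The alignment term is the heart of the estimate and is exactly the quantity already controlled in Lemma \ref{cor:trivialVdecay}; the cleanest route is to reproduce that computation. Writing $v_j - v_i = v^{\perp}_j - v^{\perp}_i$ and symmetrizing in $i,j$ using $a(\vnorm{x_i-x_j}) = a(\vnorm{x_j-x_i})$, the alignment contribution becomes $-\frac{1}{N^2}\sum_{i,j} a(\vnorm{x_i-x_j})\,\vnorm{v^{\perp}_i - v^{\perp}_j}^2$. Since $a$ is nonincreasing, the remaining point is the geometric bound $\vnorm{x_i - x_j}^2 = \vnorm{x^{\perp}_i - x^{\perp}_j}^2 \leq 2\vnorm{x^{\perp}_i}^2 + 2\vnorm{x^{\perp}_j}^2 \leq 2\sum_k\vnorm{x^{\perp}_k}^2 = 2NX(t)$, whence $a(\vnorm{x_i-x_j}) \geq a(\sqrt{2NX(t)})$ for every pair. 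Factoring this lower bound out of the nonnegative sum and using $\frac{1}{N^2}\sum_{i,j}\vnorm{v^{\perp}_i - v^{\perp}_j}^2 = 2B(v^{\perp},v^{\perp}) = 2V(t)$ yields the alignment bound $-2a(\sqrt{2NX(t)})V(t)$. Adding the three pieces gives \eqref{eq:maintool}. I expect the only genuinely delicate point to be this alignment estimate — specifically obtaining the sharp radius $\sqrt{2NX(t)}$ rather than a cruder multiple of $\sqrt{X(t)}$ — but since it coincides with the content of Lemma \ref{cor:trivialVdecay}, it can simply be invoked, leaving only the elementary $\alpha$- and $\beta$-computations to be carried out.
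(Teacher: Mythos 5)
Your proposal is correct and follows essentially the same route as the paper: differentiate $V=B(v,v)$, reduce to $\frac{2}{N}\sum_i v^{\perp}_i\cdot\dot v_i$ (your identity $B(v,w)=\frac{1}{N}\sum_i v^{\perp}_i\cdot w_i$ absorbs the $\dot{\overline{v}}$ term that the paper kills via Proposition \ref{prop:derivative_mean} and \eqref{eq:vertequalzero}), and then treat the three summands of $\dot v_i$ exactly as the paper's terse proof indicates, with the alignment piece handled by the symmetrization and the monotonicity of $a$ as in Lemma \ref{cor:trivialVdecay}. All the details you supply, including the bound $\vnorm{x_i-x_j}^2\leq 2NX(t)$ and the identity $\frac{1}{N^2}\sum_{i,j}\vnorm{v^{\perp}_i-v^{\perp}_j}^2=2V(t)$, are accurate.
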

\begin{proof}
Differentiating $V$ for every $t \geq 0,$ we have
\begin{align*}
	\frac{d}{dt} V(t) = \frac{2}{N} \sum^N_{i = 1}  \frac{d}{dt}v^{\perp}_i(t)\cdot v^{\perp}_i(t) = \frac{2}{N} \sum^N_{i = 1} \frac{d}{dt}v_i(t)\cdot v^{\perp}_i(t)- \frac{2}{N} \sum^N_{i = 1} \frac{d}{dt}\overline{v}(t)\cdot v^{\perp}_i(t).
\end{align*}
Hence, inserting the expression for $\dot{v}_i(t)$, using the fact that $a$ is nonincreasing, and invoking Proposition \ref{prop:derivative_mean}, we get \eqref{eq:maintool}.
\end{proof}

%

Since we are interested in the case where 
$\Delta_i$ plays an active role in the dynamics, in what follows we 
assume $\beta(t) > 0$ for all $t \geq 0$. 
As a direct consequence of Lemma \ref{lem:bigv_growth} we get that, by controlling the magnitude of the deviations $\Delta_i$, we can establish the unconditional convergence to consensus.

\begin{theorem} \label{th:perpbound_convergence}
Let $(x(\cdot), v(\cdot))$ be a solution of system \eqref{eq:cuckersmale_perturbed}, and suppose that there exists a $T \geq 0$ such that for every $t \geq T$,
\begin{align} \label{eq:smallerror}
\sum^N_{i = 1} \Delta_i(t)\cdot v^{\perp}_i(t) \leq \phi(t) \sum^N_{i = 1} \vnorm{v^{\perp}_i(t)}^2
\end{align}
for some function $\phi:[T,+\infty) \funarrow [0,\ell]$, where
\begin{align}\label{goodell}
\ell<\frac{\min_{t \geq T}\alpha(t)}{\max_{t \geq T}\beta(t)}.
\end{align}
Then $(x(\cdot), v(\cdot))$ tends to consensus.
\end{theorem}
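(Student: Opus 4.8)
The plan is to prove that $V(t)\to 0$ as $t\to+\infty$, which by Proposition \ref{p:equivconsensus} is equivalent to convergence to consensus. The entire argument rests on converting the Lyapunov estimate \eqref{eq:maintool} of Lemma \ref{lem:bigv_growth} into an autonomous linear differential inequality for $V$ valid on $[T,+\infty)$, from which exponential decay follows by Grönwall's inequality.

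First I would rewrite the perturbation term in \eqref{eq:maintool} using the hypothesis \eqref{eq:smallerror}. Since $\beta(t)\geq 0$, multiplying \eqref{eq:smallerror} by $2\beta(t)/N$ gives, for every $t\geq T$,
\begin{align*}
\frac{2\beta(t)}{N}\sum_{i=1}^N \Delta_i(t)\cdot v_i^\perp(t) \leq \frac{2\beta(t)\phi(t)}{N}\sum_{i=1}^N \vnorm{v_i^\perp(t)}^2 .
\end{align*}
The key observation is the identity $\sum_{i=1}^N \vnorm{v_i^\perp(t)}^2 = N V(t)$, which follows immediately from \eqref{eq:otherB} together with $v_i^\perp = v_i - \overline{v}$; it turns the right-hand side into $2\beta(t)\phi(t)V(t)$.

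Next I would insert this bound into \eqref{eq:maintool} and discard the manifestly nonpositive communication term $-2a(\sqrt{2NX(t)})V(t)$ (since $a\geq 0$ and $V\geq 0$). This leaves, for $t\geq T$,
\begin{align*}
\frac{d}{dt}V(t) \leq -2\bigl(\alpha(t)-\beta(t)\phi(t)\bigr)V(t).
\end{align*}
Using the uniform bounds $\alpha(t)\geq \min_{t\geq T}\alpha(t)$, $\beta(t)\leq \max_{t\geq T}\beta(t)$ and $\phi(t)\leq \ell$ (all quantities nonnegative), the coefficient is controlled by the constant $C \define \min_{t\geq T}\alpha(t)-\ell\max_{t\geq T}\beta(t)$, and assumption \eqref{goodell} is precisely what guarantees $C>0$. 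Hence $\frac{d}{dt}V(t)\leq -2C V(t)$ for $t\geq T$, and Grönwall's inequality yields $V(t)\leq V(T)e^{-2C(t-T)}\to 0$.

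The argument is essentially mechanical, so there is no deep obstacle; the only points demanding care are bookkeeping. One must use the sign of $\beta$ correctly when passing from \eqref{eq:smallerror} to the bound on the perturbation term — this is exactly where the standing assumption $\beta\geq 0$ enters — and one must know that the extrema $\min_{t\geq T}\alpha$ and $\max_{t\geq T}\beta$ in \eqref{goodell} are finite and attained, so that $C$ is a well-defined positive constant. Finally, because solutions are understood in the Carathéodory sense, $V$ is merely absolutely continuous and \eqref{eq:maintool} holds only for almost every $t$; the integral form of Grönwall's lemma is therefore the appropriate tool to conclude the exponential decay.
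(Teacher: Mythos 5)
Your proof is correct and follows essentially the same route as the paper: both insert \eqref{eq:smallerror} into \eqref{eq:maintool}, drop the nonpositive kernel term, use $\sum_i\vnorm{v_i^\perp}^2 = NV$ to obtain the differential inequality $\frac{d}{dt}V \leq 2\beta(t)(\ell-\alpha(t)/\beta(t))V$, and conclude exponential decay by integration, with \eqref{goodell} ensuring the exponent is bounded above by a strictly negative constant. Your version is slightly more explicit than the paper's about why the integrated exponent diverges to $-\infty$ (naming the constant $C=\min\alpha-\ell\max\beta>0$), but the argument is the same.
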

\begin{proof}
Under the assumption \eqref{eq:smallerror}, for every $t \geq T$ the upper bound in \eqref{eq:maintool} can be simplified to
\begin{align*}
\frac{d}{dt}V(t) \leq 2 \beta(t) \left(\ell - \frac{\alpha(t)}{\beta(t)}\right) V(t).
\end{align*}
Integrating between $T$ and $t$ (where $t \geq T$) we get $V(t) \leq V(T) e^{2 \int^t_T \beta(s) \left(\ell - \frac{\alpha(s)}{\beta(s)}\right)ds},$ and as the factor $\ell - \alpha(s)/\beta(s)$ is negative while $\beta$ is nonnegative, $V$ approaches $0$ exponentially fast.
\end{proof}

We then immediately get the following

\begin{corollary} \label{cor:noperp_convergence}
If there exists $T \geq 0$ such that $\Delta^{\perp}_i(t) = 0$ for every $t \geq T$ and for every $1 \leq i \leq N$, then any solution of system \eqref{eq:cuckersmale_perturbed} tends to consensus.
\end{corollary}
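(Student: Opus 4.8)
The plan is to deduce the statement directly from Theorem \ref{th:perpbound_convergence} by showing that the hypothesis $\Delta^{\perp}_i(t) = 0$ forces the perturbation cross-term in the $V$-estimate \eqref{eq:maintool} to vanish, so that the key bound \eqref{eq:smallerror} holds trivially with $\phi \equiv 0$. The whole argument is thus a short reduction rather than a fresh estimate.

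First I would apply the orthogonal decomposition $\R^{dN} = \mathcal{V}_f \oplus \mathcal{V}_{\perp}$ to the perturbation vector $\Delta(t) = (\Delta_1(t),\ldots,\Delta_N(t))$. Writing $\Delta_i(t) = \Delta^f_i(t) + \Delta^{\perp}_i(t)$ and recalling the explicit form $\Delta^f_i = \overline{\Delta}$ noted after \eqref{eq:otherB}, I split the cross-term as
\begin{align*}
\sum^N_{i = 1} \Delta_i(t)\cdot v^{\perp}_i(t) = \overline{\Delta}(t)\cdot\sum^N_{i = 1} v^{\perp}_i(t) + \sum^N_{i = 1} \Delta^{\perp}_i(t)\cdot v^{\perp}_i(t).
\end{align*}
Since $v^{\perp}(t) \in \mathcal{V}_{\perp}$, the first sum vanishes by \eqref{eq:vertequalzero} applied with the fixed vector $w = \overline{\Delta}(t)$. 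Hence only the genuinely transversal part $\Delta^{\perp}_i$ of the deviation can influence the decay of $V$, and the common estimation error $\overline{\Delta}$ is irrelevant to it.

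Second, under the assumption $\Delta^{\perp}_i(t) = 0$ for all $t \geq T$ and all $i$, the remaining sum $\sum_i \Delta^{\perp}_i(t)\cdot v^{\perp}_i(t)$ is identically zero on $[T,+\infty)$. Therefore \eqref{eq:smallerror} holds on $[T,+\infty)$ with $\phi \equiv 0$; as the zero function takes values in $[0,\ell]$ for any $\ell \geq 0$, I may pick any admissible $\ell$ and invoke Theorem \ref{th:perpbound_convergence} to conclude that $(x(\cdot),v(\cdot))$ tends to consensus.

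The only point requiring care is verifying that \eqref{goodell} is satisfiable once $\phi$ is pinned to $0$: this needs the interval $[0,\min_{t\geq T}\alpha(t)/\max_{t\geq T}\beta(t))$ to be nonempty, i.e.\ $\alpha$ bounded away from zero and $\beta$ finite on $[T,+\infty)$, which is exactly the regime in which Theorem \ref{th:perpbound_convergence} operates. Conceptually this is where the substance lies, and it matches the intuition of Remark \ref{rem:derivative_mean}: it is the uniform alignment strength $\alpha$, and not the perturbation, that drives the system to consensus, while the hypothesis $\Delta^{\perp}_i = 0$ simply guarantees that the (possibly large) shared estimation error $\overline{\Delta}$ acts as a pure drift of $\overline{v}$ that cancels in $v^{\perp}$ and so cannot obstruct the alignment.
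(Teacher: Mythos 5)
Your proposal is correct and follows essentially the same route as the paper: the paper likewise observes that $\Delta^{\perp}_i = 0$ forces $\Delta_i = \overline{\Delta}$, kills the cross-term $\sum_i \Delta_i\cdot v^{\perp}_i$ via \eqref{eq:vertequalzero}, and then invokes Theorem \ref{th:perpbound_convergence} with $\phi \equiv 0$. Your extra remark about the nonemptiness of $[0,\ell]$ under \eqref{goodell} is a harmless additional precision, not a different argument.
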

\begin{proof}
Noting that $\Delta^{\perp}_i = 0$ implies $\Delta_i = \overline{\Delta}$, by \eqref{eq:vertequalzero} we have $\sum^N_{i = 1} \Delta_i(t)\cdot v^{\perp}_i(t) = \sum^N_{i = 1} \overline{\Delta} \cdot v^{\perp}_i(t) = 0.$ Hence, we can apply Theorem \ref{th:perpbound_convergence} with $\phi(t) = 0$ for every $t \geq T$ to obtain the result.
\end{proof}

\begin{remark}
A trivial implication of Corollary \ref{cor:noperp_convergence} is that any solution of system \eqref{eq:cuckersmale_uniform} tends to consensus (this was already a consequence of Theorem \ref{thm:hhk}), but has moreover a rather nontrivial implication: also any solution of systems subjected to \textit{deviated} uniform control, i.e., systems like \eqref{eq:cuckersmale_perturbed} where $\Delta_i(t) = \Delta(t)$ for every $i = 1,\ldots, N$ and for every $t \geq 0$, tends to consensus, because it holds
\begin{align*}
\begin{split}
\Delta^{\perp}_i(t) = \Delta_i(t) - \frac{1}{N}\sum^N_{j = 1} \Delta_j(t) = \Delta(t) - \Delta(t) = 0
\end{split}
\end{align*}
for every $i = 1,\ldots, N$ and for every $t \geq 0$, therefore Corollary \ref{cor:noperp_convergence} applies. This means that systems of this kind converge to consensus even if the agents have an incorrect knowledge of the mean velocity, provided they all make the same mistake. 
\end{remark}

Another consequence of the previous results is the following corollary, which provides an upper bound for tolerable perturbations under which consensus emergence can be unconditionally guaranteed.

\begin{corollary}
For every $i = 1, \ldots, N$, let $\varepsilon_i: \R_+ \funarrow [0,\ell]$ for $\ell > 0$ as in \eqref{goodell}. If there exists $T \geq 0$ such that $\vnorm{\Delta_i(t)} \leq \varepsilon_i(t) \vnorm{v_i^{\perp}(t)}$ for every $t \geq T$ and for every $i = 1,\ldots, N$, then any solution of system \eqref{eq:cuckersmale_perturbed} tends to consensus.
\end{corollary}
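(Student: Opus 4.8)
The plan is to deduce this corollary directly from Theorem \ref{th:perpbound_convergence} by showing that the hypothesis $\vnorm{\Delta_i(t)} \leq \varepsilon_i(t)\vnorm{v_i^{\perp}(t)}$ forces the structural inequality \eqref{eq:smallerror} to hold with an admissible choice of $\phi$. Since Theorem \ref{th:perpbound_convergence} already delivers convergence to consensus once \eqref{eq:smallerror} and \eqref{goodell} are in place, the entire task reduces to producing a function $\phi:[T,+\infty)\funarrow[0,\ell]$ that dominates the cross term. The constant $\ell$ is, by hypothesis, already the one satisfying \eqref{goodell}, so no further work is needed on that front.

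The key step I would carry out is a termwise Cauchy--Schwarz estimate. For each $i$ and each $t \geq T$, the pointwise bound gives
\begin{align*}
\Delta_i(t)\cdot v_i^{\perp}(t) \leq \vnorm{\Delta_i(t)}\,\vnorm{v_i^{\perp}(t)} \leq \varepsilon_i(t)\vnorm{v_i^{\perp}(t)}^2 \leq \ell\,\vnorm{v_i^{\perp}(t)}^2,
\end{align*}
where the last inequality uses $\varepsilon_i(t) \in [0,\ell]$. Summing over $i = 1,\ldots,N$ then yields
\begin{align*}
\sum^N_{i = 1} \Delta_i(t)\cdot v_i^{\perp}(t) \leq \ell \sum^N_{i = 1} \vnorm{v_i^{\perp}(t)}^2,
\end{align*}
which is precisely \eqref{eq:smallerror} for the constant choice $\phi(t) \equiv \ell$.

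Finally I would verify admissibility of this $\phi$ and close the argument: $\phi \equiv \ell$ trivially maps $[T,+\infty)$ into $[0,\ell]$, and by assumption $\ell < \min_{t\geq T}\alpha(t)/\max_{t\geq T}\beta(t)$, so \eqref{goodell} is met. Theorem \ref{th:perpbound_convergence} then applies verbatim and gives that $(x(\cdot),v(\cdot))$ tends to consensus. There is no genuine obstacle here — the only point requiring care is confirming that the \emph{uniform} bound $\varepsilon_i(t) \leq \ell$ (rather than a merely average bound) is what lets us pull out the single constant $\ell$ and keep $\phi$ inside $[0,\ell]$; this is exactly what the hypothesis provides, so the corollary follows as an essentially immediate specialization of the preceding theorem.
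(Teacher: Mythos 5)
Your proof is correct and is exactly the intended argument (the paper states this corollary without printing a proof, but it is clearly meant as the Cauchy--Schwarz specialization of Theorem \ref{th:perpbound_convergence} that you carry out). The termwise bound $\Delta_i(t)\cdot v_i^{\perp}(t)\leq \varepsilon_i(t)\vnorm{v_i^{\perp}(t)}^2\leq \ell\vnorm{v_i^{\perp}(t)}^2$ and the choice $\phi\equiv\ell$ (or, slightly sharper, $\phi(t)=\max_i\varepsilon_i(t)$) satisfy \eqref{eq:smallerror} and \eqref{goodell}, so the theorem applies verbatim.
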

%


\subsection{Perturbations as leader-based feedback} 

We now consider the problem of consensus stabilization based on a leader-following feedback.

\begin{example}\label{ex:followingtheleader}
Let us use Lemma \ref{lem:bigv_growth} to study the convergence to consensus of a system like \eqref{eq:cuckersmale_local}, where each agent computes its local mean velocity $\overline{v}_i$ by taking into account itself plus a single common agent $(x_1, v_1)$, which in turn takes into account only itself by computing $\overline{v}_1 = v_1$. Formally, given two finite conjugate exponents $p, q$ (i.e., two positive real numbers satisfying $1/p + 1/q = 1$), we assume that for any $i = 1, \ldots, N$ it holds
\begin{align*}
\overline{v}_i(t) = \frac{1}{p}v_i(t) + \frac{1}{q}v_1(t) \quad \text{ for every } t\geq 0.
\end{align*}
We shall prove that any solution of this system tends to consensus, no matter how small the positive weight $1/q$ of $v_1$ in $\overline{v}_i$ is. We start by writing the system under the form \eqref{eq:cuckersmale_perturbed}, with $\alpha(t) = \beta(t) = \gamma > 0$ and
\begin{align*}
\Delta_i(t) = \frac{1}{p}v^{\perp}_i(t) + \frac{1}{q}v^{\perp}_1(t)\quad \text{ for every } t\geq 0.
\end{align*}
Hence, the perturbation term in the estimate \eqref{eq:maintool} on the decay of $V$ becomes
\begin{align*}
	\frac{2 \gamma}{N}\sum^N_{i = 1} \Delta_i(t) \cdot v^{\perp}_i(t) & =  \frac{2 \gamma}{N}\sum^N_{i = 1} \frac{1}{p}(v^{\perp}_i(t) + \frac{1}{q}v^{\perp}_1(t))\cdot v^{\perp}_i(t)\\
	& = \frac{1}{p} \frac{2 \gamma}{N} \sum^N_{i = 1} \vnorm{v_i^{\perp}(t)}^2 + \frac{1}{q} \frac{2 \gamma}{N} v^{\perp}_1(t) \cdot \underbrace{\sum^N_{i = 1} v^{\perp}_i(t)}_{= 0} = \frac{2 \gamma}{p} V(t).
\end{align*}
Lemma \ref{lem:bigv_growth} let us bound the growth of $V$ as
\begin{align*}
	\frac{d}{dt}V(t) \leq 2 \gamma \left(-1 + \frac{1}{p}\right) V(t) = -\frac{2 \gamma}{q} V(t),
\end{align*}
which ensures the exponential decay of the functional $V$ for any $q > 0$.
\end{example}

Figure \ref{fig:1} shows the behavior of the group of agents considered in Example \ref{ex:followingtheleader} depending on the parameter $ q\in (0, 1]$, which represents the influence of the leader in the local average. The result above asserts that for every such $q$, the system will converge to consensus independently of the initial configuration, 
as illustrated 
in Figures \ref{fig:1} and \ref{fig:12}. It can be observed that, the weaker the influence of the leader, the longer the group of agents takes to align.

\begin{figure}[!ht]
\centering
\includegraphics[width = 0.49\textwidth]{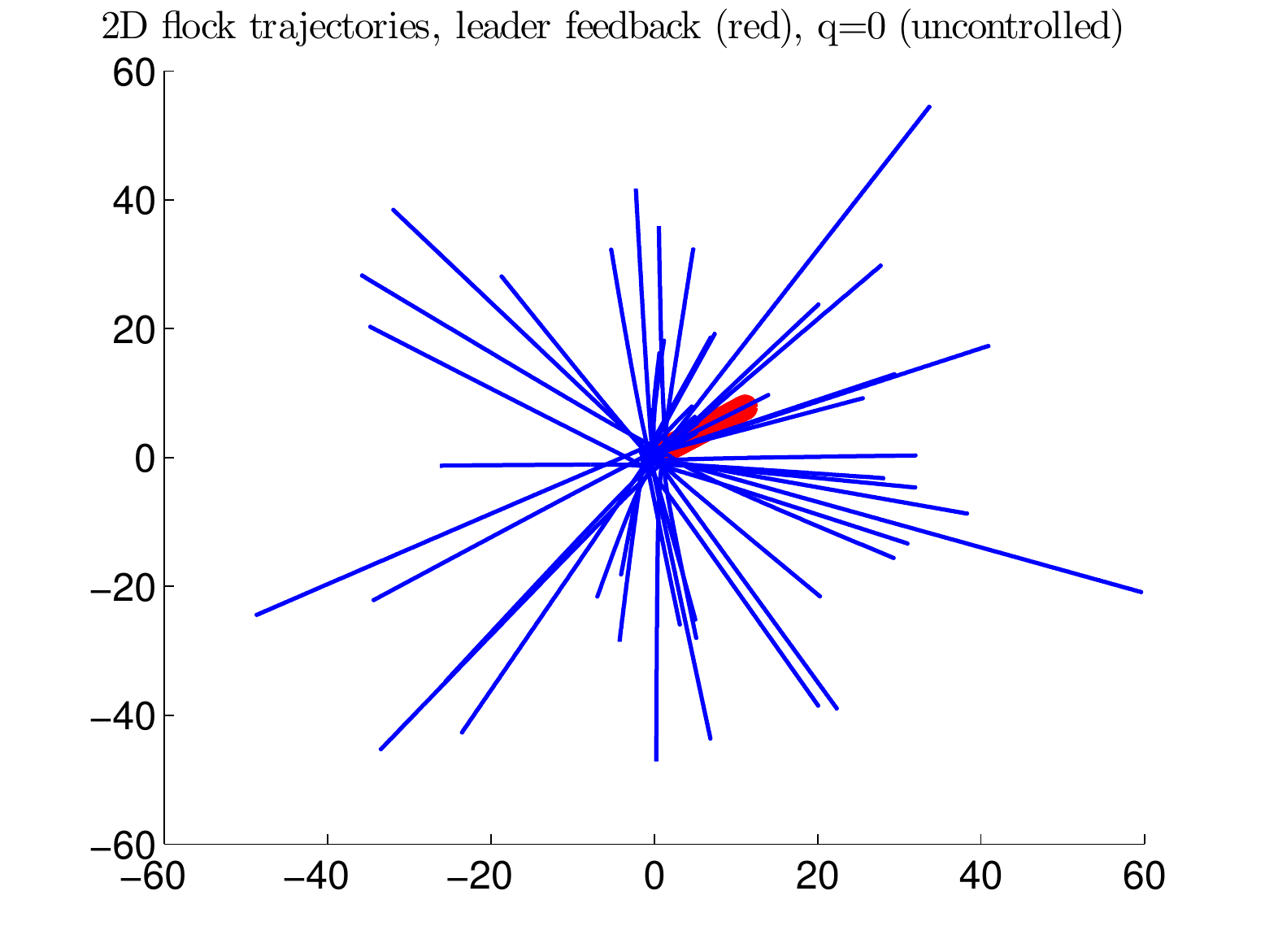}
\includegraphics[width = 0.49\textwidth]{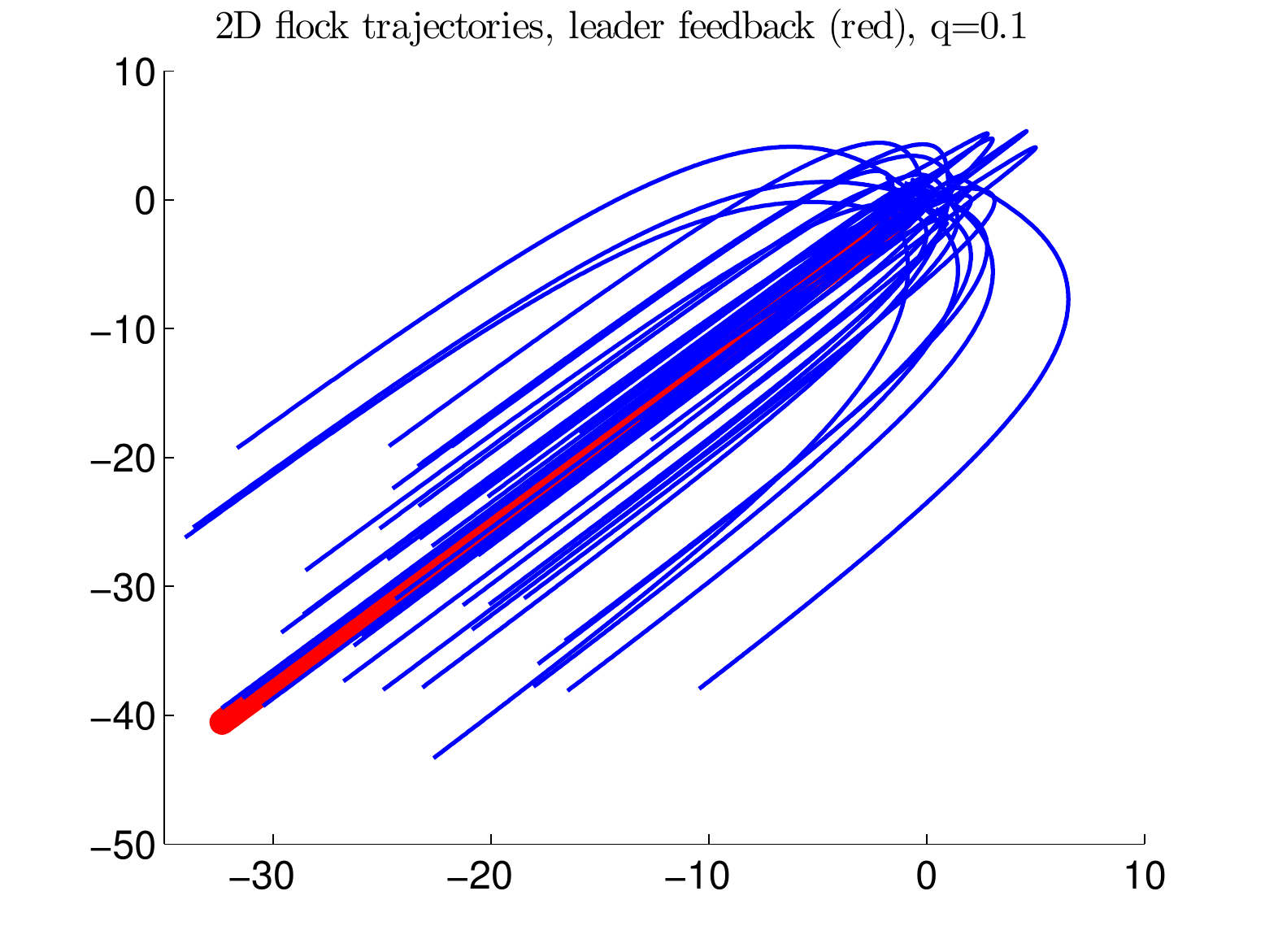}\\
\includegraphics[width = 0.49\textwidth]{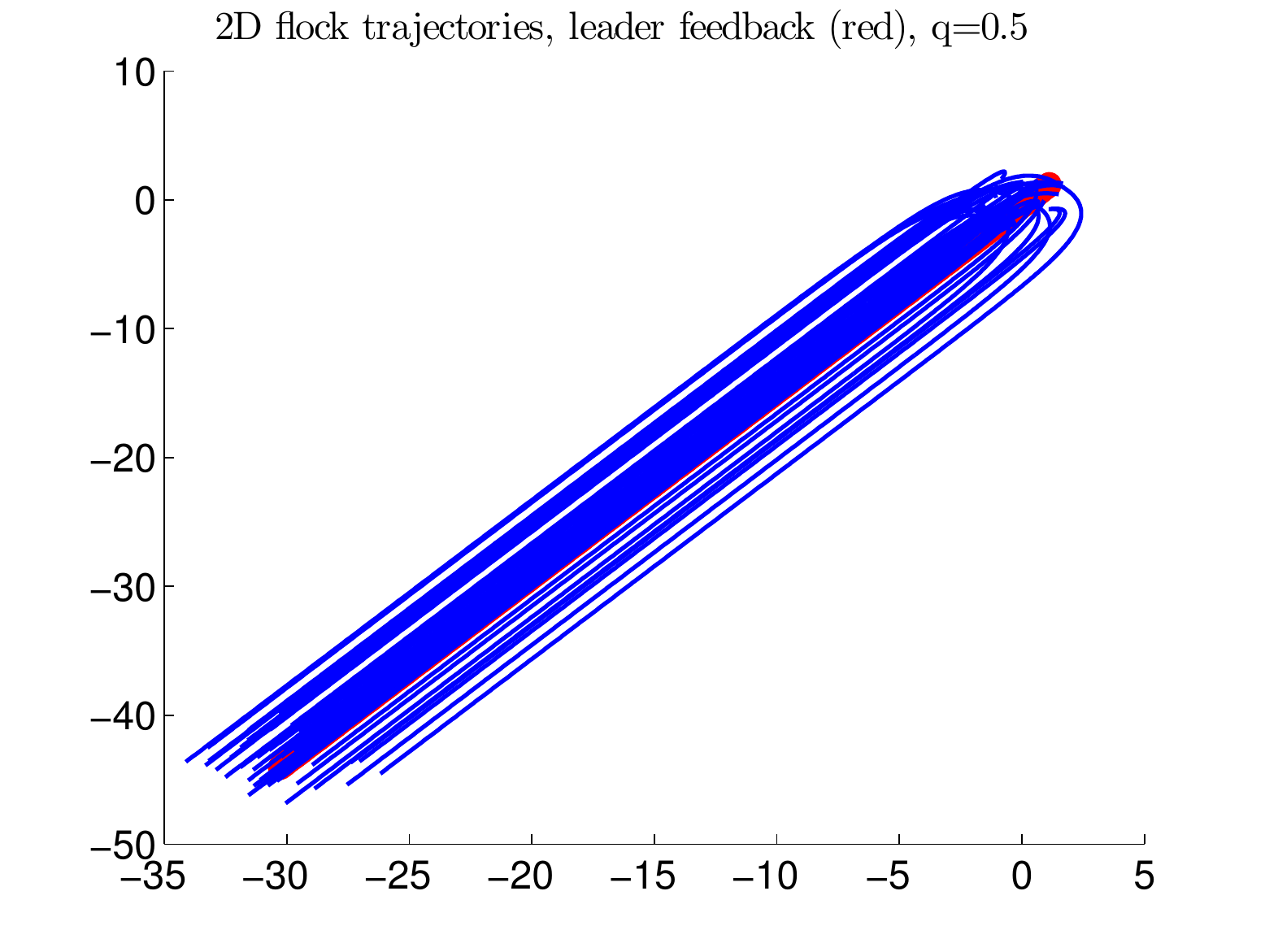}
\includegraphics[width = 0.49\textwidth]{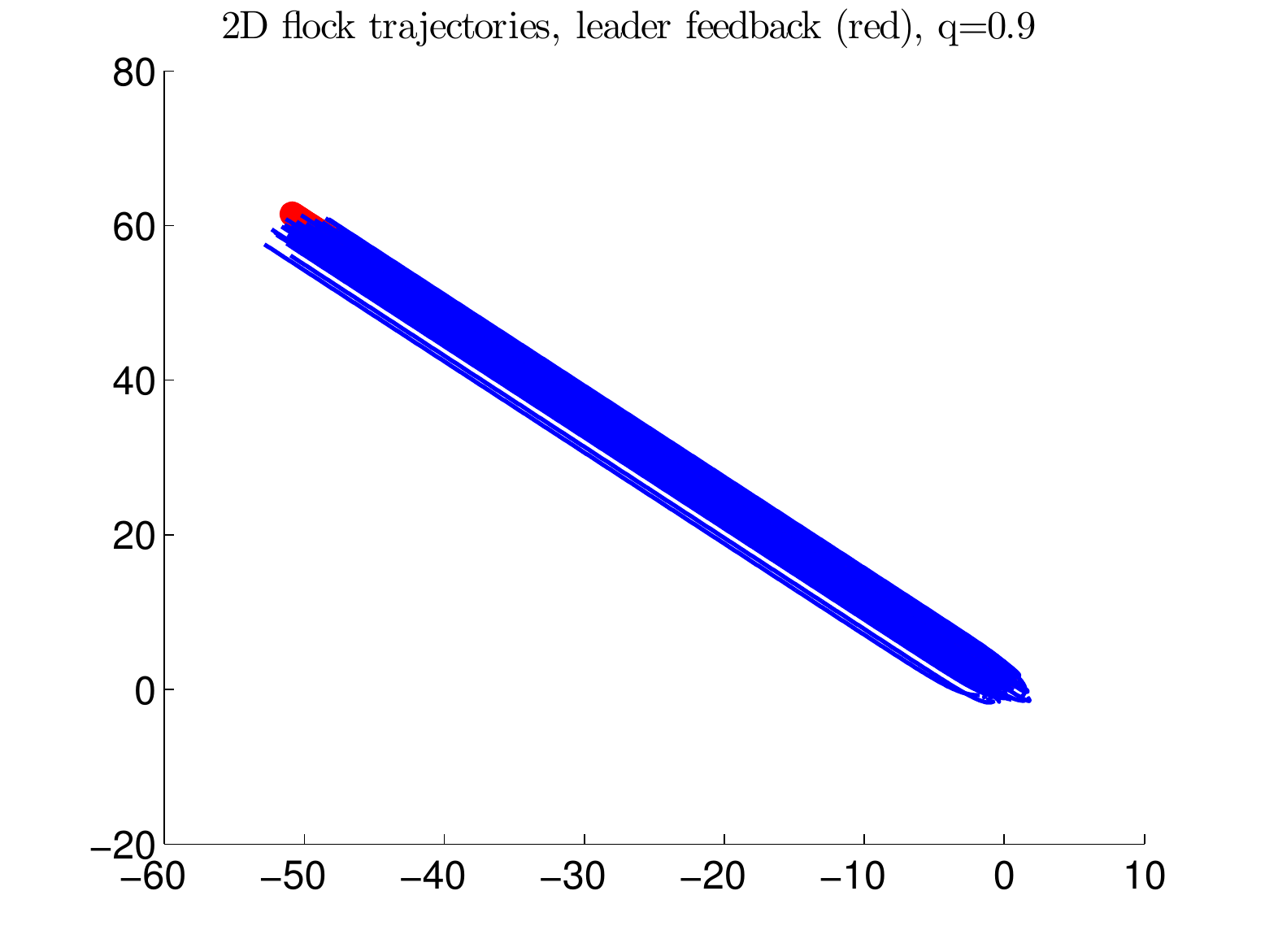}
\caption{Leader-based feedback control. Simulations with 100 agents, the value $q$ indicates the strength of the leader in the partial average. It can be observed how, as the strength of the leader is increased, convergent behavior is improved.}
\label{fig:1}
\vspace{0.2cm}
\includegraphics[width = 0.49\textwidth]{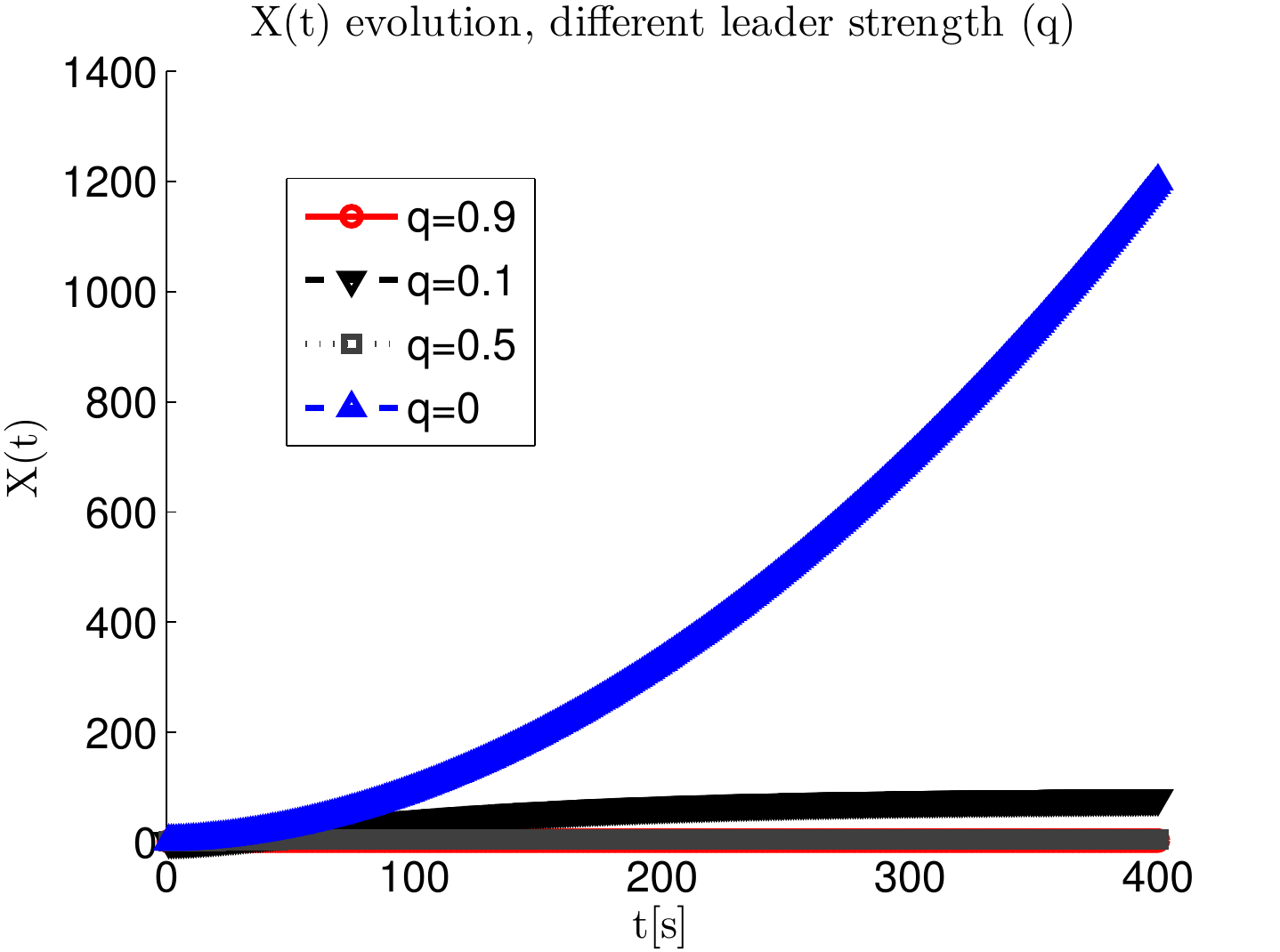}
\includegraphics[width = 0.49\textwidth]{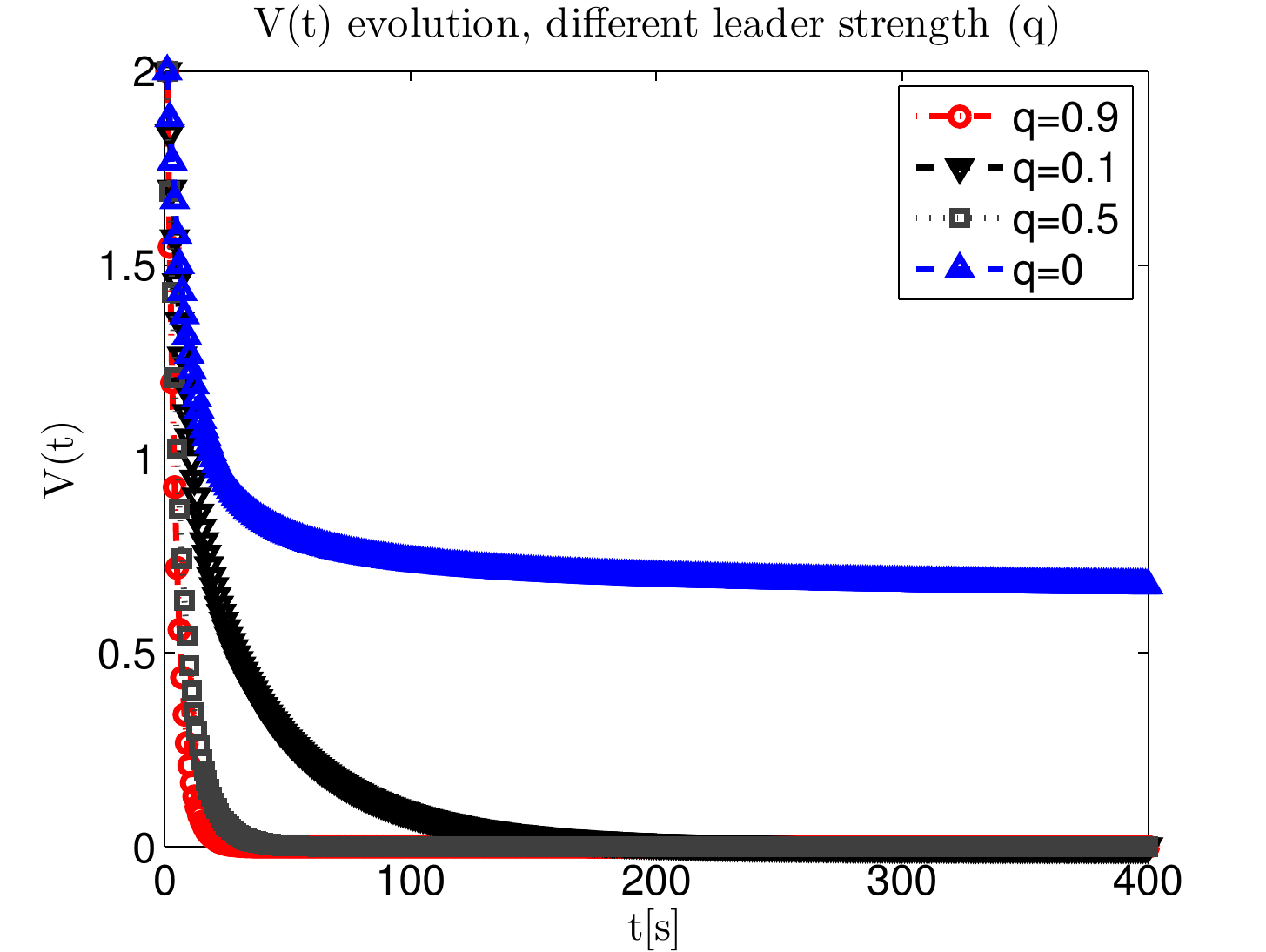}
\caption{Leader-based feedback control. Simulations with 100 agents, where $q$ indicates the strength of the leader in the partial average. Evolution of $X$ and $V$ for the simulations in Figure \ref{fig:1}.}
\label{fig:12}
\end{figure}

Besides the Cucker-Smale model, the leader-following control problem was also studied in \cite{wongkaew2015control} for the Hegselmann-Krause model, and in \cite{borzi2015modeling} for the D'Orsogna et al. model (see \cite{d2006self} as a reference): in these papers, the leader's optimal strategy to induce pattern formation was discussed.

\subsection{Feedback under perturbed information}

Motivated by the example of the last section, we turn our attention to the study of systems like \eqref{eq:cuckersmale_perturbed} where the perturbation of the mean of the $i$-th agent has the specific form
\begin{align} \label{eq:pert_cucker}
\Delta_i(t) = \sum^N_{j = 1} \omega_{ij}(t) v^{\perp}_j(t) \quad \text{ for every } t\geq 0,
\end{align}
for some positive measurable mapping $\omega: \R_+\rightarrow\R^{N\times N}$, i.e., for every $t \geq 0$ the function $\omega$ has the property $\omega_{ij}(t) > 0$ for all $i,j = 1, \ldots, N$.

An example of the above framework is provided by a weight matrix of the form
\begin{align*}
\omega_{ij}(t) \define \frac{\phi(\|x_i(t)-x_j(t)\|)}{\eta_i(t)} \quad \text{ for every } t\geq 0.
\end{align*}
where the weighting function $\phi$ corresponds to the Cucker-Smale kernel \eqref{eq:cuckerkernel} with $H = 1, \sigma = 1$ and $\beta = \epsilon$, i.e.,
\begin{align*}
\phi(r)\define\frac{1}{(1+r^2)^{\epsilon}}.
\end{align*}
and the normalizing terms $\eta_i$ are defined as
\begin{align} \label{eq:usual_choice}
\eta_i(t)\define\sum^N_{j = 1} \phi(\|x_i(t)-x_j(t)\|).
\end{align}
Let us consider the case $\alpha(t) = \alpha > 0$ and $\beta(t) = \beta > 0$ for every $t \geq 0$. Figures \ref{fig:2} and \ref{fig:22} show the behavior of the system when changing the balance between the constants $\alpha$ and $\beta$. In this test, we fix a large value of $\beta=10$, representing a strong perturbation of the feedback, and a small value of $\epsilon=1e-5$, related to a disturbance which is distributed among all the agents: increasing the value of $\alpha$ in system \eqref{eq:cuckersmale_perturbed} (which represents the energy of the \textsl{correct information feedback}) induces faster consensus emergence.

\begin{figure}[!ht]
\centering
\includegraphics[width = 0.49\textwidth]{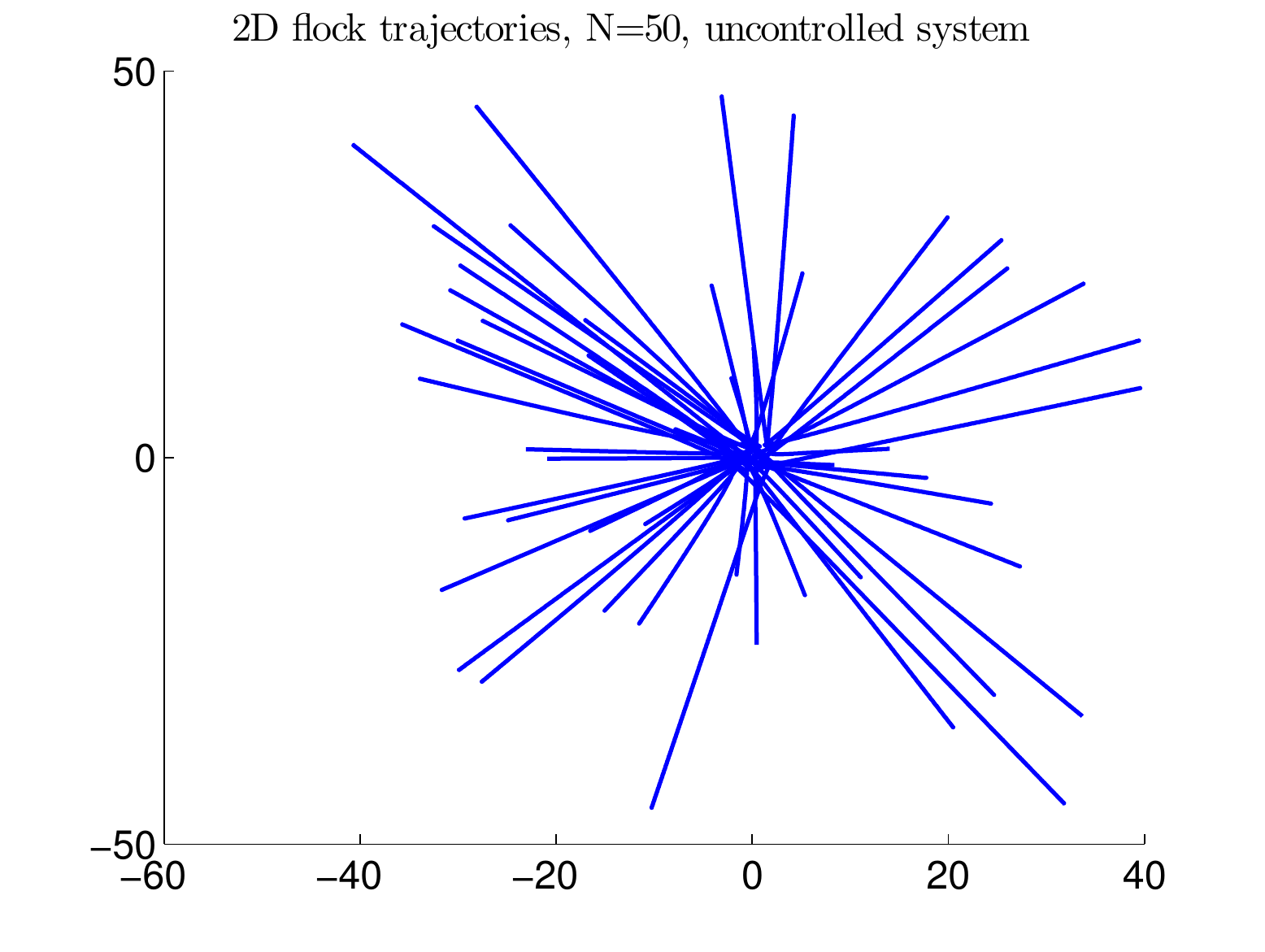}
\includegraphics[width = 0.49\textwidth]{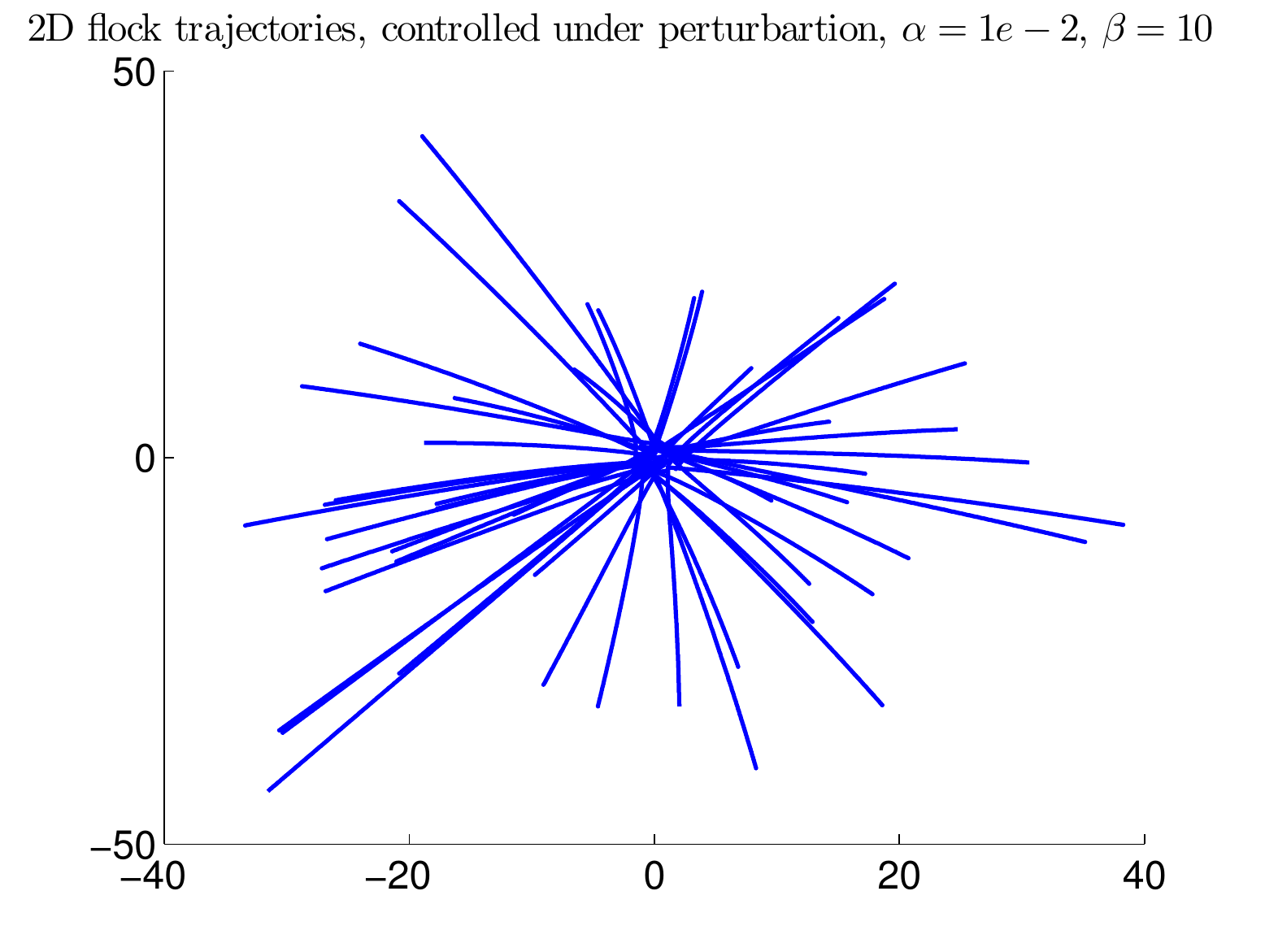}\\
\includegraphics[width = 0.49\textwidth]{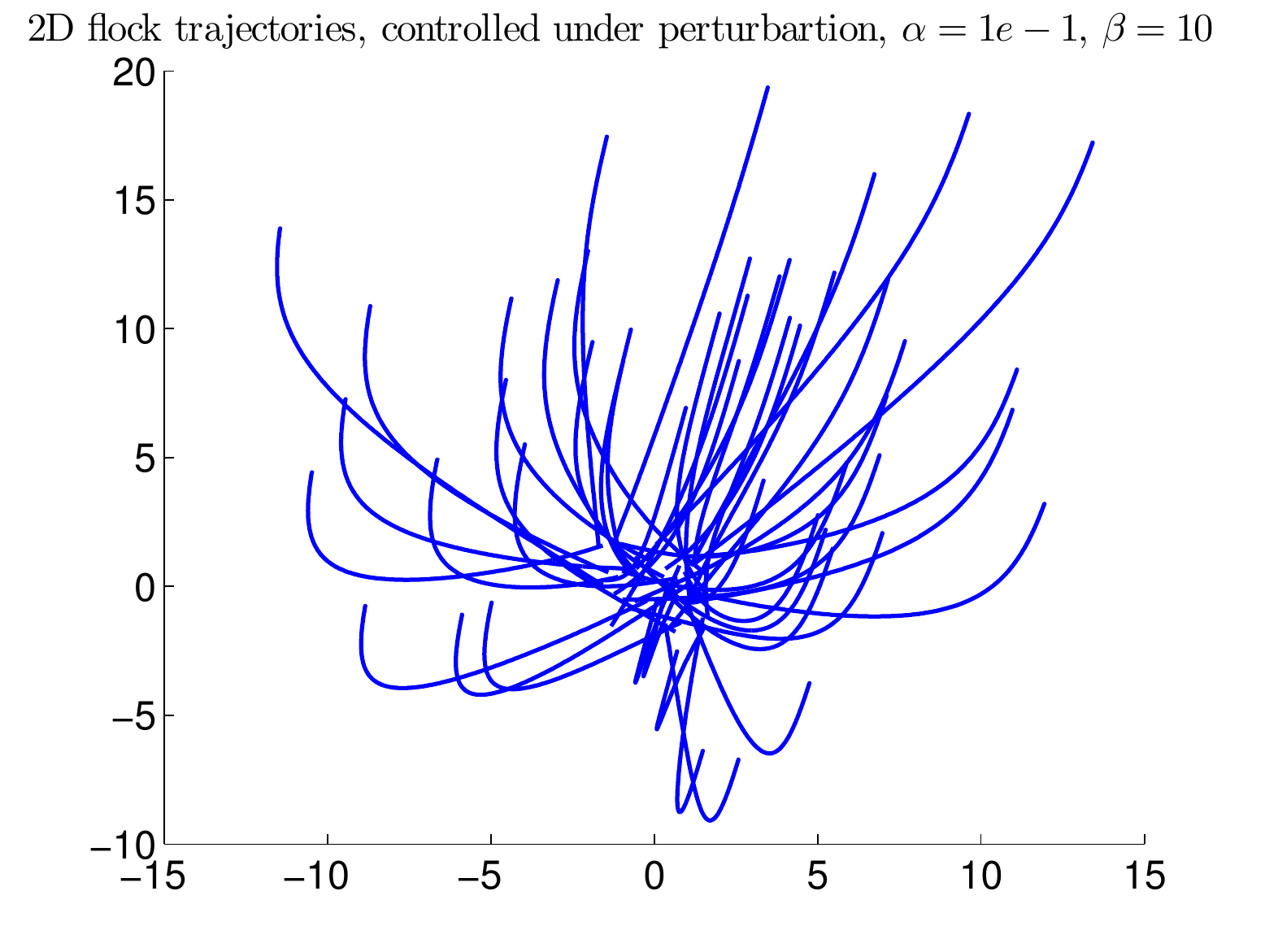}
\includegraphics[width = 0.49\textwidth]{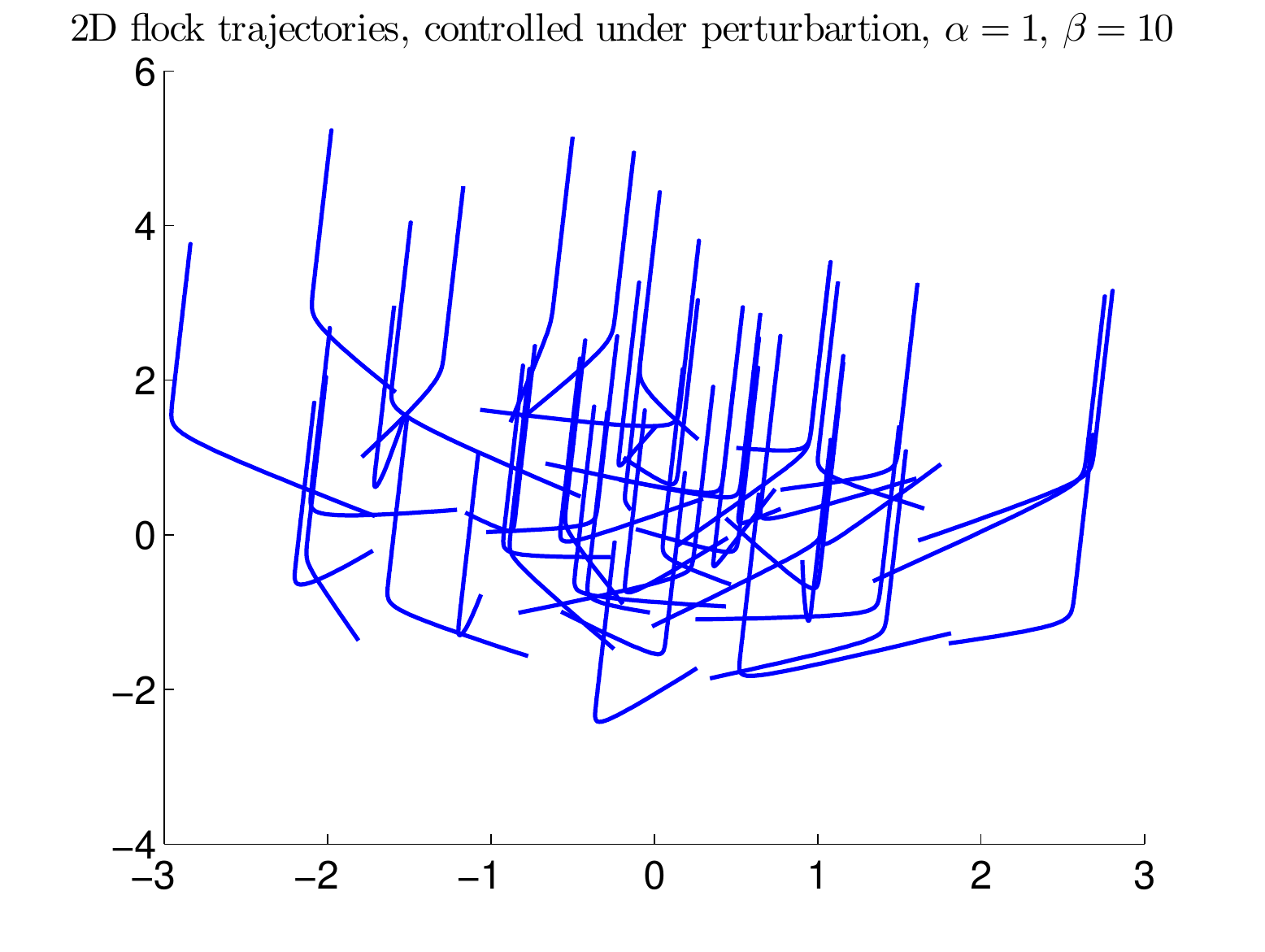}
\caption{Total feedback control under structured perturbations. For a fixed strong structured perturbation term ($\beta=10$), different energies for the unperturbed control term $\alpha$ generate different consensus behavior; the stronger the correct information term is, the faster consensus is achieved.}
\label{fig:2}
\vspace{0.15cm}
\includegraphics[width = 0.49\textwidth]{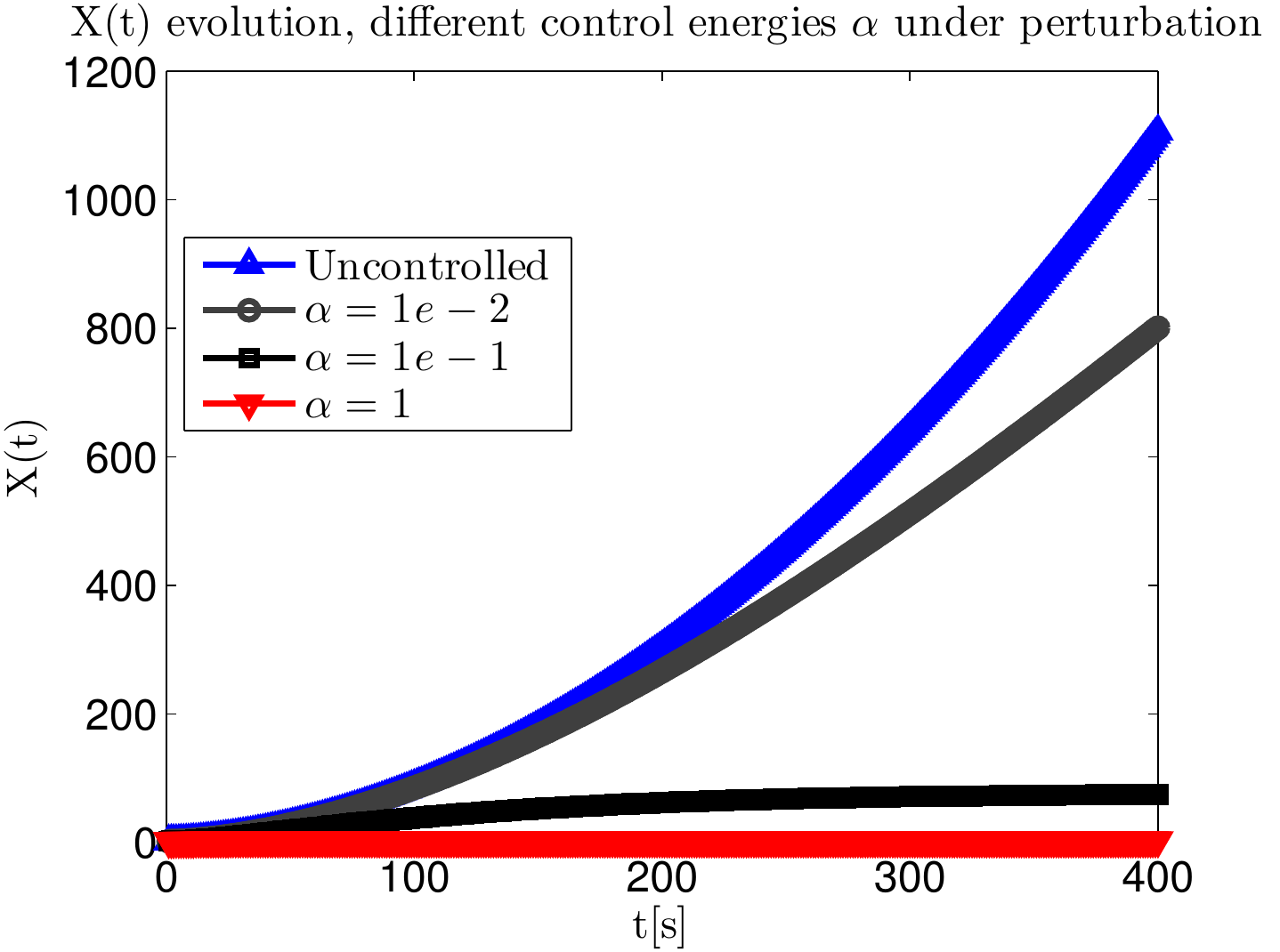}
\includegraphics[width = 0.49\textwidth]{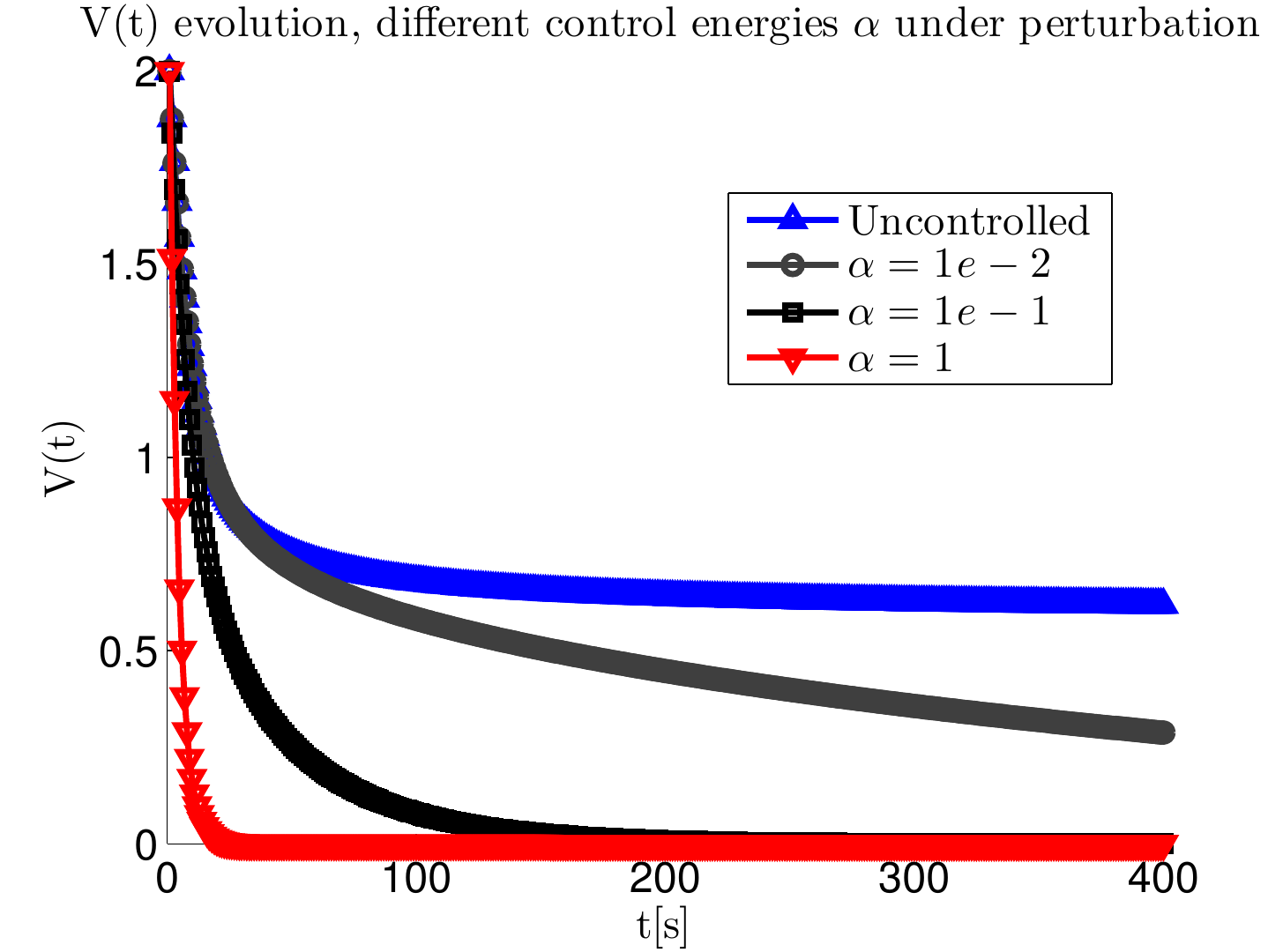}
\caption{Total feedback control under structured perturbations. Evolution of $X$ and $V$ for the simulations in Figure \ref{fig:2}.}
\label{fig:22}
\end{figure}

As already mentioned in Section \ref{sec:alignmentexample}, the use of a common normalizing factor $\eta$ in place of different terms $\eta_i$ greatly helps in the study of consensus emergence. For this particular case we get the following result.


\begin{corollary}[{\cite[Corollary 3]{bongini2015conditional}}]\label{cor:phi_consensus}
Suppose that for all $i,j = 1,\ldots, N$ the function $\omega_{ij}:\R_+\funarrow\R_+$ satisfies
\begin{align*}
\omega_{ij}(t) = \frac{\phi(\|x_i(t)-x_j(t)\|)}{\eta(t)} \quad \text{ for every } t\geq 0,
\end{align*}
where $\phi:\R_+ \funarrow \left(0,1\right]$ is a non increasing, positive, bounded function, and $\eta:\R_+ \funarrow \R_+$ is a nonnegative bounded function satisfying
\begin{align*}
1 \leq N \frac{\beta(t)}{\alpha(t)} \leq \eta(t) \quad \text{for every } t \geq 0.
\end{align*}
Then, any solution of system \eqref{eq:cuckersmale_perturbed} with $\Delta_i$ as in \eqref{eq:pert_cucker} tends to consensus.
\end{corollary}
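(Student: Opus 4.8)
The plan is to use $V(t)=B(v(t),v(t))$ as a Lyapunov functional and to check that the structured perturbation $\Delta_i$ in \eqref{eq:pert_cucker} satisfies the smallness hypothesis \eqref{eq:smallerror} of Theorem \ref{th:perpbound_convergence}. All the dynamics is already encoded in the differential inequality \eqref{eq:maintool} of Lemma \ref{lem:bigv_growth}, so the only genuine task is to bound from above the perturbation term $\frac{2\beta(t)}{N}\sum_{i=1}^N \Delta_i(t)\cdot v_i^{\perp}(t)$; once this is controlled by $2\alpha(t)V(t)$, consensus follows formally.

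First I would insert the explicit weights $\omega_{ij}(t)=\phi(\vnorm{x_i(t)-x_j(t)})/\eta(t)$ into \eqref{eq:pert_cucker}. Writing $\phi_{ij}(t)\define\phi(\vnorm{x_i(t)-x_j(t)})$, this gives $\sum_{i=1}^N \Delta_i\cdot v_i^{\perp}=\frac{1}{\eta(t)}\sum_{i,j=1}^N \phi_{ij}\, v_i^{\perp}\cdot v_j^{\perp}$. The two features to exploit are that the matrix $(\phi_{ij})$ is \emph{symmetric} and has entries in $(0,1]$. Using the elementary estimate $v_i^{\perp}\cdot v_j^{\perp}\le\frac12(\vnorm{v_i^{\perp}}^2+\vnorm{v_j^{\perp}}^2)$, then symmetry of $\phi_{ij}$ to identify the two resulting halves of the double sum, and finally $\phi_{ij}\le 1$ to bound $\sum_{j=1}^N\phi_{ij}\le N$, I obtain $\sum_{i=1}^N \Delta_i\cdot v_i^{\perp}\le\frac{N}{\eta(t)}\sum_{i=1}^N\vnorm{v_i^{\perp}}^2$.

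Next I would translate this into the form required by \eqref{eq:smallerror}. Since $B$ restricted to $\cal{V}_{\perp}$ is $1/N$ times the Euclidean product, one has $\sum_{i=1}^N\vnorm{v_i^{\perp}}^2=N\,V(t)$, so the previous estimate is exactly \eqref{eq:smallerror} with the function there taken equal to $N/\eta(t)$. The hypothesis $1\le N\beta(t)/\alpha(t)\le\eta(t)$ then yields $N/\eta(t)\le\alpha(t)/\beta(t)$, i.e.\ the perturbation is dominated by the true alignment feedback $2\alpha(t)V(t)$; Theorem \ref{th:perpbound_convergence} applies and forces $V(t)\to 0$, which by Proposition \ref{p:equivconsensus} is precisely convergence to consensus.

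The main obstacle is the perturbation estimate itself: the cross terms $v_i^{\perp}\cdot v_j^{\perp}$ are genuinely sign-indefinite (indeed $\sum_{i,j}v_i^{\perp}\cdot v_j^{\perp}=0$ because $\sum_i v_i^{\perp}=0$), so they cannot be discarded, and it is the symmetrization combined with $\phi_{ij}\le 1$ that converts them into the manageable diagonal quantity $\sum_i\vnorm{v_i^{\perp}}^2$. A subtler point is that $N\beta/\alpha\le\eta$ only gives $N/\eta\le\alpha/\beta$ with equality permitted; at this borderline the strict margin demanded by \eqref{goodell} degenerates and \eqref{eq:maintool} reduces to the bare $\frac{d}{dt}V\le -2a(\sqrt{2NX(t)})V$ of Lemma \ref{cor:trivialVdecay}, recovering only the conditional (uncontrolled) decay. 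I would therefore verify that the boundedness of $\eta$ and the normalization $\phi\le 1$ supply the strict inequality needed to invoke Theorem \ref{th:perpbound_convergence}, since this is exactly where the estimate is tightest.
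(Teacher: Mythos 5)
Your overall strategy --- feed the structured perturbation \eqref{eq:pert_cucker} into Lemma \ref{lem:bigv_growth} and verify the smallness condition \eqref{eq:smallerror} of Theorem \ref{th:perpbound_convergence} --- is the right one, and your symmetrization estimate $\sum_i\Delta_i\cdot v_i^{\perp}\le\frac{N}{\eta(t)}\sum_i\vnorm{v_i^{\perp}}^2$ is correct. But the gap you flag at the end is genuine and your proposed repair does not close it. The hypothesis only gives $N/\eta(t)\le\alpha(t)/\beta(t)$, while Theorem \ref{th:perpbound_convergence} requires a bound by some $\ell$ \emph{strictly} below that ratio, uniformly, as in \eqref{goodell}. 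Neither the boundedness of $\eta$ nor $\phi\le 1$ can supply this strictness: the admissible choice $\alpha\equiv\beta\equiv\gamma$ and $\eta\equiv N$ saturates $N\beta/\alpha=\eta$ exactly, and with your estimate the perturbation term in \eqref{eq:maintool} is bounded only by $2\alpha(t)V(t)$, which cancels the $-2\alpha(t)V(t)$ term completely and leaves the bare inequality of Lemma \ref{cor:trivialVdecay}, i.e.\ only the conditional, uncontrolled decay. The loss occurs in the step $v_i^{\perp}\cdot v_j^{\perp}\le\frac12(\vnorm{v_i^{\perp}}^2+\vnorm{v_j^{\perp}}^2)$, which is maximally pessimistic precisely on the zero-mean vectors $v^{\perp}$ (already for $N=2$, where $v_1^{\perp}=-v_2^{\perp}$, it replaces $-\vnorm{v_1^{\perp}}^2$ by $+\vnorm{v_1^{\perp}}^2$).

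The missing idea is to use the constraint $\sum_iv_i^{\perp}=0$ \emph{inside} the estimate rather than merely observing it. Since $\phi$ is nonincreasing one has $\phi_{ij}\le\phi(0)$, and since $\vnorm{\sum_iv_i^{\perp}}^2=0$ one may write
\begin{align*}
\sum_{i,j}\phi_{ij}\,v_i^{\perp}\cdot v_j^{\perp}=-\sum_{i,j}\bigl(\phi(0)-\phi_{ij}\bigr)v_i^{\perp}\cdot v_j^{\perp}\le\sum_i\Bigl(N\phi(0)-\sum_j\phi_{ij}\Bigr)\vnorm{v_i^{\perp}}^2,
\end{align*}
where the symmetrization is now applied to the nonnegative coefficients $\phi(0)-\phi_{ij}$. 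The diagonal contribution vanishes ($\phi_{ii}=\phi(0)$), so each row sum is at most $(N-1)\phi(0)\le N-1$ --- this is where the normalization $\phi\le 1$ and the nonnegativity of the off-diagonal entries actually enter. Hence $\sum_i\Delta_i\cdot v_i^{\perp}\le\frac{N-1}{\eta(t)}\sum_i\vnorm{v_i^{\perp}}^2$, which is \eqref{eq:smallerror} with bounding function $(N-1)/\eta(t)\le\frac{N-1}{N}\,\alpha(t)/\beta(t)$. This carries the uniform strict margin $1/N$: inserted into \eqref{eq:maintool} it gives $\frac{d}{dt}V(t)\le-\frac{2\alpha(t)}{N}V(t)$ and thus the claimed unconditional convergence to consensus.
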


\begin{remark} \label{re:corollary_phi}
A concrete example of a system for which we can apply Corollary \ref{cor:phi_consensus} is obtained by considering the common normalizing term
\begin{align*}
\eta(t) \define \max_{1\leq i \leq N}\left\{\sum^N_{j = 1} \phi(\|x_i(t)-x_j(t)\|)\right\}.
\end{align*}
which is also coherent with the asymptotic behavior of \eqref{eq:usual_choice} for $\epsilon \rightarrow 0$ and $\epsilon \rightarrow +\infty$.
\end{remark}

\begin{remark} \label{rem:functionR}
The request of positivity of the function $\phi$ cannot be removed from Corollary \ref{cor:phi_consensus}, see \cite[Remark 4]{bongini2015conditional}
\end{remark}

\subsection{Perturbations due to local averaging} \label{sec:localmeanR}

An interesting case of a system like \eqref{eq:cuckersmale_local} is the one where the local mean is given by
\begin{align}\label{eq:approxmeanR}
\overline{v}_i(t) = \frac{1}{|\Lambda_R(t,i)|} \sum_{j \in \Lambda_R(t,i)} v_j(t) \quad \text{ for every } t\geq 0,
\end{align}
where $\Lambda_R(i)$ is defined as in \eqref{eq:neighborset}. In this case, we model the situation in which each agent estimates the average velocity of the group in the extra feedback term by only counting those agents inside a ball of radius $R$ centered on him.

Simulations in Figure \ref{fig:3} illustrate the behavior of 
such configuration. From an uncontrolled system, represented by a local feedback radius $R=0$, by increasing this quantity, partial alignment is consistently achieved, until  full consensus is observed for large radii mimicking a total information feedback control.

\begin{figure}[!ht]
\centering
\includegraphics[width = 0.49\textwidth]{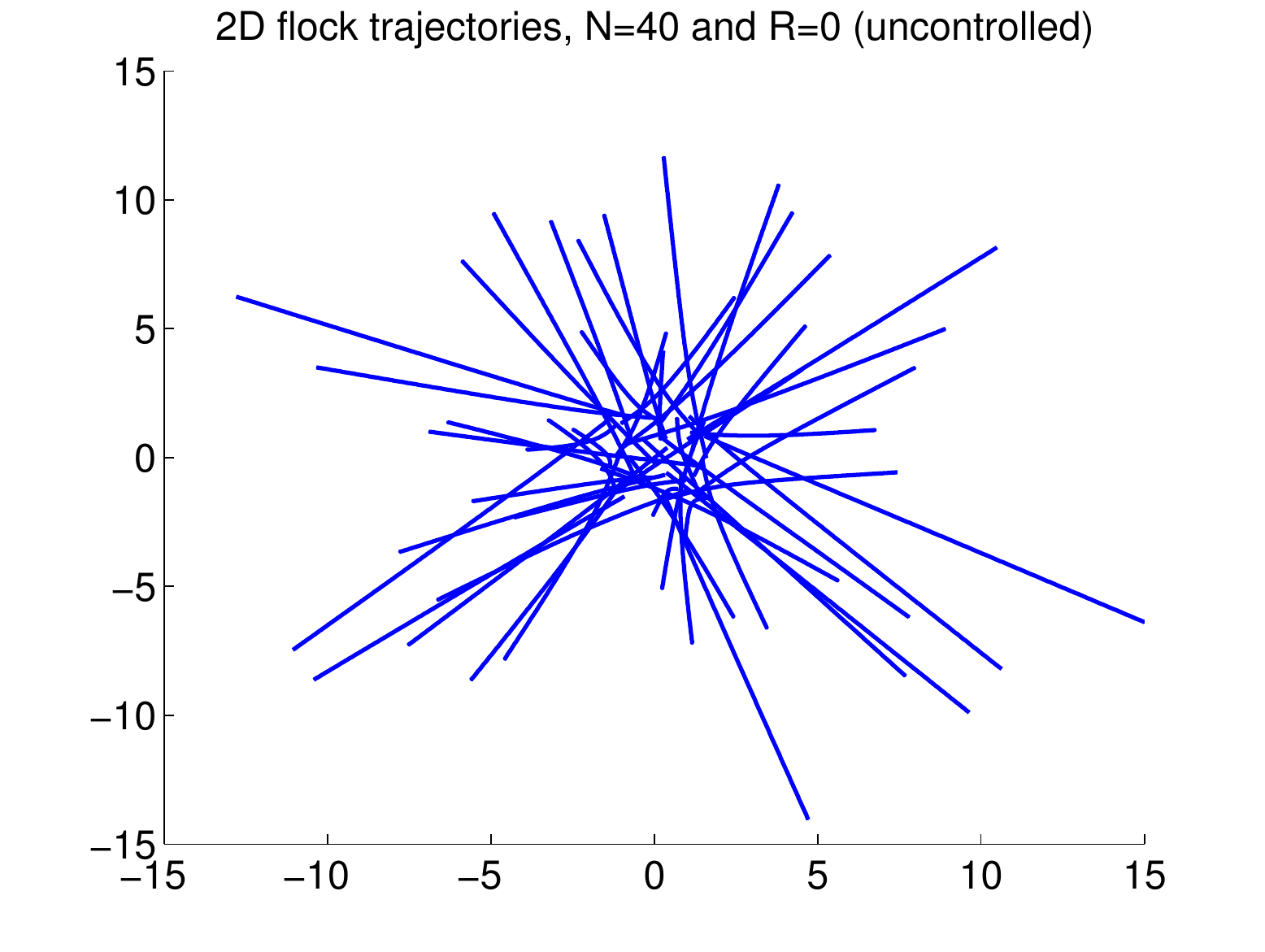}
\includegraphics[width = 0.49\textwidth]{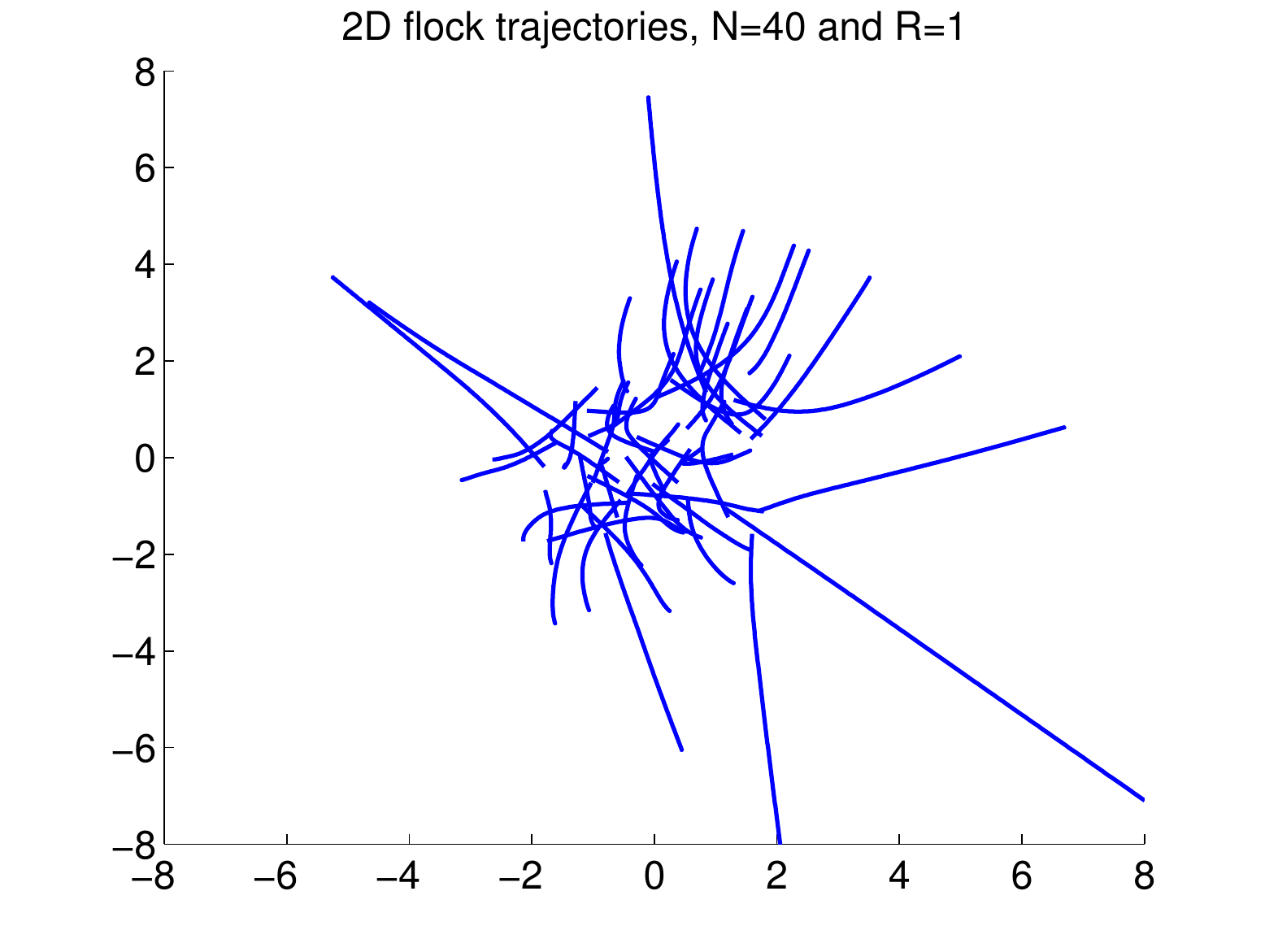}\\
\includegraphics[width = 0.49\textwidth]{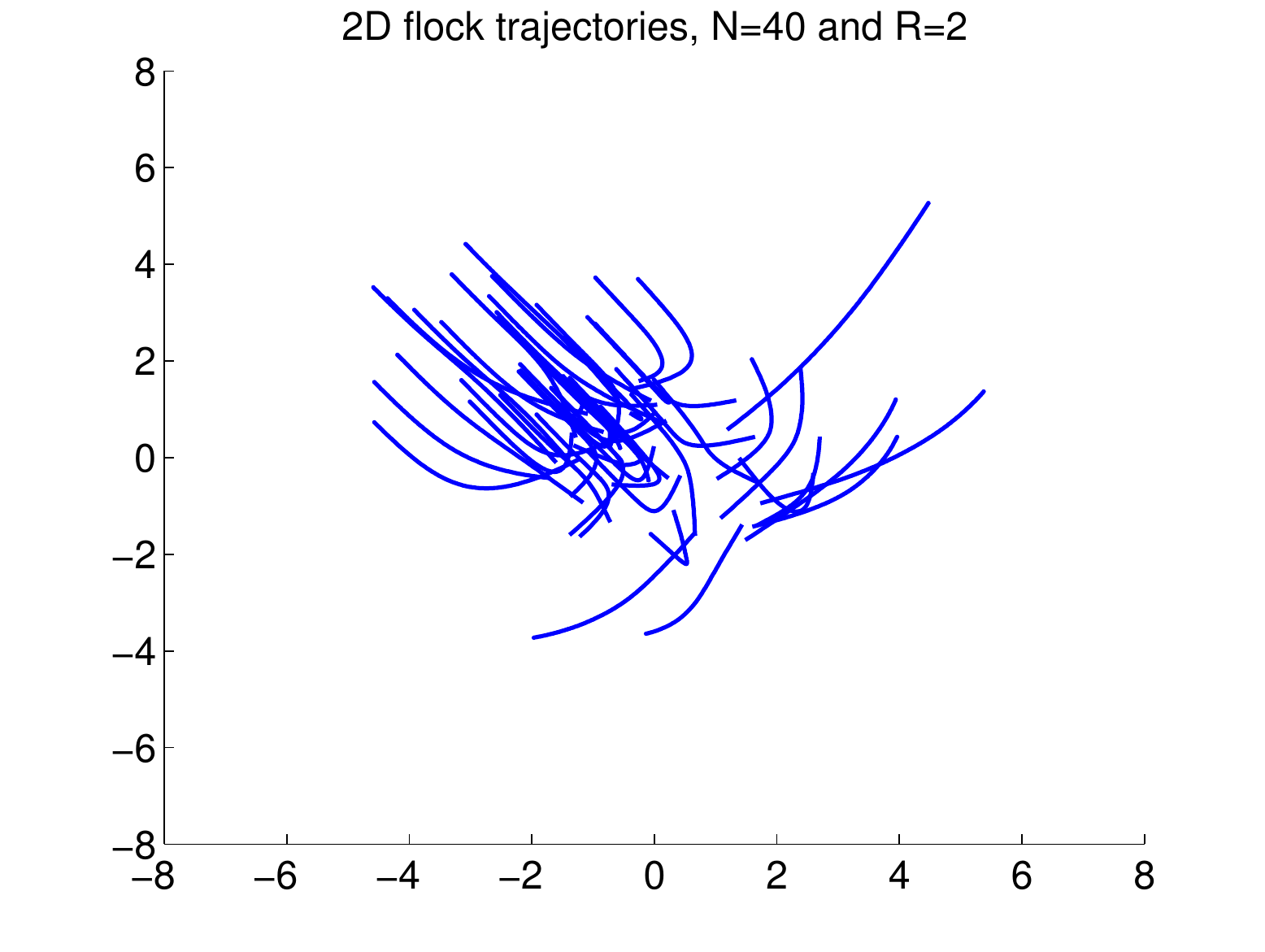}
\includegraphics[width = 0.49\textwidth]{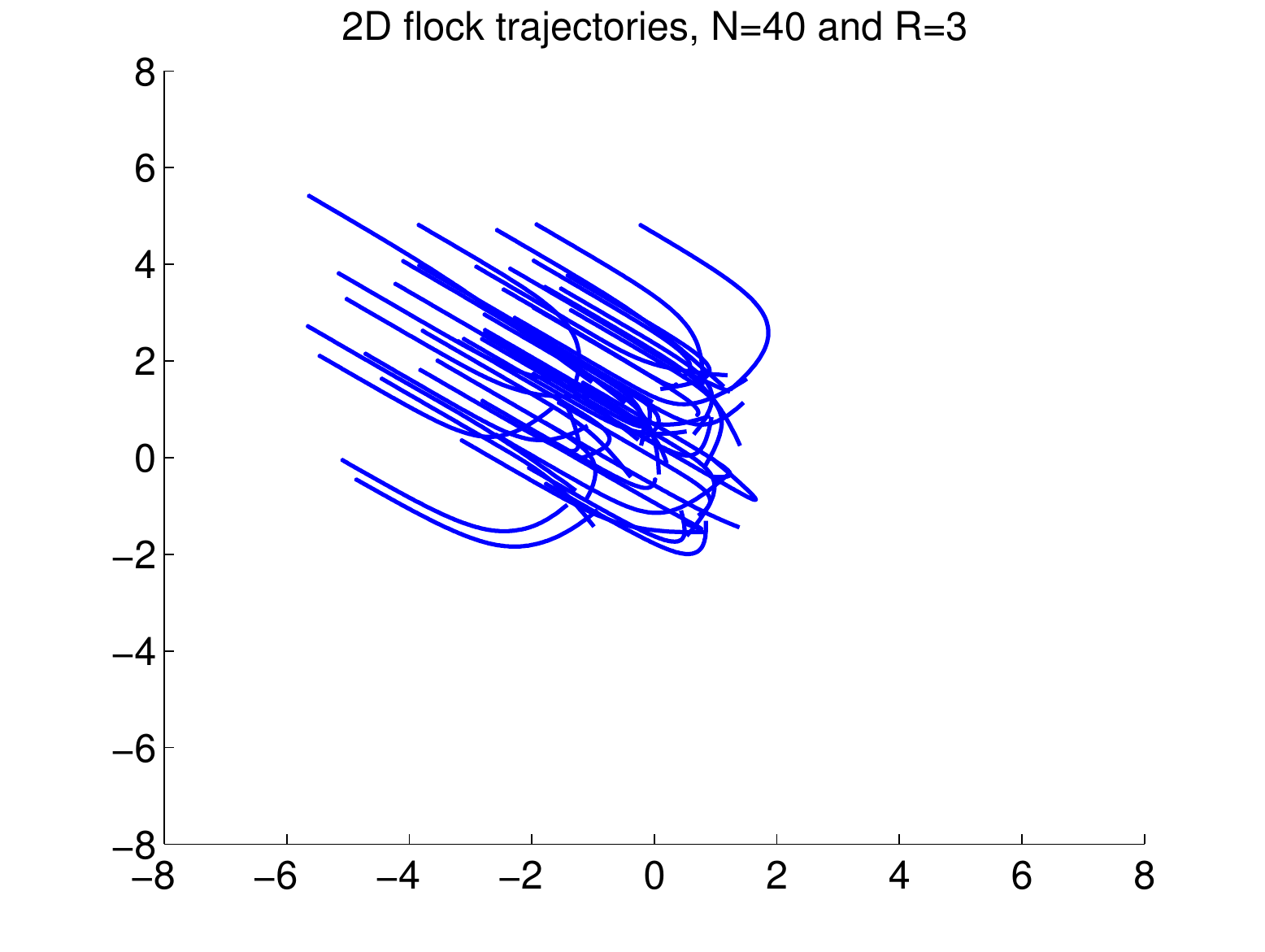}\\
\includegraphics[width = 0.49\textwidth]{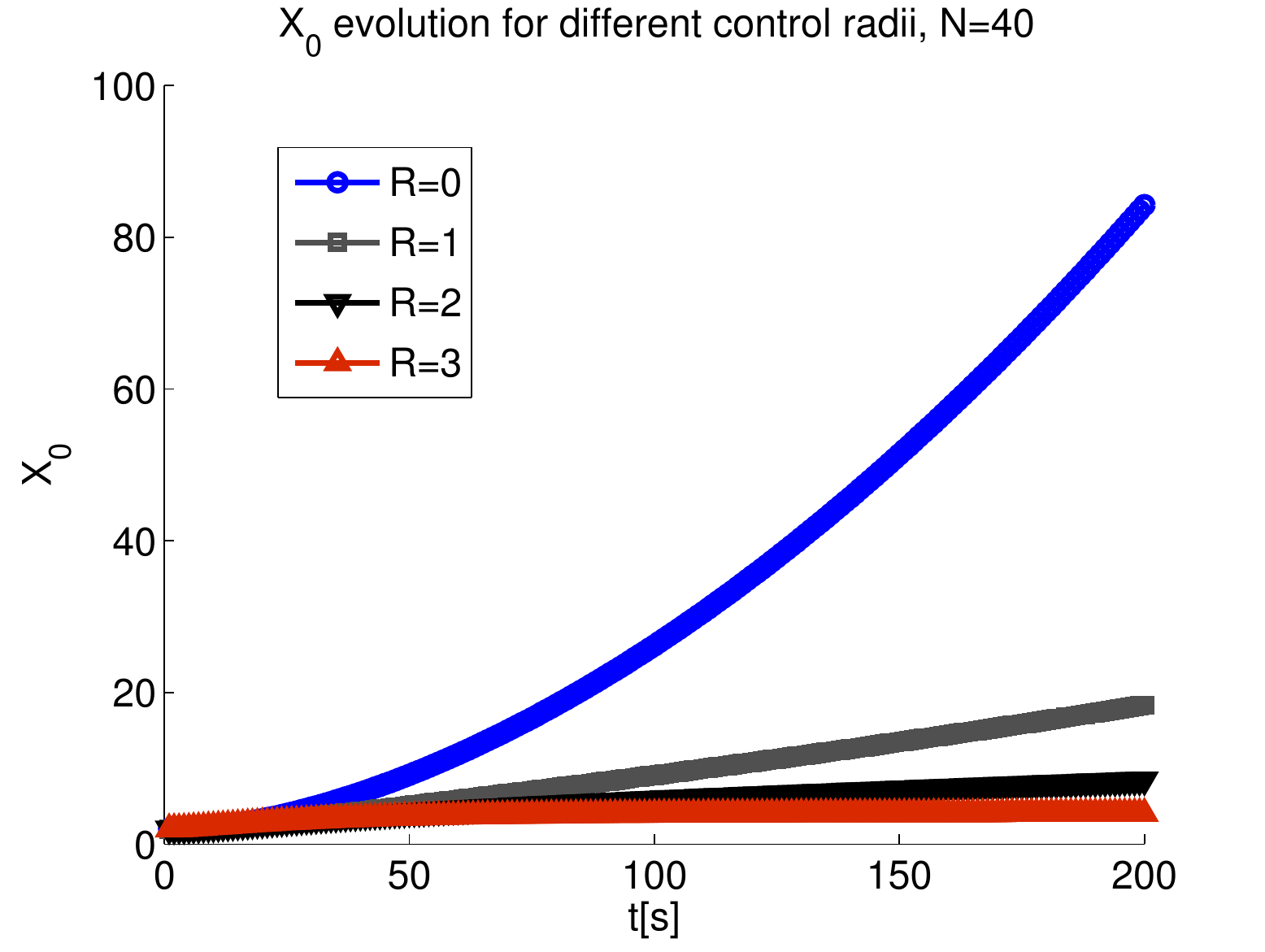}
\includegraphics[width = 0.49\textwidth]{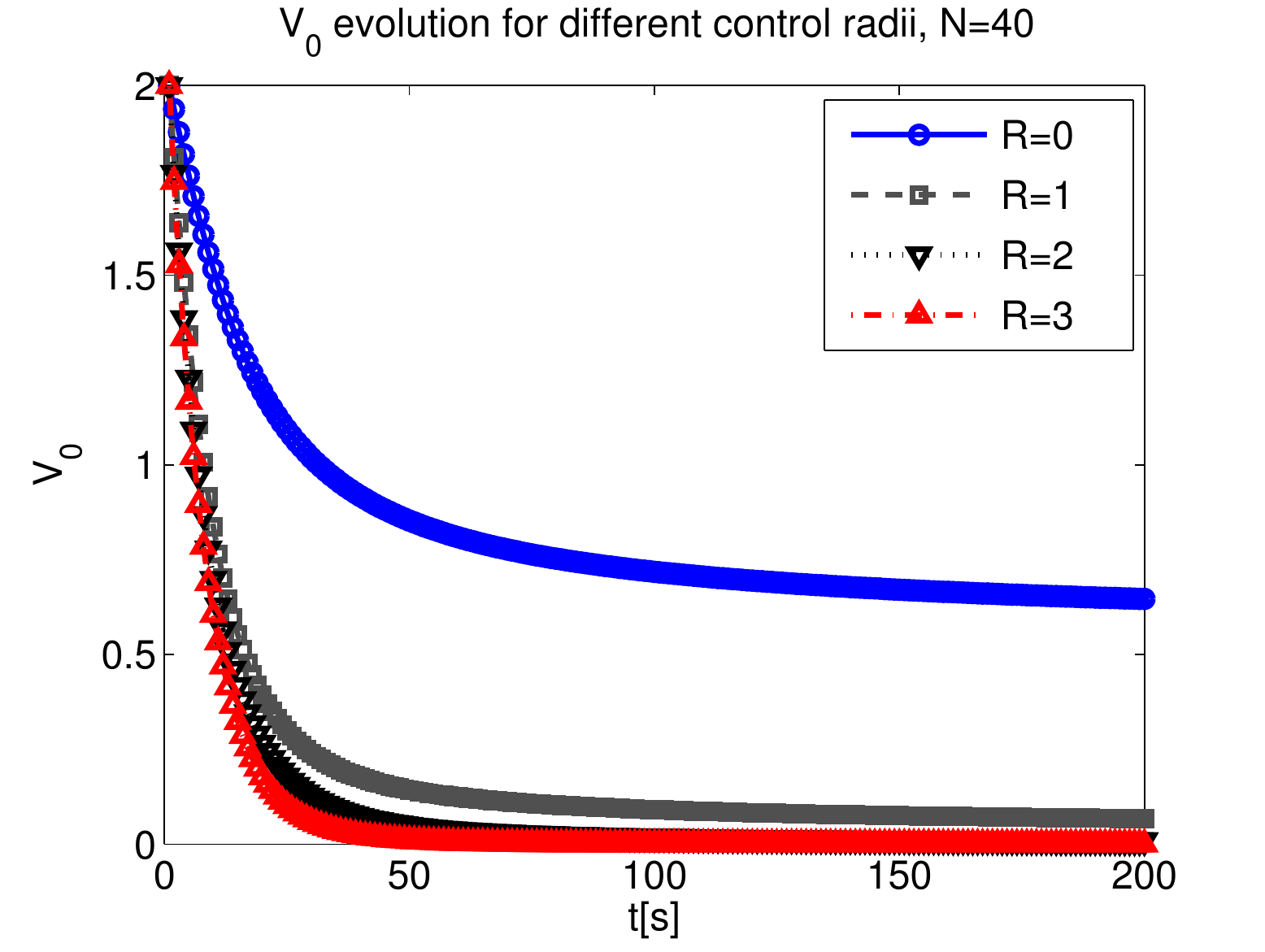}
\caption{Local feedback control. Simulations with $N=40$ agents, and  different control radii $R$. By increasing the value of $R$ the systems transits from uncontrolled behavior, to partial alignment, up to total, fast alignment.}
\label{fig:3}
\end{figure}

We want to address the issue of characterizing the behavior of system \eqref{eq:cuckersmale_local} with the above choice for $\overline{v}_i$ when the radius $R$ of each ball is either reduced to $0$ or set to grow to $+\infty$. We shall see that we can reformulate  this decentralized system again as a Cucker-Smale model for a different interaction function for which we can apply Theorem \ref{thm:hhk}. We shall show how tuning the radius $R$ affects the convergence to consensus, from the case $R \geq 0$ where only conditional convergence is ensured, to the unconditional convergence result given for $R = +\infty$.

\subsubsection{Preserving the asymptotics}

First of all, by means of $\chi_{[0,R]}$ we can rewrite $\overline{v}_i(t)$ as
\begin{align} \label{eq:truncated_perturbation}
\overline{v}_i(t) = \frac{1}{\sum^N_{k = 1} \chi_{[0,R]}(\|x_i(t) - x_k(t)\|)} \sum_{j = 1}^N \chi_{[0,R]}(\|x_i(t) - x_j(t)\|) v_j(t).
\end{align}
Unfortunately, the normalizing terms $\sum^N_{k = 1} \chi_{[0,R]}(\|x_i(t) - x_k(t)\|)$ give rise to a matrix of weights which is not symmetric, which greatly complicates the analysis of the convergence to consensus. However, since 
we are mainly interested in the limit behavior of the system for $R\rightarrow 0$ and $R\rightarrow+\infty$, following Remark \ref{re:corollary_phi} we take $\eta_R$ to be a function approximating the above normalizing terms and which also preserves its asymptotics for $R \rightarrow 0$ and $R \rightarrow +\infty$, as for instance,
\begin{align} \label{eq:normalizingR}
\eta_R(t) = \max_{1\leq i\leq N} \left\{\sum^N_{k = 1} \chi_{[0,R]}(\|x_i(t) - x_k(t)\|) \right\}.
\end{align}
Therefore, we replace the vector $\overline{v}_i(t)$ by
\begin{align*}
\frac{1}{\eta_{R}(t)} \sum^N_{j  = 1} \chi_{[0,R]}(\|x_i(t) - x_j(t)\|) v_j(t)\,.
\end{align*}
On top of this, notice that the vector
\begin{align*}
\left(\frac{1}{\eta_{R}(t)} \sum^N_{j  = 1} \chi_{[0,R]}(\|x_i(t) - x_j(t)\|) \right)v_i(t)
\end{align*}
is an approximation of $v_i(t)$ for $R \rightarrow 0$ and $R \rightarrow +\infty$. This motivates the replacement of the term $\overline{v}_i - v_i$ where $\overline{v}_i$ is as in \eqref{eq:truncated_perturbation} with
\begin{align}\label{eq:repv}
\overline{v}_i - v_i \approx \frac{1}{\eta_{R}} \sum^N_{j  = 1} \chi_{[0,R]}(\|x_i - x_j\|)v_j - \left(\frac{1}{\eta_{R}}  \sum^N_{j  = 1} \chi_{[0,R]}(\|x_i - x_j\|) \right)v_i.
\end{align}
The term \eqref{eq:repv} can be rewritten as $1/\eta_{R}  \sum^N_{j  = 1} \chi_{[0,R]}(\|x_i - x_j\|) (v_j - v_i)$, which can be further simplified as follows
\begin{align}
\begin{split} \label{eq:derivationR}
\frac{1}{\eta_{R}}  \sum^N_{j  = 1} \chi_{[0,R]}(r_{ij}) (v_j - v_i) &= \frac{1}{\eta_{R}}  \sum^N_{i = 1} (v_j - v_i) + \frac{1}{\eta_{R}}  \sum^N_{j  = 1} (1 - \chi_{[0,R]}(r_{ij})) (v_i - v_j) \\
& = \frac{N}{\eta_{R}} \left(\overline{v} - v_i\right)  + \frac{1}{\eta_{R}}  \sum^N_{j  = 1} (1 - \chi_{[0,R]}(r_{ij})) (v_i - v_j),
\end{split}
\end{align}
where we have written $r_{ij}$ in place of $\|x_i - x_j\|$ and removed the time dependencies for the sake of compactness.

It is clear that the choice of the function $\chi_{[0,R]}$ is arbitrary and other alternatives can be selected, provided they give a coherent approximation of the local average \eqref{eq:approxmeanR}. For instance, instead of $\chi_{[0,R]}$ and $\eta_R$, we can consider two generic functions $\psi_{\varepsilon}$ and $\eta_{\varepsilon}$, where $\varepsilon$ is a parameter ranging in a nonempty set $\Omega$, satisfying the following properties:
\begin{enumerate}[label=$(\roman*)$]
\item \label{item:i} $\psi_{\varepsilon}:\R_+ \funarrow [0,1]$ is a nonincreasing measurable function for every $\varepsilon \in \Omega$;
\item \label{item:ii} $\eta_{\varepsilon}\in L^{\infty}(\R_+)$ 
for every $\varepsilon \in \Omega$;
\item \label{item:iii} there are two disjoint subsets $\Omega_{C\!S}$ and $\Omega_{U}$ of $\Omega$ such that
\begin{itemize}
\item if $\varepsilon \in \Omega_{C\!S}$ then $\psi_{\varepsilon} = \chi_{\{0\}}$ and $\eta_{\varepsilon} \equiv 1$;
\item if $\varepsilon \in \Omega_{U}$ then $\psi_{\varepsilon} = \chi_{\R_+}$ and $\eta_{\varepsilon} \equiv N$.
\end{itemize}
\end{enumerate}
Under the above hypotheses, we consider the perturbation given for every $t\geq 0$ by
\begin{align} \label{eq:pert_repulsion}
\Delta^{\varepsilon}_i(t) \define \frac{1}{\eta_{\varepsilon}(t)} \sum^N_{j = 1} (1 - \psi_{\varepsilon}(\|x_i(t) - x_j(t)\|)) (v_i(t) - v_j(t)).
\end{align}
With requirement \ref{item:iii} we impose that whenever $\varepsilon \in \Omega_{C\!S}$ then it holds $$\Delta_i^{\varepsilon}(t) = - \frac{N}{\eta_{\varepsilon}(t)} (\overline{v}(t) - v_i(t)),$$ therefore recovering the Cucker-Smale system \eqref{eq:cuckersmale} from \eqref{eq:cuckersmale_R}, while whenever $\varepsilon \in \Omega_{U}$ then $\Delta_i^{\varepsilon}(t) = 0$ holds, and we obtain a particular instance of system \eqref{eq:cuckersmale_uniform}.

\subsubsection{The enlarged consensus region}

By means of \eqref{eq:derivationR} and \eqref{eq:pert_repulsion}, we can rewrite our system with the local average \eqref{eq:approxmeanR} in the form of system \eqref{eq:cuckersmale_perturbed} as follows
\begin{align}
\left\{
\begin{aligned}
\begin{split} \label{eq:cuckersmale_R}
\dot{x}_{i}(t) & = v_{i}(t), \\
\dot{v}_{i}(t) & = \frac{1}{N} \sum_{j = 1}^N a\left(\|x_i(t) - x_j(t)\|\right)\left(v_{j}(t)-v_{i}(t)\right) + \gamma \frac{N}{\eta_{\varepsilon}(t)} (\overline{v}(t) - v_i(t)) + \gamma \Delta^{\varepsilon}_i(t).
\end{split}
\end{aligned}
\right.
\end{align}
Using \eqref{eq:derivationR} and collecting the term $(v_j - v_i)$, it is easy to see that Theorem \ref{thm:hhk} yields the following description of the consensus region as a function of the parameter $\eps$.

\begin{theorem} \label{th:HaHaKimExtended}
Fix $\gamma \geq 0$, consider system \eqref{eq:cuckersmale_R} where $\Delta^{\varepsilon}_i$ is as in \eqref{eq:pert_repulsion} and let $(x^0,v^0) \in \R^{dN} \times \R^{dN}$. If $X_0 \define B(x^0, x^0)$ and $V_0 \define B(v^0,v^0)$ satisfy
\begin{align} \label{eq:HaHaKimLarge}
\int^{+\infty}_{\sqrt{X_0}} a\left(\sqrt{2N}r\right) \ dr + \frac{\gamma N}{\vnorm{\eta_{\varepsilon}}_{L^{\infty}(\R_+)}} \int^{+\infty}_{\sqrt{X_0}} \psi_{\varepsilon}\left(\sqrt{2N}r\right) \ dr \geq \sqrt{V_0},
\end{align}
then the solution of system \eqref{eq:cuckersmale_R} with initial datum $(x^0,v^0)$ tends to consensus.
\end{theorem}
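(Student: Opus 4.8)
The plan is to recognize that, after one algebraic simplification, system \eqref{eq:cuckersmale_R} is nothing but an ordinary Cucker--Smale system \eqref{eq:cuckersmale} driven by a single symmetric (but time-dependent) interaction kernel, and then to reduce the statement to Theorem \ref{thm:hhk} applied to a suitable time-independent minorant of that kernel. First I would feed the identity \eqref{eq:derivationR}, written with $\psi_\varepsilon,\eta_\varepsilon$ in place of $\chi_{[0,R]},\eta_R$, together with the definition \eqref{eq:pert_repulsion} of $\Delta_i^\varepsilon$, into the two control terms of \eqref{eq:cuckersmale_R}, obtaining
\begin{align*}
\gamma\frac{N}{\eta_\varepsilon(t)}\big(\overline v(t)-v_i(t)\big)+\gamma\Delta_i^\varepsilon(t)=\frac{\gamma}{\eta_\varepsilon(t)}\sum_{j=1}^N\psi_\varepsilon\big(\|x_i(t)-x_j(t)\|\big)\big(v_j(t)-v_i(t)\big).
\end{align*}
Substituting this back shows that $\dot v_i=\sum_{j}g_{ij}(t)(v_j-v_i)$ with
\begin{align*}
g_{ij}(t)=\frac1N a\big(\|x_i(t)-x_j(t)\|\big)+\frac{\gamma}{\eta_\varepsilon(t)}\psi_\varepsilon\big(\|x_i(t)-x_j(t)\|\big).
\end{align*}
Because $\|x_i-x_j\|=\|x_j-x_i\|$ and $\eta_\varepsilon$ is a common factor, the weights are symmetric, so $\overline v$ is conserved and $\dot v_i^\perp=\dot v_i$ holds throughout.

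Next I would establish a Lyapunov inequality for $V$ of exactly the form \eqref{eq:Vdecayvanilla}. Differentiating $V=B(v,v)=B(v^\perp,v^\perp)$ as in the proof of Lemma \ref{cor:trivialVdecay} and symmetrizing the double sum via $g_{ij}=g_{ji}$ gives $\frac{d}{dt}V(t)=-\frac1N\sum_{i,j}g_{ij}(t)\|v_i^\perp-v_j^\perp\|^2$. This is the delicate point, and where the hypotheses enter: the weights depend on $t$ through $\eta_\varepsilon$, so to obtain an autonomous bound I would minorize them uniformly, using $\eta_\varepsilon(t)\le\|\eta_\varepsilon\|_{L^\infty(\R_+)}$ (property \ref{item:ii}), the monotonicity of $a$ and of $\psi_\varepsilon$ (property \ref{item:i}), and the standard bound $\|x_i-x_j\|\le\sqrt{2NX(t)}$. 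This yields $g_{ij}(t)\ge\frac1N\tilde a(\sqrt{2NX(t)})$ with
\begin{align*}
\tilde a(r):=a(r)+\frac{\gamma N}{\|\eta_\varepsilon\|_{L^\infty(\R_+)}}\,\psi_\varepsilon(r),
\end{align*}
a genuine nonnegative, nonincreasing kernel. Since $\frac{1}{2N^2}\sum_{i,j}\|v_i^\perp-v_j^\perp\|^2=V$, this produces $\frac{d}{dt}V(t)\le-2\tilde a(\sqrt{2NX(t)})V(t)$, the exact analogue of \eqref{eq:Vdecayvanilla} for $\tilde a$.

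Finally I would close the argument by re-running the proof of Theorem \ref{thm:hhk} with $\tilde a$ replacing $a$. That proof uses only two ingredients: the elementary estimate $\frac{d}{dt}\sqrt{X}\le\sqrt{V}$, which follows from $\dot x=v$ and Cauchy--Schwarz for $B$ and therefore holds here verbatim, and the decay $\frac{d}{dt}V\le-2\tilde a(\sqrt{2NX})V$ just obtained. Hence the system tends to consensus whenever $\int_{\sqrt{X_0}}^{+\infty}\tilde a(\sqrt{2N}\,r)\,dr\ge\sqrt{V_0}$; expanding $\tilde a$ and splitting the integral reproduces precisely condition \eqref{eq:HaHaKimLarge}, which is the claim. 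The step I expect to be the main obstacle is the minorization in the second paragraph: one must check that passing from the time-dependent weights $g_{ij}(t)$ to the single autonomous lower bound $\frac1N\tilde a(\sqrt{2NX})$ is legitimate inside the Lyapunov sum without degrading the rate. This works because every summand $\|v_i^\perp-v_j^\perp\|^2$ is nonnegative and every $g_{ij}(t)$ is bounded below by the \emph{same} quantity, but it is exactly here that the monotonicity assumption \ref{item:i} and the $L^\infty$ bound \ref{item:ii} are indispensable, and where one should verify that $\tilde a$ inherits monotonicity even when $\psi_\varepsilon$ is merely measurable and possibly discontinuous.
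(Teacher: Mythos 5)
Your proposal is correct and follows essentially the same route as the paper, which likewise uses \eqref{eq:derivationR} to collect the control terms into a single symmetric kernel $\frac1N a(\|x_i-x_j\|)+\frac{\gamma}{\eta_\varepsilon(t)}\psi_\varepsilon(\|x_i-x_j\|)$ and then invokes Theorem \ref{thm:hhk} for the resulting augmented Cucker--Smale system. Your extra care in minorizing the time-dependent weight by $\frac1N\tilde a(\sqrt{2NX(t)})$ with $\tilde a = a + \gamma N\psi_\varepsilon/\vnorm{\eta_\varepsilon}_{L^{\infty}(\R_+)}$, and in re-running rather than merely citing the proof of Theorem \ref{thm:hhk} (since $\tilde a$ need not be strictly positive or Lipschitz), correctly fills in the details the paper leaves implicit.
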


Let us see how we can apply Theorem \ref{th:HaHaKimExtended} to obtain an estimate of the consensus region for the local average \eqref{eq:approxmeanR}.
We consider $\Omega = [0, +\infty]$, the sequence of functions $(\chi_{[0,R]})_{R \in \Omega}$ and $\eta_R$ as in \eqref{eq:normalizingR} (notice that, as before, we have $\Omega_{C\!S} = \{0\}$ and $\Omega_{U} = \{\infty\}$). Since it holds $\vnorm{\eta_R}_{L^{\infty}(\R_+)} \leq N$, if $R$ is sufficiently large to satisfy $\sqrt{2NX_0} \leq R$, condition \eqref{eq:HaHaKimLarge} is satisfied as soon as
\begin{align*}
\int^{+\infty}_{\sqrt{X_0}} a\left(\sqrt{2N}r\right) \ dr + \gamma \left(\frac{R}{\sqrt{2N}} - \sqrt{X_0}\right) \geq \sqrt{V_0},
\end{align*}
by means of a trivial integration. If, instead, $R$ is so small that $\sqrt{2NX_0} > R$ holds, condition \eqref{eq:HaHaKimLarge} is satisfied as soon as
\begin{align*}
\int^{+\infty}_{\sqrt{X_0}} a\left(\sqrt{2N}r\right) \ dr \geq \sqrt{V_0},
\end{align*}
recovering Theorem \ref{thm:hhk}. As can be seen, we have enlarged the original consensus region provided by Theorem \ref{thm:hhk} by a term whose size is linearly increasing in $R$. This implies that, in the case $R = +\infty$, the consensus region coincides with the entire space $\R^{dN}\times\R^{dN}$, hence the system converges to consensus regardless of the initial datum.

\subsubsection{Empirical estimation of the enlarged consensus region}\label{sim:local}
We present a series of numerical tests aiming at estimating empirically the enlarged consensus region given by \eqref{eq:HaHaKimLarge} 
following similar ideas as those presented in \cite{cafotove10}. We consider a system of $N$ agents in dimension $d=2$ with a randomly generated initial configuration of positions and velocities \[(x^0,v^0)\in[-1,1]^{2N}\times[-1,1]^{2N}\,,\]
interacting by means of the kernel \eqref{eq:cuckerkernel} with $H = 1$, $\sigma = 1$ and $\beta=1$.
We recall that relevant quantities for the analysis of our results are given by (here we stress the dependance on $x$ and $v$)
\begin{align*}
X[x](t)\define\frac{1}{2N^2}\sum_{i,j=1}^N\|x_i(t)-x_j(t)\|^2\quad\text{and}\quad V[v](t)\define\frac{1}{2N^2}\sum_{i,j=1}^N\|v_i(t)-v_j(t)\|^2\,.
\end{align*}
Notice that, once a random initial configuration has been generated, it is possible to rescale it to a desired $(X_0,V_0)$ parametric pair, by means of
\[
(x,v)=\left(\sqrt{\frac{X_0}{X[\tilde{x}]}}\tilde{x}, \sqrt{\frac{V_0}{V[\tilde{v}]}} \tilde{v}\right)\,,
\]
such that $(X[x],V[v])=(X_0,V_0)$. As simulations of the trajectories have been generated by prescribing a value for the pair $(X_0,V_0)$, which is used to rescale randomly generated initial conditions, there are slight variations on the initial positions and velocities in every model run, which can affect the final consensus direction. However our results are stated in terms of $X,V$, and independently of the specific initial configuration. For simulation purposes  the system is integrated in time with the specific feedback control by means of a Runge-Kutta 4th-order scheme.


As it was shown in Example \ref{ex:notsharp}, 
that estimates for consensus regions such as the one provided by Theorem \ref{thm:hhk}, are not sharp in many situations. In this direction, we proceed to contrast the theoretical consensus estimates with the numerical evidence. For this purpose, for a fixed number of agents, we span a large set of possible initial configurations determined by different values of $(X_0,V_0)$. For every pair $(X_0,V_0)$ we randomly generate a set of 20 initial conditions, and we simulate for a sufficiently large time frame. We measure consensus according to a threshold established on the final value of $V$; we consider that consensus has been achieved if the final value of $V$ is lower or equal to $1e-5$. We proceed by computing empirical probabilities of consensus for every point of our state space $(X_0,V_0)$;  results in this direction are presented in Figures \ref{fig:4} and \ref{fig:5}. We first consider the simplified case of 2 agents; according to Example \ref{ex:notsharp}, for this particular case, the consensus region estimate provided by Theorem \ref{thm:hhk} is sharp, as illustrated by the results presented in Figure \ref{fig:4}. Furthermore, it is also the case for Theorem \ref{th:HaHaKimExtended}; for $R>0$, the predicted consensus region coincides with the numerically observed ones.

\begin{figure}[!ht]
\centering
\includegraphics[width = 0.49\textwidth]{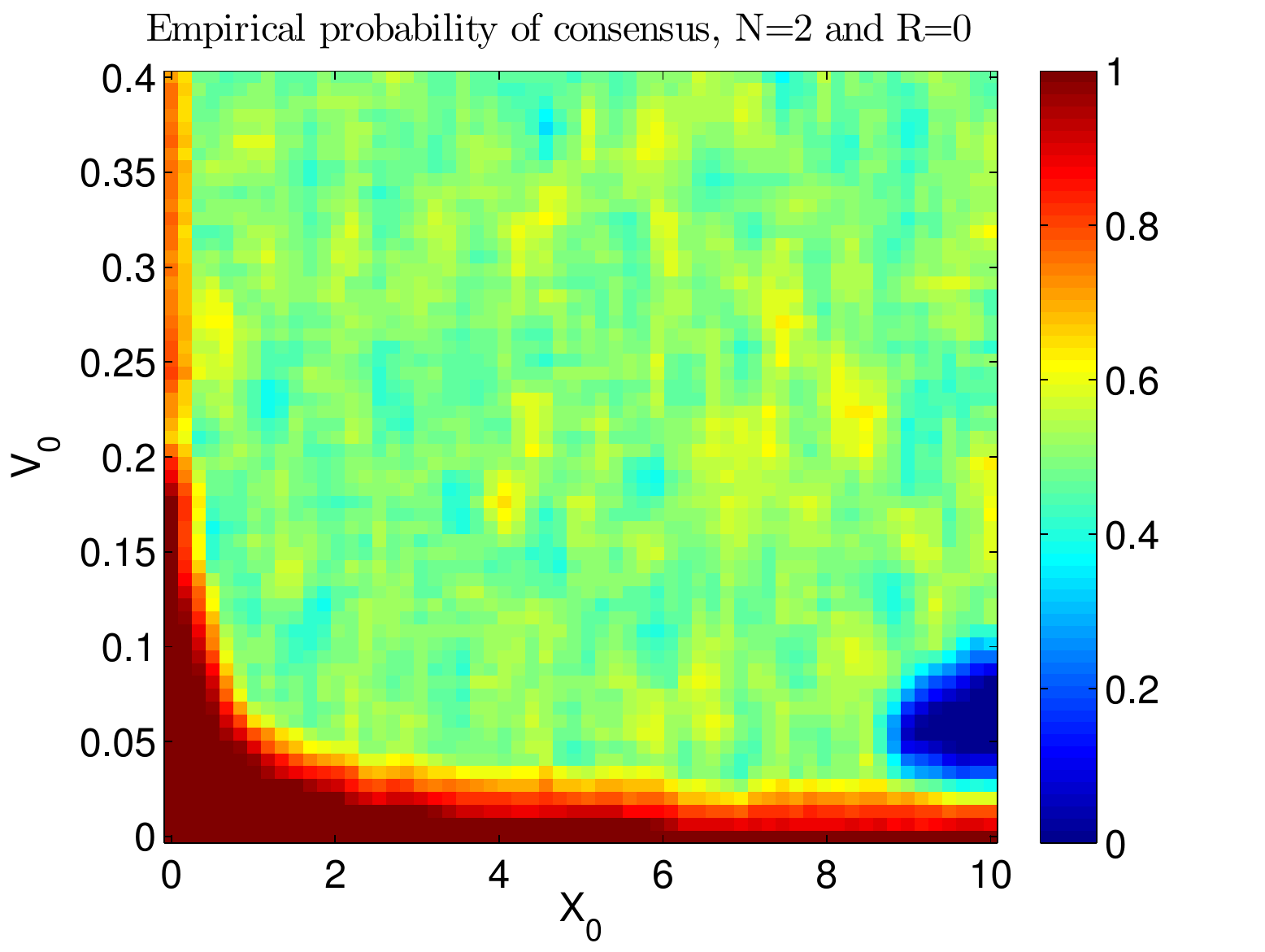}
\includegraphics[width = 0.49\textwidth]{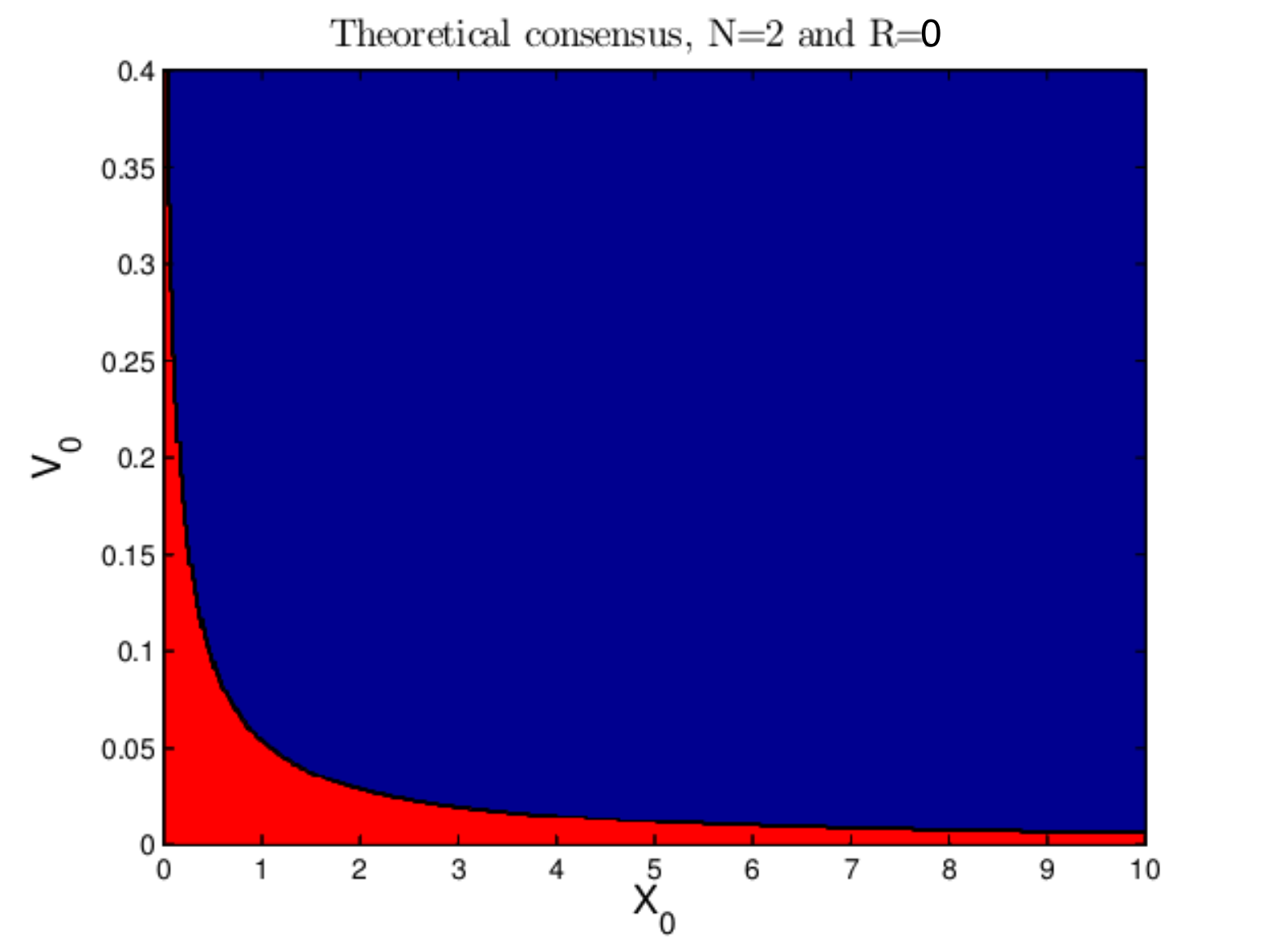}\\
\includegraphics[width = 0.49\textwidth]{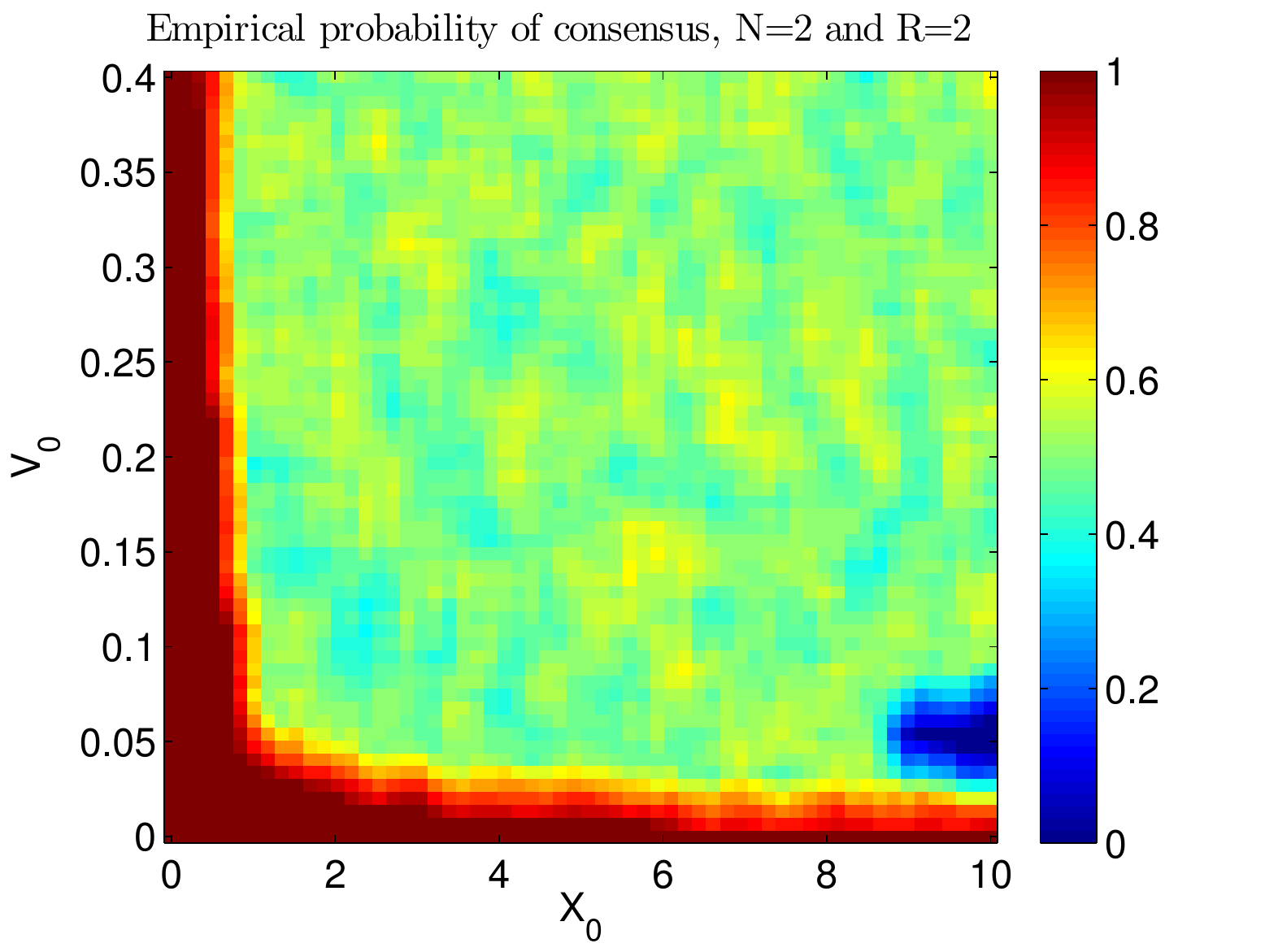}
\includegraphics[width = 0.49\textwidth]{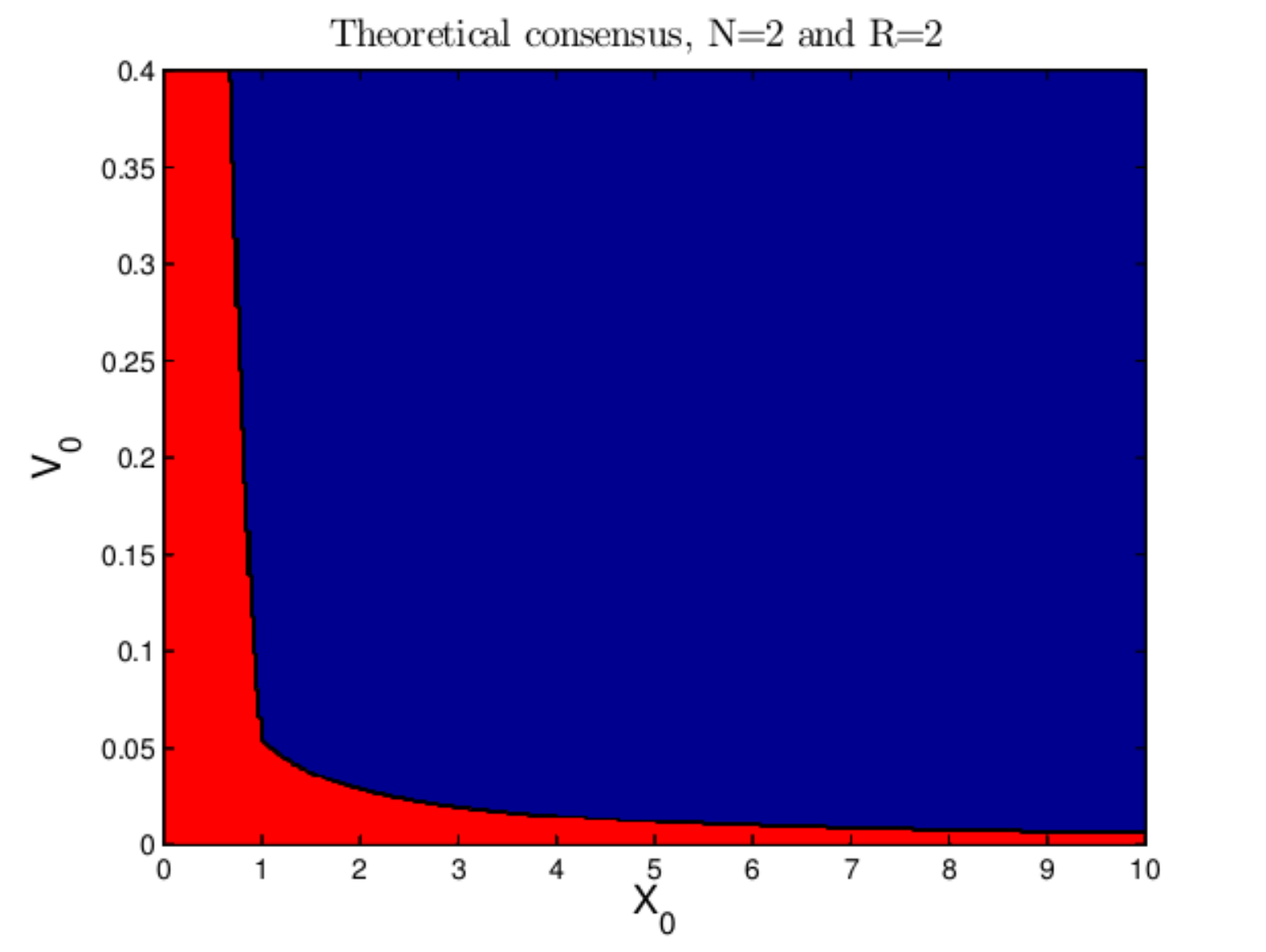}
\caption{Local feedback control. Empirical consensus regions and theoretical estimates for two-agent systems.}
\label{fig:4}
\end{figure}

Figure \ref{fig:5} illustrates the case when a larger number of agents is considered. In a similar way as for Theorem \ref{thm:hhk}, the consensus region estimate is conservative if compared with the region where numerical experiments exhibit convergent behavior. Nevertheless, Theorem \ref{th:HaHaKimExtended} is consistent in the sense that the theoretical consensus region increases gradually as $R$ grows, eventually covering any initial configuration, which is the case of the total information feedback control, as presented in \cite[Proposition 2]{caponigro2015sparse}. The numerical experiments also confirm this phenomena, as shown in Figure \ref{fig:6}, where contour lines showing the $80\%$ probability of consensus for different radii locate farther from the origin as $R$ increases.

\begin{figure}[!ht]
\centering
\includegraphics[width = 0.49\textwidth]{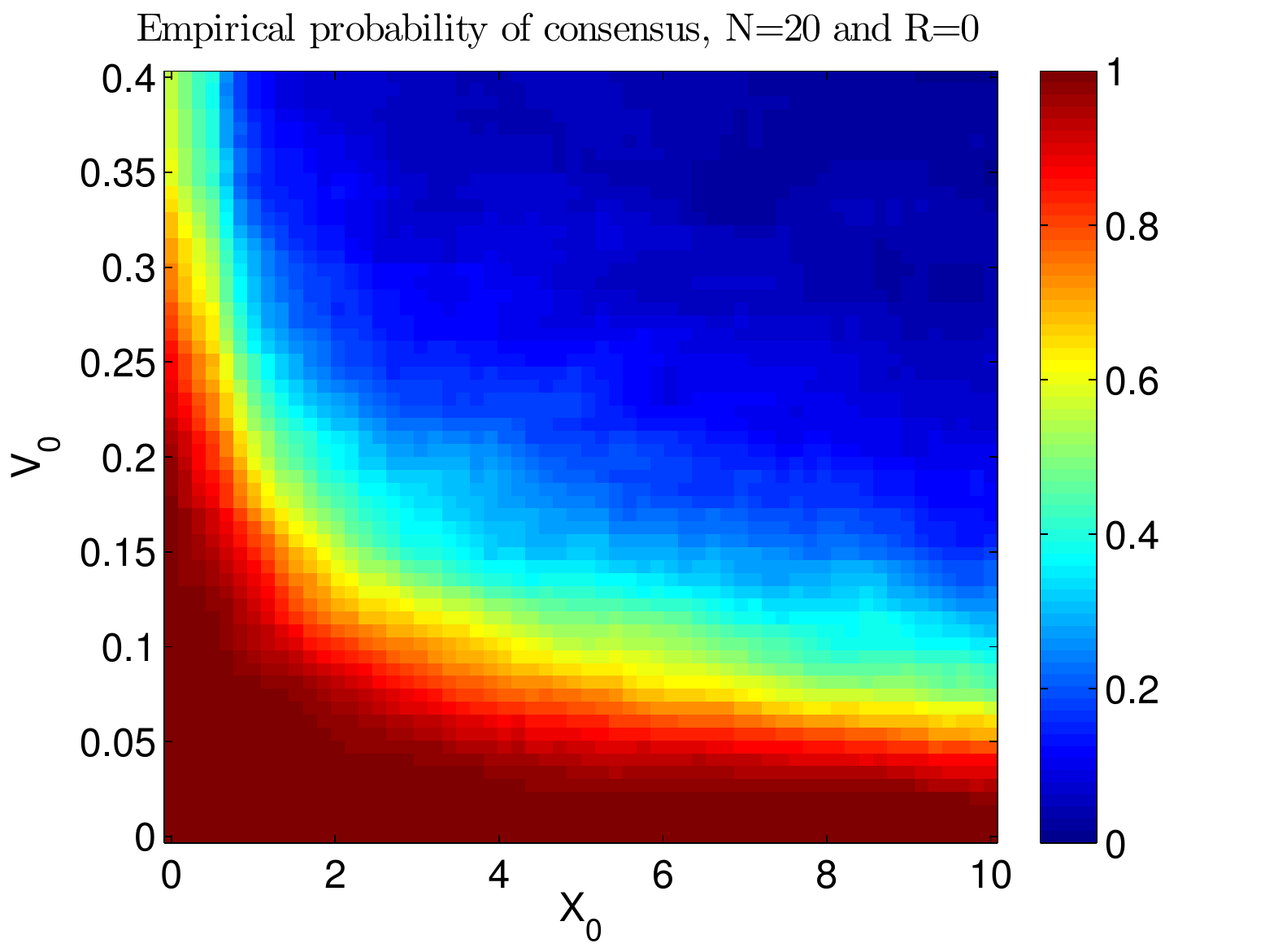}
\includegraphics[width = 0.49\textwidth]{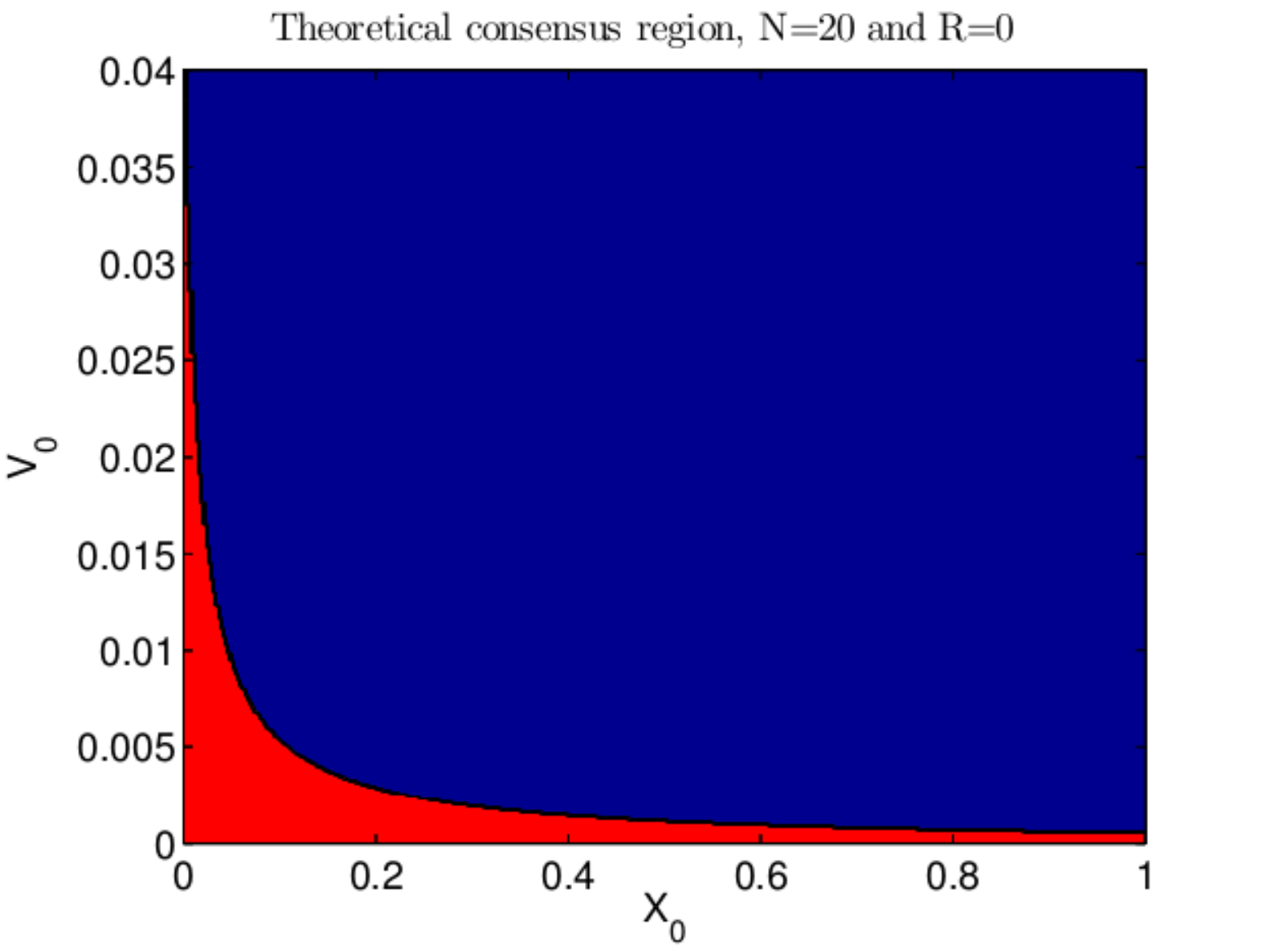}\\
\includegraphics[width = 0.49\textwidth]{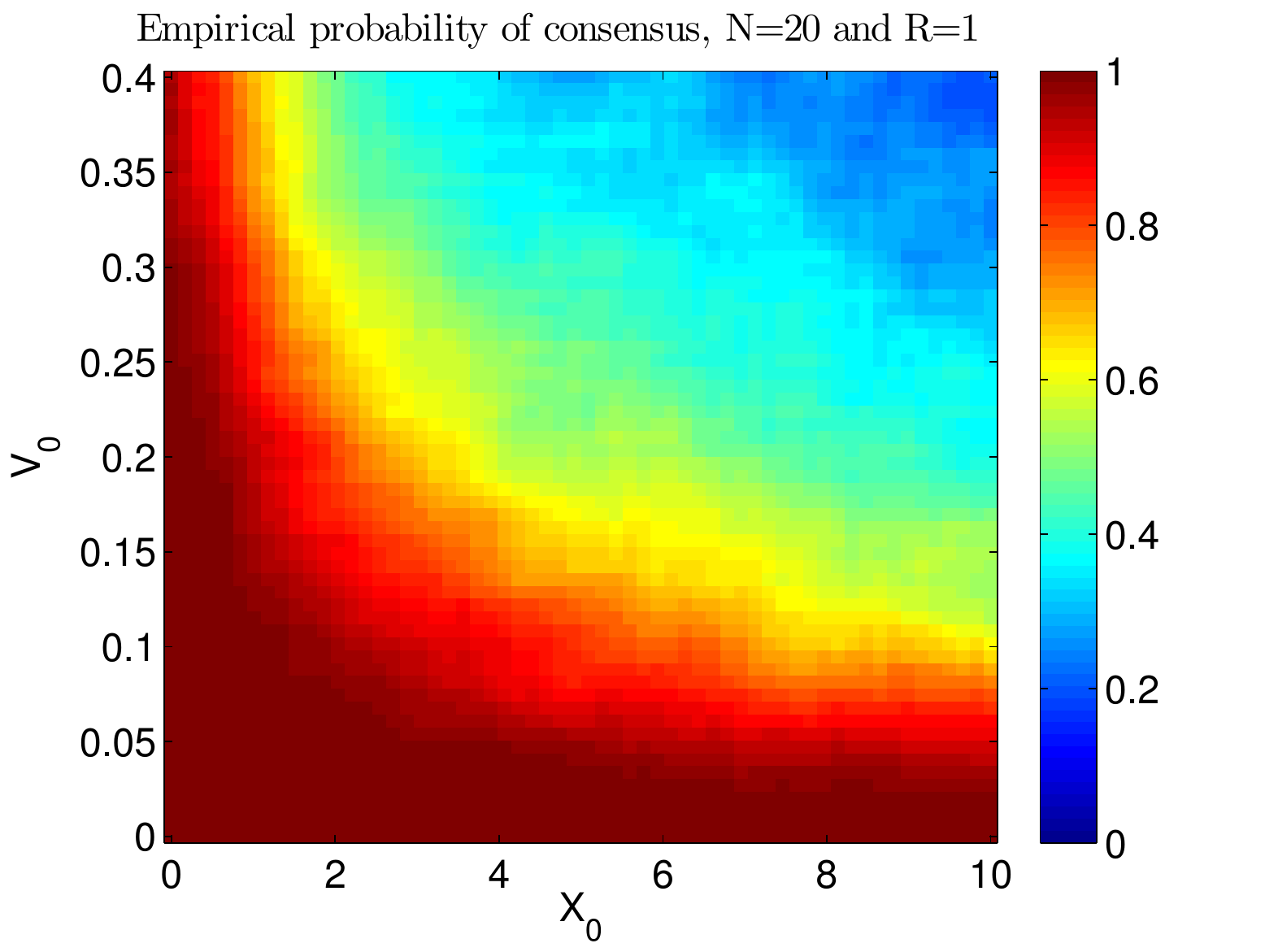}
\includegraphics[width = 0.49\textwidth]{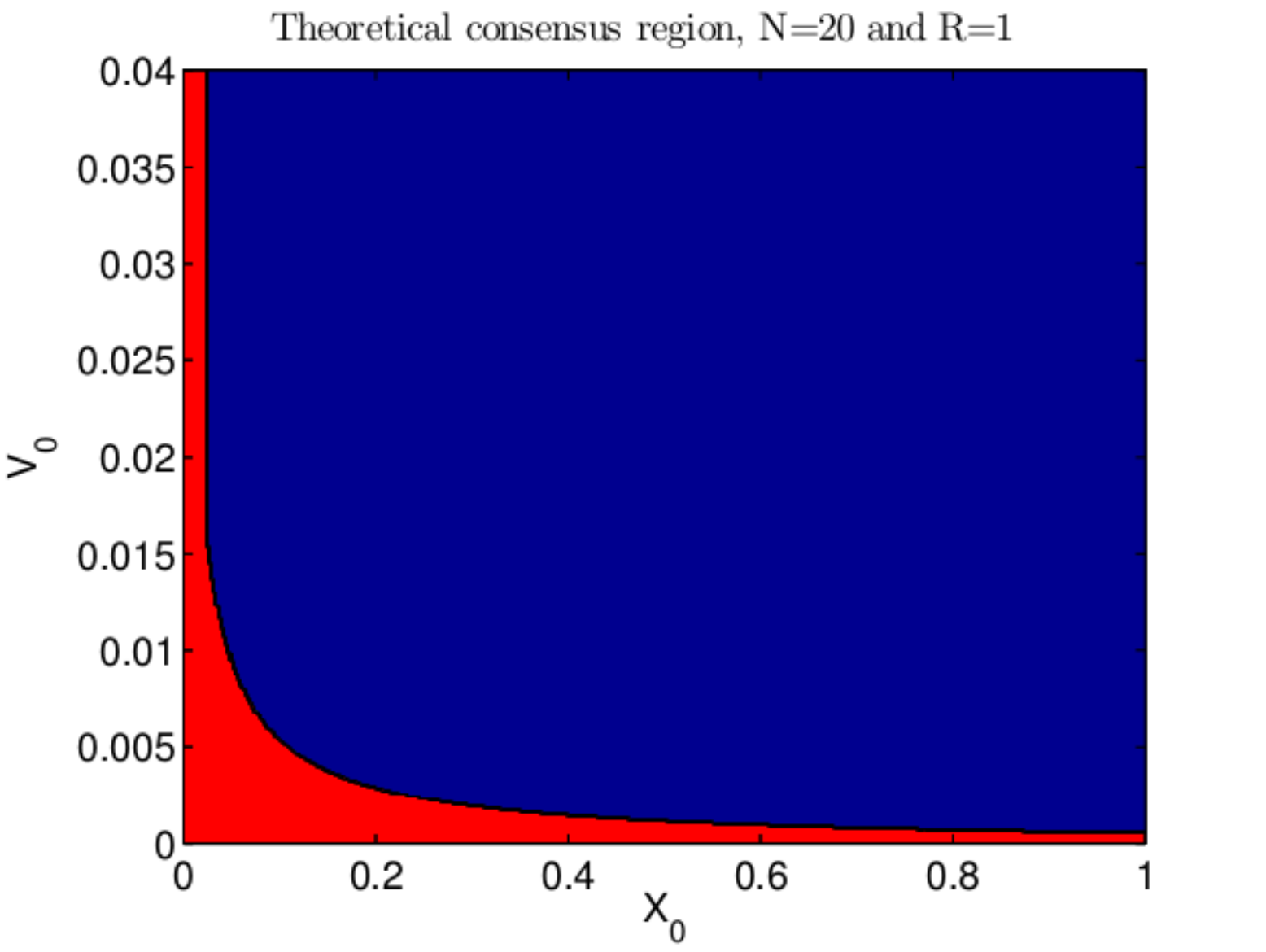}\\
\includegraphics[width = 0.49\textwidth]{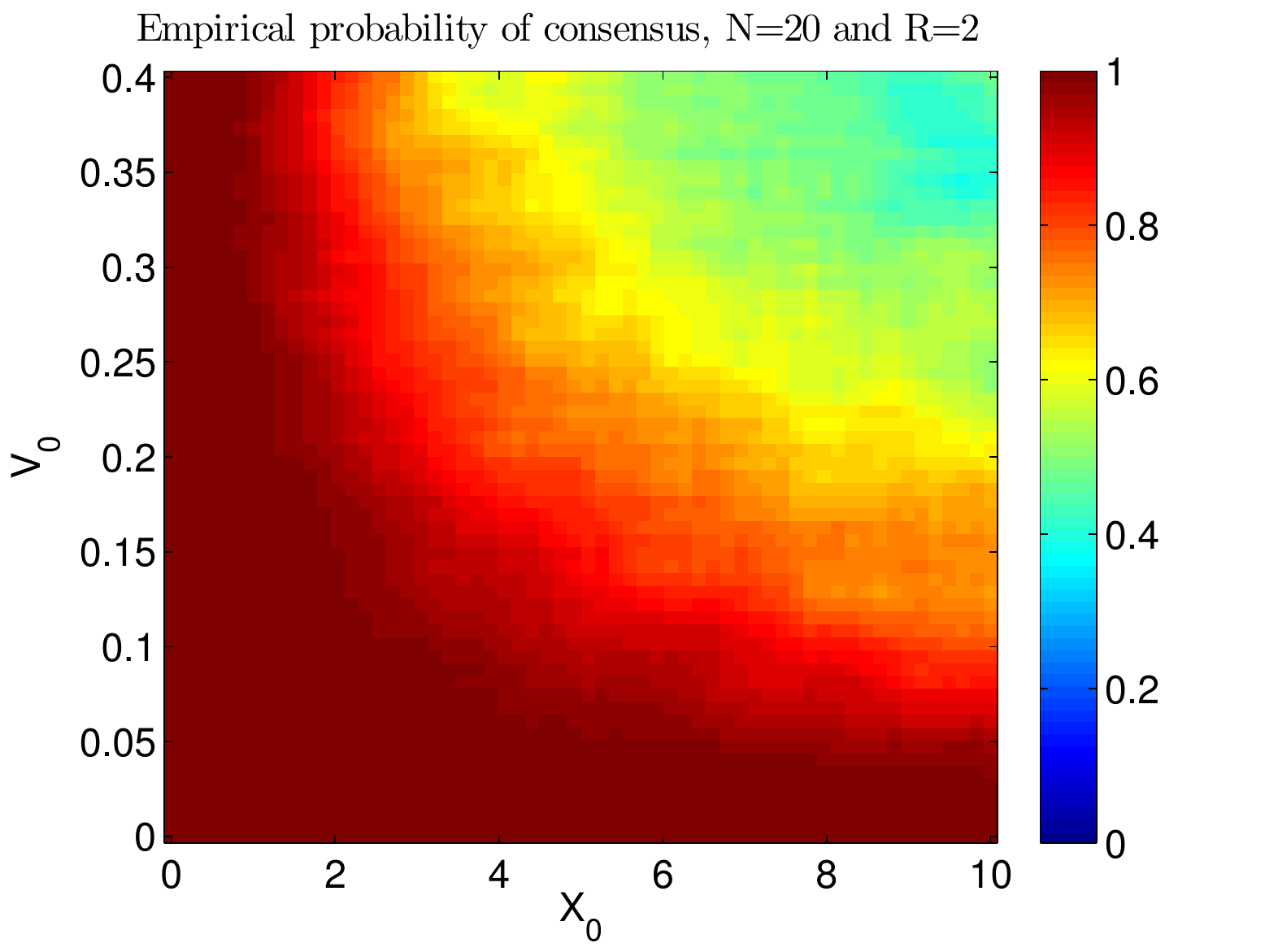}
\includegraphics[width = 0.49\textwidth]{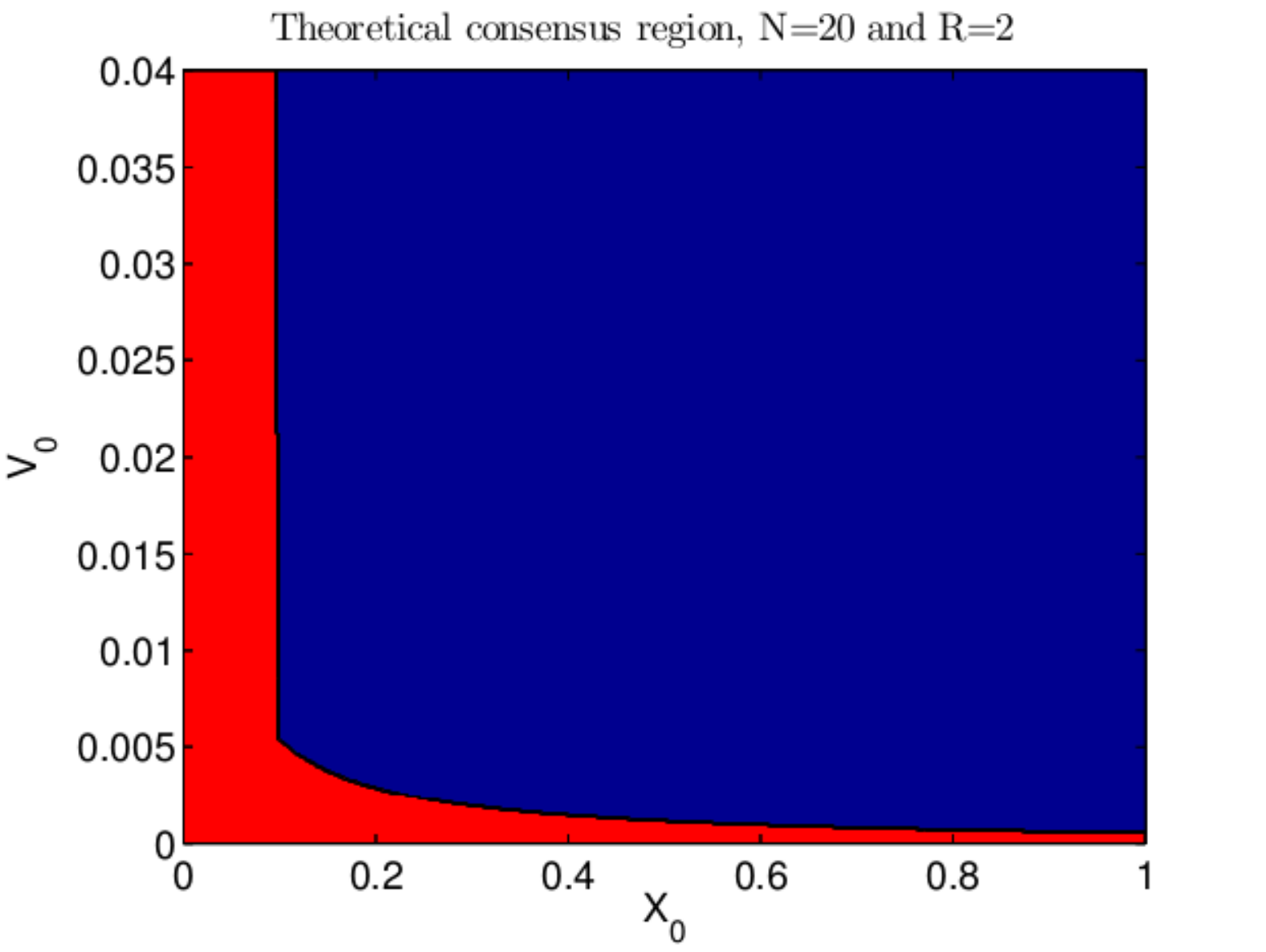}
\caption{Local feedback control. Empirical consensus regions and theoretical estimates for $N=20$ agents and different control radii $R$.}
\label{fig:5}
\end{figure}

\begin{figure}[!ht]
\centering
\includegraphics[width = 0.5\textwidth]{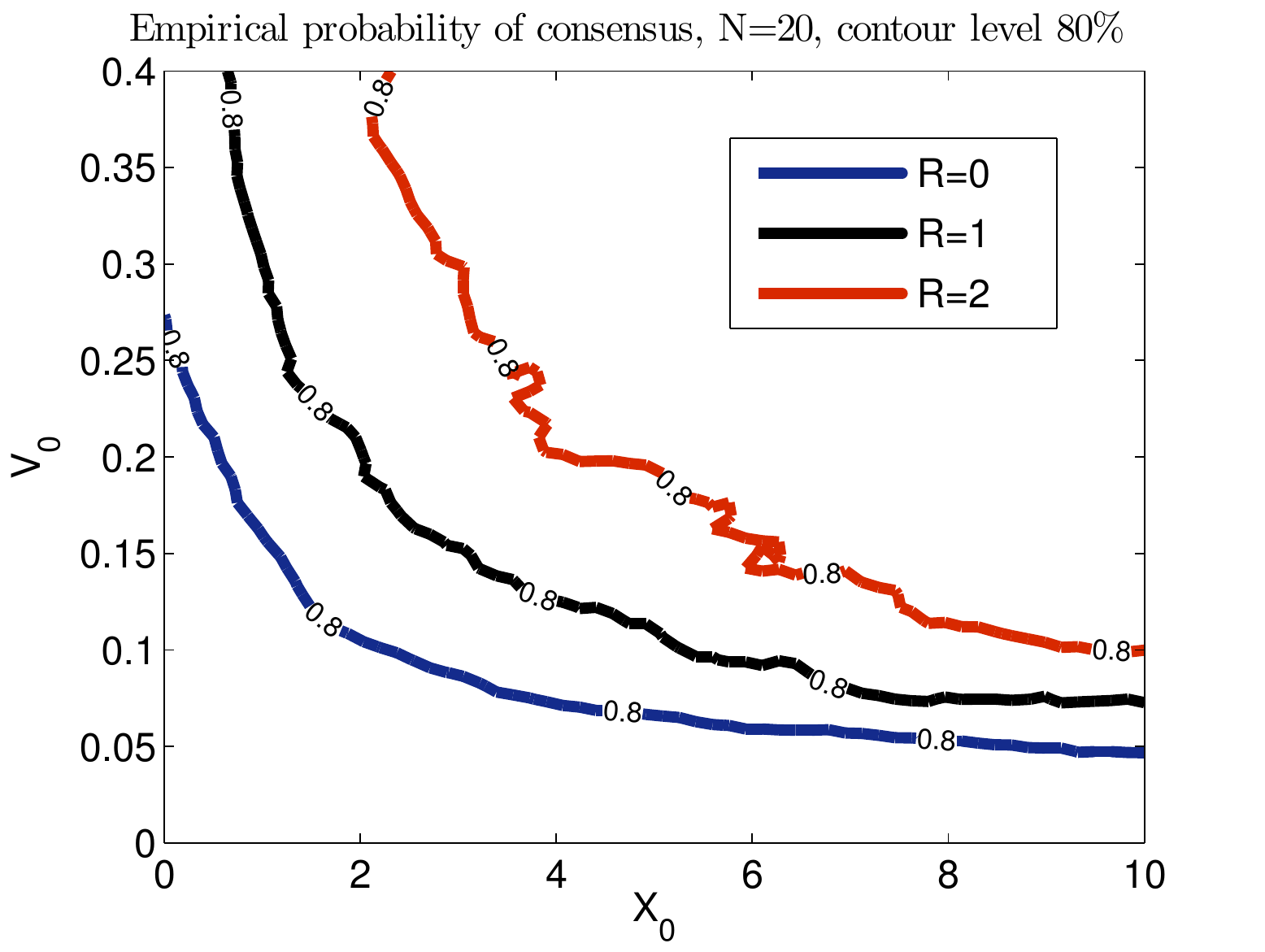}
\caption{Local feedback control. Empirical contour lines for the 80$\%$ probability of consensus with different control radii.}
\label{fig:6}
\end{figure}




\section{Sparse control of the Cucker-Smale model}\label{sec:cuckersparse}

We have seen throughout the previous sections how difficult it is to ensure unconditional convergence to consensus for alignment models. In particular, in Section \ref{sec:localmeanR} we have proven that the addition of a local feedback does not always help: Theorem \ref{th:HaHaKimExtended} shows that we can guarantee unconditional convergence to consensus with respect to the initial datum for dynamical systems of the form
\begin{align*}
\left\{
\begin{aligned}
\begin{split}
\dot{x}_{i}(t) & = v_{i}(t), \\
\dot{v}_{i}(t) & = \frac{1}{N} \sum_{j = 1}^N a\left(\vnorm{x_i(t) - x_j(t)}\right)\left(v_{j}(t)-v_{i}(t)\right) + \gamma\left(\frac{1}{|\Lambda_R(t,i)|} \sum_{j \in \Lambda_R(t,i)} v_j(t) - v_i(t)\right).
\end{split}
\end{aligned}
\right.
\end{align*}
only in the case $R = +\infty$, for which the identity
$$\frac{1}{|\Lambda_R(t,i)|} \sum_{j \in \Lambda_R(t,i)} v_j(t) = \overline{v}(t)$$
holds. This means that either the agents have perfect information of the state of the entire system (so that the local mean $\overline{v}_i$ is equal to the true mean $\overline{v}$) or, as the numerical simulations in Section \ref{sim:local} show, there are situations where the agents are not able to converge to consensus. As already pointed out in Section \ref{sec:first_results}, this is a very strong requirement to ask for, and not many real-life scenarios are able to support it. Consider, for instance, the case of an assembly of people trying to reach an unanimous decision, like the European Union Council: since the extra term can be interpreted as an additional desire of each agent to agree with people whose goal is near to his, the requirement $R = +\infty$ corresponds to asking that all the individual goals are close, i.e., all agents pursue the same end. A truly imaginative world indeed! We are thus facing an inherent, severe limitation of the decentralized approach.

\subsection{Centralized feedback interventions}

To overcome this apparent dead-end, let us write $u_i(t) = \gamma(\overline{v}(t) - v_i(t))$, i.e.,
\begin{align}\label{eq:cuckercontr}
\left\{
\begin{aligned}
\begin{split}
\dot{x}_{i}(t) & = v_{i}(t), \\
\dot{v}_{i}(t) & = \frac{1}{N} \sum_{j = 1}^N a\left(\vnorm{x_i(t) - x_j(t)}\right)\left(v_{j}(t)-v_{i}(t)\right) + u_i(t).
\end{split}
\end{aligned}
\right.
\end{align}
Instead of interpreting $u_i$ as a \textit{decentralized} force, let us consider it as an \textit{external} force from an outside source acting on the system to help it to coordinate. This new approach sheds a completely different light on the problem: 
with respect to the example considered before, is like introducing a moderator heading the discussion, who can make pressure on the participants to the council facilitating the consensus process. Adding an external figure implementing intervention policies broadens further the expressive power of the problem: indeed, since we are in principle no more tied to specific interventions of the form $u_i(t) = \gamma(\overline{v}(t) - v_i(t))$, this setting enables us to ask ourselves the following question
\medskip
\begin{center}
(Q) \textit{ given a set of constraints, which control $u$ is the best to reach a specific goal?}
\end{center}
\medskip

In this section, we shall study a specific instance of this very general issue in the case of system \eqref{eq:cuckercontr}. In our setting, the constraints shall be
\begin{enumerate}[label=$(\roman*)$]
\item the control is of \textit{feedback-type}, i.e., computed instantaneously as a function of the state variables, following a \textit{locally optimal} criterion;
\item there is a maximal amount of resources $M > 0$ that the central policy maker can spend at any given time for the intervention;
\item the control should act on the least amount of agents possible at any time.
\end{enumerate}
For the time being, our goal is again alignment, hence we seek for a control $u$ for which the associated solution to system \eqref{eq:cuckercontr} tends to consensus in the sense of Definition \ref{def:consensus}. We have seen in Proposition \ref{p:equivconsensus} that an effective criterion for consensus emergence is the minimization of the Lyapunov functional $V$: if we are able to prove that our control strategy is able to drive $V$ below the threshold level given by Theorem \ref{thm:hhk}, we have automatically consensus emergence (see Figure \ref{consreg}). The maximization of the decay rate of $V$ is a locally optimal criterion, 
and hence compatible with point ($i$). 

\begin{figure}[!htb]
\centering
\includegraphics[scale=0.8]{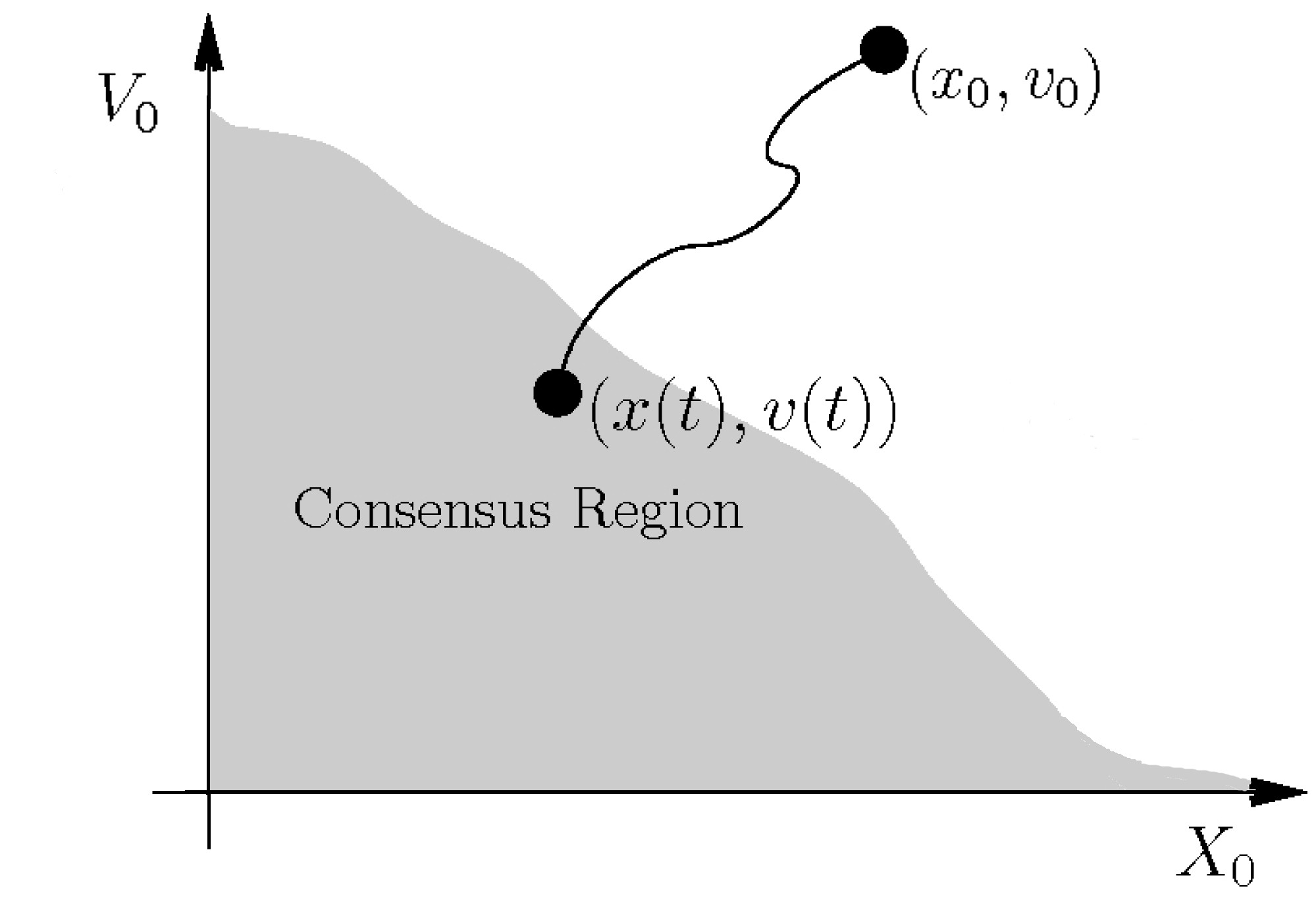}
\caption{Steering the system to a point fulfilling the conditions of Theorem \ref{thm:hhk}.} 
\label{consreg}
\end{figure}

The following preliminary estimate shows the effect of a control on $V$.
\begin{lemma}\label{le:wichtig}
For any measurable function $u:\R_+\rightarrow\R^{dN}$ it holds
\begin{align*}
\frac{d}{dt}V(t)\leq 2B(u(t),v(t)).
\end{align*}
\end{lemma}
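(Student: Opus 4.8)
The plan is to exploit the bilinearity of $B$ so that the control enters the estimate purely additively, while the genuinely dynamical part is inherited verbatim from the uncontrolled bound of Lemma \ref{cor:trivialVdecay}. Since $V(t) = B(v(t),v(t))$ and $B$ is a fixed symmetric bilinear form, the chain rule gives $\frac{d}{dt}V(t) = 2B(\dot v(t), v(t))$. I would then substitute the velocity equation of \eqref{eq:cuckercontr}, writing $\dot v_i(t) = w_i(t) + u_i(t)$ with the interaction term $w_i(t) = \frac{1}{N}\sum_{j=1}^N a(\vnorm{x_i(t)-x_j(t)})(v_j(t)-v_i(t))$, and use linearity of $B$ in its first argument to split
\begin{align*}
\frac{d}{dt}V(t) = 2B(w(t), v(t)) + 2B(u(t), v(t)).
\end{align*}

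The key observation is that $2B(w(t),v(t))$ is exactly the value $\frac{d}{dt}V$ would assume at the current state $(x(t),v(t))$ for a solution of the \emph{uncontrolled} system \eqref{eq:cuckersmale}: the interaction term $w$ is precisely $\dot v$ for \eqref{eq:cuckersmale}, and the identity $\frac{d}{dt}B(v,v) = 2B(\dot v,v)$ is a pointwise statement in the state, independent of how that state was reached. Consequently Lemma \ref{cor:trivialVdecay}, which is itself a pointwise inequality in $(x,v)$, applies here and yields $2B(w(t),v(t)) \leq -2a(\sqrt{2NX(t)})V(t)$. Since $a$ takes nonnegative values and $V(t) = \frac{1}{N}\sum_{i=1}^N \vnorm{v_i^{\perp}(t)}^2 \geq 0$, this term is non-positive and may simply be discarded, giving $\frac{d}{dt}V(t) \leq 2B(u(t),v(t))$.

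There is no real obstacle here; the only point requiring care is the justification that the bound of Lemma \ref{cor:trivialVdecay} is transferable, i.e.\ that $B(w(t),v(t))$ depends only on the instantaneous values $x(t),v(t)$ and not on the control. If one prefers to avoid the ``pointwise transfer'' argument, an equivalent explicit route is to work directly with $V = \frac{1}{N}\sum_i \vnorm{v_i^{\perp}}^2$: from \eqref{eq:cuckercontr} one has $\frac{d}{dt}\overline v = \overline u$ (the symmetric interaction terms sum to zero), hence $\frac{d}{dt}v_i^{\perp} = w_i + u_i^{\perp}$; the $w$-contribution reproduces the decay bound already computed in Lemma \ref{cor:trivialVdecay}, while the control contribution is $\frac{2}{N}\sum_{i=1}^N u_i^{\perp}(t)\cdot v_i^{\perp}(t) = 2B(u^{\perp}(t),v^{\perp}(t)) = 2B(u(t),v(t))$, using the established identity $B(u,v)=B(u^{\perp},v^{\perp})$. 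Either way the conclusion follows immediately.
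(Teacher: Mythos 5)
Your proof is correct and follows essentially the same route as the paper: write $\frac{d}{dt}V = 2B(\dot v, v)$, split off the control term by bilinearity, and discard the non-positive interaction contribution. The only cosmetic difference is that the paper justifies the sign of the interaction term via the positive semidefiniteness of the Laplacian $L(x(t))$ in the form \eqref{systemmatrix}, whereas you reuse the pointwise bound of Lemma \ref{cor:trivialVdecay}; both yield the same discarded term.
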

\begin{proof}
Using the representation of system \eqref{eq:cuckersmale} in Laplacian form \eqref{systemmatrix}, and the fact that $L(x(t))$ is positive definite, we get
$$\frac{d}{dt}V(t) = \frac{d}{dt}B(v(t),v(t)) = -2B(L(x(t))v(t),v(t)) + 2B(u(t),v(t)) \leq 2B(u(t),v(t)).$$
This concludes the proof.
\end{proof}

The constraint on the maximal amount of available resources $M$ given by point ($ii$) leads to the following definition of \textit{admissible controls}.

\begin{definition}[Admissible controls]\label{def:admcontrols}
A measurable function $u = (u_1,\ldots,u_N):\R_+ \funarrow \R^{dN}$ is an \textit{admissible control} if it satisfies
\begin{align}\label{eq:controlbounded}
\sum^N_{i = 1} \|u_i(t)\| \leq M \quad \text{ for every } t \geq 0.
\end{align}
\end{definition}

As an immediate corollary of Lemma \ref{le:wichtig} we can show 
that the problem of finding admissible controls steering the system to consensus is well-posed.

\begin{corollary}[Total control, {\cite[Proposition 2]{caponigro2015sparse}}]\label{pr:inutile}
Fix $M >0$, an initial condition $(x^0,v^0) \in \R^{dN}\times\R^{dN}$, and $0 < \alpha \leq M/(N\sqrt{V_0})$. Then, the feedback control defined pointwise in time as
\begin{align}\label{eq:totalcontrol}
u(t) = -\alpha v^{\perp}(t) \quad \text{ for every } t \geq 0,
\end{align}
is admissible and the solution associated to $u$ 
tends to consensus.
\end{corollary}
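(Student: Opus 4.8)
The plan is to derive both conclusions of the statement from the single Lyapunov estimate of Lemma~\ref{le:wichtig}, exploiting the fact that the proposed feedback is nothing but the componentwise law $u_i(t) = -\alpha v_i^{\perp}(t) = \alpha(\overline{v}(t) - v_i(t))$. The preliminary observation I would record is the identity relating $V$ to the spread of the consensus parameters: from \eqref{eq:otherB} together with $\overline{v^{\perp}} = 0$ (which is \eqref{eq:vertequalzero}), one has $V(t) = B(v(t),v(t)) = B(v^{\perp}(t),v^{\perp}(t)) = \frac{1}{N}\sum_{i=1}^N \vnorm{v_i^{\perp}(t)}^2$, so that $\sum_{i=1}^N \vnorm{v_i^{\perp}(t)}^2 = N\,V(t)$. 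This bridge between $V$ and the $\ell^2$-norm of the perpendicular components is what makes both the energy decay and the resource bound computable.

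For the convergence to consensus I would first evaluate $B(u(t),v(t))$ for $u = -\alpha v^{\perp}$. Since $B$ distinguishes vectors only modulo their projection onto $\cal{V}_f$, we get $B(u(t),v(t)) = -\alpha\,B(v^{\perp}(t),v(t)) = -\alpha\,B(v^{\perp}(t),v^{\perp}(t)) = -\alpha\,V(t)$. Plugging this into Lemma~\ref{le:wichtig} yields
\begin{align*}
\frac{d}{dt}V(t) \leq 2B(u(t),v(t)) = -2\alpha\,V(t),
\end{align*}
whence by Gr\"onwall's inequality $V(t) \leq V_0\,e^{-2\alpha t}$ for every $t \geq 0$. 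In particular $V(t) \to 0$ exponentially fast, and Proposition~\ref{p:equivconsensus} then gives $\vnorm{v_i(t) - \overline{v}(t)} \to 0$ for every $i$, i.e.\ the closed-loop solution tends to consensus in the sense of Definition~\ref{def:consensus}.

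For admissibility I would estimate the total instantaneous expenditure by Cauchy--Schwarz and the identity above: $\sum_{i=1}^N \vnorm{u_i(t)} = \alpha \sum_{i=1}^N \vnorm{v_i^{\perp}(t)} \leq \alpha \sqrt{N}\bigl(\sum_{i=1}^N \vnorm{v_i^{\perp}(t)}^2\bigr)^{1/2} = \alpha\sqrt{N}\,\sqrt{N\,V(t)} = \alpha N \sqrt{V(t)}$. The decay estimate just proved shows $V(t) \leq V_0$ for all $t$, so $\sum_{i=1}^N \vnorm{u_i(t)} \leq \alpha N \sqrt{V_0} \leq M$ by the hypothesis $0 < \alpha \leq M/(N\sqrt{V_0})$, which is exactly the admissibility constraint \eqref{eq:controlbounded}.

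I do not expect a genuine obstacle here: the result is a direct corollary, and the only two points requiring a little care are the identity $V = \frac{1}{N}\sum_i \vnorm{v_i^{\perp}}^2$ and the Cauchy--Schwarz passage from the $\ell^2$ control of $V$ to the $\ell^1$ control of the spent resources. It is worth noting that the argument is not circular: the inequality $\frac{d}{dt}V \leq 2B(u,v)$ of Lemma~\ref{le:wichtig} holds for \emph{any} measurable $u$, so the bound $V(t)\leq V_0$ used for admissibility is obtained independently of the admissibility claim. Finally, well-posedness of the feedback loop is not an issue, since $u = -\alpha v^{\perp}$ is a Lipschitz feedback and the system coincides with \eqref{eq:cuckersmale_uniform} for $\gamma = \alpha$; in the spirit of Remark~\ref{rm:exuniqchap1} the solution is understood in the Carath\'eodory sense.
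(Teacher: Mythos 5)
Your proposal is correct and follows essentially the same route as the paper's own proof: the decay $\frac{d}{dt}V \leq 2B(u,v) = -2\alpha V$ from Lemma~\ref{le:wichtig} plus Gr\"onwall for consensus, and the Cauchy--Schwarz bound $\sum_i \vnorm{u_i} \leq \alpha N\sqrt{V(t)} \leq \alpha N\sqrt{V_0} \leq M$ for admissibility. The extra observations you record (the identity $V = \tfrac{1}{N}\sum_i\vnorm{v_i^{\perp}}^2$, the non-circularity of using the decay before admissibility, and well-posedness of the feedback loop) are sound but simply make explicit what the paper leaves implicit.
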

\begin{proof}
Let $(x,v):\R_+\funarrow\R^{dN}\times\R^{dN}$ be a solution of system \eqref{eq:cuckercontr} with $u$ as in the statement. 
Lemma \ref{le:wichtig} implies that
$$\frac{d}{dt}V(t) \leq 2B(u(t),v(t)) = - 2\alpha B(v^{\perp}(t),v(t)) = -2\alpha V(t);$$
Therefore, an application of Gronwall's Lemma yields $V(t) \leq e^{-2 \alpha t}V(0)$, so $V(t)$ tends to $0$ exponentially fast as $t\rightarrow+\infty$. In particular, $X(t)$ keeps bounded and the trajectory reaches the consensus region in finite time. Lastly, it follows that 
\begin{align*}
\sum^N_{i = 1} \|u_i(t)\| \leq \sqrt{N}\sqrt{\sum^N_{i = 1}\|u_i(t)\|^2} = \alpha \sqrt{N}\sqrt{\sum^N_{i = 1}\|v^{\perp}_i(t)\|^2} = \alpha N \sqrt{V(t)} \leq \alpha N \sqrt{V_0} \leq M,
\end{align*}
which implies the admissibility of the control.
\end{proof}

Corollary \ref{pr:inutile}, although very simple, is somehow remarkable: not only it shows that we can steer to consensus the system from any initial condition, but that the strength of the control $M > 0$ can be arbitrarily small.
However, this result has perhaps only theoretical validity, because the stabilizing control $u = -\alpha v^{\perp}$ needs to act instantaneously on all the agents, thus requires the external policy maker to interact at every instant with all the agents in order to steer the system to consensus, a procedure that requires a large amount of instantaneous communications, whence the name of \textit{total control}. 
This motivates point ($iii$) and is the reason why we look for interventions that target the fewest number of agents at any given time. However, this leads us into the difficult combinatorial problem of the selection of the best few control components to be activated. How can we solve it?

The problem resembles very much the one in information theory of finding the best possible sparse representation of data in form of vector coefficients with respect to an adapted dictionary for the sake of their compression, see \cite[Chapter 1]{mallat2008wavelet}. In our case, the relationship between control choices and result will be usually highly nonlinear, especially for several known dynamical systems modeling social dynamics: were this relationship more simply linear instead, then a rather well-established theory would predict how many degrees of freedom are minimally necessary to achieve the expected outcome. Moreover, depending on certain spectral properties of the linear model, the theory allows also for efficient algorithms to compute the relevant degrees of freedom, relaxing the associated combinatorial problem. This theory is known in mathematical signal processing and information theory under the name of \textit{compressed sensing}, see the seminal work \cite{tao,donoho} and the review chapter \cite{fora10}. The major contribution of these papers was to realize that one can combine the power of convex optimization, in particular $\ell_1$-norm minimization, and spectral properties of random linear models in order to achieve optimal results on the ability of $\ell_1$-norm minimization of recovering robustly linearly constrained sparsest solutions. Borrowing a leaf from compressed sensing, we model sparse stabilization and control strategies by penalizing the class of vector-valued controls $u = (u_1, \ldots, u_N) \in \R^{dN}$ by means of the mixed $\ell^N_1-\ell^d_2$-norm
\begin{align*}
\sum^N_{i = 1} \|u_i\|_{\ell^d_2}
\end{align*}
The above mixed norm has been already used, for instance, in \cite{elra10} to optimally sparsify multivariate vectors in compressed sensing problems, or in \cite{fora08} as a \textit{joint sparsity} constraint. The use of $\ell_1$-norms to penalize controls was first introduced in the seminal paper \cite{crlo65} to model linear fuel consumption, while lately the use of $L^1$ minimization in optimal control problems with partial differential equation has become very popular, for instance in the modeling of optimal placing of sensors \cite{caclku12,clku12,hestwa12,st09,wawa11}.

\subsection{Sparse feedback controls}

We wonder whether we can stabilize the system by means of interventions that are more parsimonious than the total control, since they are more realistically modeling actual government actions. From 
Lemma \ref{le:wichtig}, we learn that a good strategy to steer the system to consensus is actually the minimization of $B(u(t),v(t))$ with respect to $u$, for all $t$.
For this reason, we choose controls according to a specific variational principle leading to a componentwise sparse stabilizing feedback law.

\begin{definition}\label{def:cuckervariational}
For every $M>0$ and every $(x,v)\in\R^{dN}\times\R^{dN}$, let $U(x,v)$ be the set of solutions of the variational problem
\begin{align}\label{eq:variationalprinciple}
\min_{u\in\R^{dN}} \left( B(u,v) + \gamma(B(x,x))\frac{1}{N}\sum^N_{i = 1}\|u_i\| \right) \quad \text{ subject to } \sum^N_{i = 1} \|u_i\| \leq M,
\end{align}
where the \textit{threshold functional} $\gamma$ is defined as
\begin{align*}
\gamma(X) \define \int^{\infty}_{\sqrt{X}}a(\sqrt{2N}r)dr.
\end{align*}
\end{definition}

Notice that the variational principle \eqref{eq:variationalprinciple} is balancing the minimization of $B(u, v)$, which we mentioned above as relevant to promote convergence to consensus, and the $\ell_1$-norm term $\sum^N_{i = 1}\|u_i\|$, expected to promote sparsity.

Each value of $\gamma(B(x,x))$ yields a partition of $\R^{dN} \times \R^{dN}$ into four disjoint sets:
\begin{description}
	\item[$\mathcal{P}_1$] $\define \{ (x,v) \in \R^{dN} \times \R^{dN} : \max_{1 \leq i \leq N} \vnorm{v^{\perp}_i} < \gamma(B(x,x))^2 \}$,
	\item[$\mathcal{P}_2$] $\define \{ (x,v) \in \R^{dN} \times \R^{dN} : \max_{1 \leq i \leq N} \vnorm{v^{\perp}_i} = \gamma(B(x,x))^2 \text{ and } \exists k \geq 1 \text{ and } i_1, \ldots, i_k \break\in \{1, \ldots N\}$ $\text{ such that } \vnorm{v^{\perp}_{i_1}} = \ldots = \vnorm{v^{\perp}_{i_k}} \text{ and } \vnorm{v^{\perp}_{i_1}} > \vnorm{v^{\perp}_j} \text{ for every } j \not \in \{i_1, \ldots, i_k\}\}$,
	\item[$\mathcal{P}_3$] $\define \{ (x,v) \in \R^{dN} \times \R^{dN} : \max_{1 \leq i \leq N} \vnorm{v^{\perp}_i} > \gamma(B(x,x))^2 \text{ and } \exists ! i \in \{1, \ldots N\} \text{ such} \\ \text{ that } \vnorm{v^{\perp}_i} > \vnorm{v^{\perp}_j} \text{ for every } j \not = i \}$,
	\item[$\mathcal{P}_4$] $\define \{ (x,v) \in \R^{dN} \times \R^{dN} : \max_{1 \leq i \leq N} \vnorm{v^{\perp}_i} > \gamma(B(x,x))^2 \text{ and } \exists k > 1 \text{ and } i_1, \ldots, i_k\break \in \{1, \ldots N\}$ $\text{ such that } \vnorm{v^{\perp}_{i_1}} = \ldots = \vnorm{v^{\perp}_{i_k}} \text{ and } \vnorm{v^{\perp}_{i_1}} > \vnorm{v^{\perp}_j} \text{ for every } j \not \in \{i_1, \ldots, i_k\} \}$,
\end{description}

Moreover, since we are minimizing $B(u,v) = B(u,v^{\perp})$, it is easy to see that, for every $(x, v) \in \R^{dN}\times\R^{dN}$ and every element $u(x,v)=(u_1(x,v), \dots, u_N(x,v))^T\in U(x, v)$ there exist nonnegative real numbers $\eps_i \geq 0$ such that, for every $i = 1, \ldots, N$, it holds
\begin{align}\label{eq:formcontrolcucker}
u_i(x,v) =\begin{cases}
	\displaystyle -\eps_i \frac{v^{\perp}_i}{\|v^{\perp}_i\|} & \quad \text{  if } \|v^{\perp}_i\| \not = 0, \\
	0 & \quad \text{  if } \|v^{\perp}_i\| = 0,
	\end{cases}
\end{align}
where $0 \leq \sum^N_{i = 1} \eps_i \leq M$. The values of the $\eps_i$'s can be determined on the basis of which partition $(x,v)$ belongs to:
\begin{itemize}
\item if $(x,v) \in \mathcal{P}_1$ then $\eps_i = 0$ for every $i = 1, \ldots, N$;
\item if $(x,v) \in \mathcal{P}_2$ then indicating with $i_1, \ldots, i_k$ the indexes such that $\vnorm{v^{\perp}_{i_1}} = \ldots = \vnorm{v^{\perp}_{i_k}} = \gamma(B(x,x))$ and $\vnorm{v^{\perp}_{i_1}} > \vnorm{v^{\perp}_j}$ for every  $j \not \in \{i_1, \ldots, i_k\}$, we have $\eps_j = 0$ for every  $j \not \in \{i_1, \ldots, i_k\}$;
\item if $(x,v) \in \mathcal{P}_3$ then, indicating with $i$ the only index such that $\vnorm{v^{\perp}_i} > \vnorm{v^{\perp}_j}$ for every  $j \not = i$, we have $\eps_i = M$ and $\eps_j = 0$ for every  $j \not = i$;
\item if $(x,v) \in \mathcal{P}_4$ then, indicating with $i_1, \ldots, i_k$ the indexes such that $\vnorm{v^{\perp}_{i_1}} = \ldots = \vnorm{v^{\perp}_{i_k}}$ and $\vnorm{v^{\perp}_{i_1}} > \vnorm{v^{\perp}_j}$ for every  $j \not \in \{i_1, \ldots, i_k\}$, we have $\eps_j = 0$ for every  $j \not \in \{i_1, \ldots, i_k\}$ and $\sum^k_{\ell = 1} \eps_{i_{\ell}} = M$.
\end{itemize}

Notice that any control $u(x,v) \in U(x,v)$ acts as an additional force which pulls agents towards having the same mean consensus parameter. The imposition of the $\ell^N_1-\ell^d_2$-norm constraint has the function of enforcing \textit{sparsity}: from the observation above clearly follows that
\begin{align*}
U|_{\mathcal{P}_1} = \{0\} \quad \text{and} \quad U|_{\mathcal{P}_3} = \{(0,\ldots,0,-M v^{\perp}_i / \|v^{\perp}_i\|,0, \ldots,0)^T\},
\end{align*}
for some unique $i \in \{1, \ldots, N\}$, i.e., the restrictions of $U$ to $\mathcal{P}_1$ and to $\mathcal{P}_3$ are single-valued. However, even if not all controls belonging to $U$ are sparse, there exist selections with maximal sparsity.

\begin{definition}[{\cite[Definition 4]{caponigro2015sparse}}]\label{cuckersparsecontrol}
We select the \textit{sparse feedback control} $u(x,v) \in U(x,v)$ according to the following criterion:
\begin{itemize}
\item if $\max_{1 \leq i \leq N}\vnorm{v^{\perp}_i} \leq \gamma(B(x,x))^2$, then $u(x,v) = 0$;
\item if $\max_{1 \leq i \leq N}\vnorm{v^{\perp}_i} > \gamma(B(x,x))^2$, denote with $\maxindex(x,v) \in \{1,\ldots,N\}$ the smallest index such that
\begin{align*}
\vnorm{v^{\perp}_{\maxindex(x,v)}} = \max_{1 \leq i \leq N}\vnorm{v^{\perp}_i}.
\end{align*}
Then
\begin{align*}
u_j(x,v) \define\begin{cases}
\displaystyle -M \frac{v^{\perp}_{\maxindex(x,v)}}{\|v^{\perp}_{\maxindex(x,v)}\|} & \quad \text{ if } j = \maxindex(x,v), \\
 0 & \quad \text{ otherwise.}
\end{cases}
\end{align*}
\end{itemize}
\end{definition}

The geometrical interpretation of why the sparse feedback control is a solution of \eqref{eq:variationalprinciple} is given by the graphics in Figure \ref{fig:sprc} below, representing the scalar situation.

\begin{figure}[!h]
	\centering
	\includegraphics[width=0.47\linewidth]{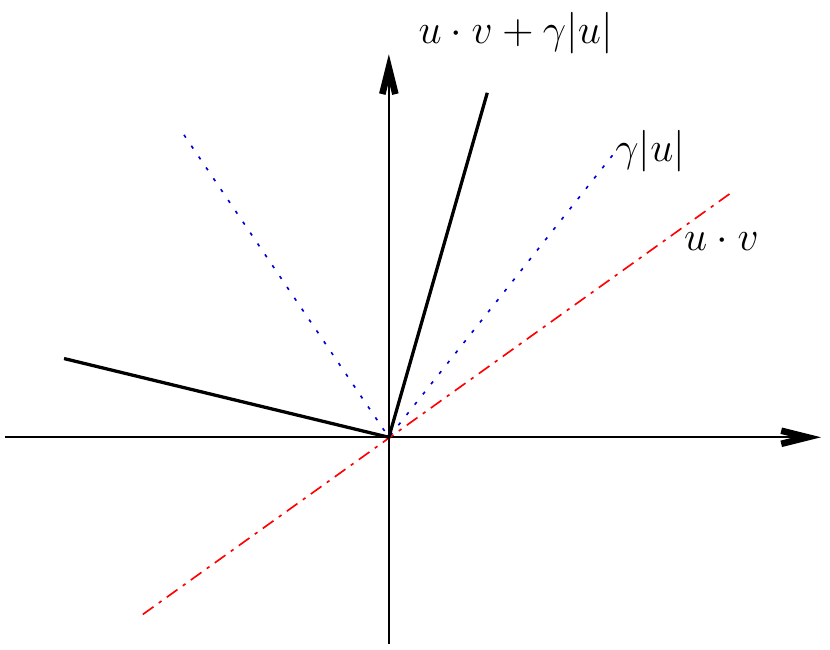}
	\includegraphics[width=0.50\linewidth]{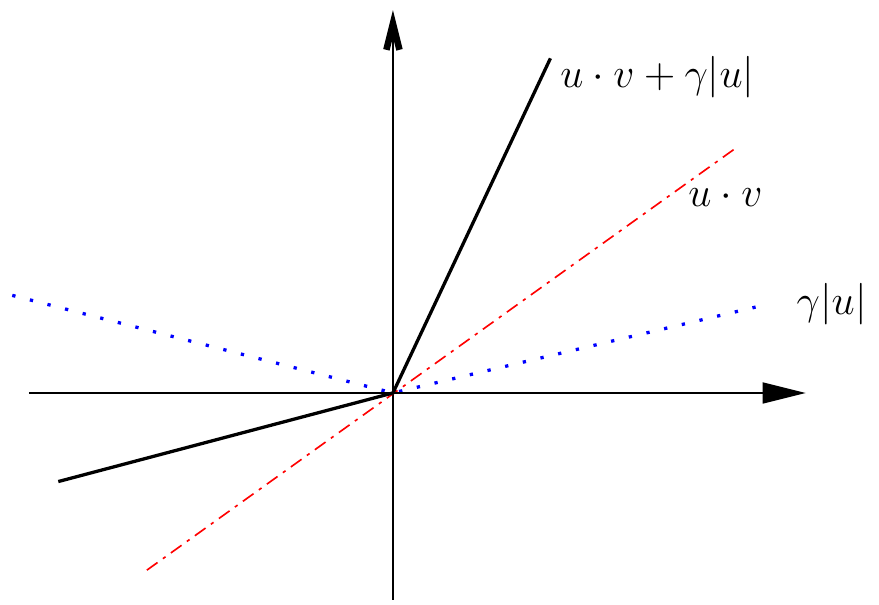}
	\caption{Geometrical interpretation of the solution of \eqref{eq:variationalprinciple} in the scalar case. On the left: for $|v| \leq \gamma$ the minimal solution $u \in [-M,M]$ is zero. On the right: for $ |v| > \gamma $ the minimal solution $u \in [-M,M]$ is for $|u|=M$.}
	\label{fig:sprc}
\end{figure}

The following result shows that the above feedback control strategy is capable of steering the system to the consensus region in finite time.

\begin{theorem}[{\cite[Theorem 3]{caponigro2015sparse}}]\label{greedycontr}
For every initial condition $(x_{0},v_{0}) \in \R^{dN} \times \R^{dN}$ and $M>0$, there exist $T >0$ and a \textit{piecewise constant} selection of the sparse feedback selection of Definition \ref{cuckersparsecontrol} 
such that the associated absolutely continuous solution reaches the consensus region at the time $T$.
\end{theorem}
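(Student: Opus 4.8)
The plan is to show that, outside the consensus region, the sparse feedback of Definition \ref{cuckersparsecontrol} forces the Lyapunov functional $V$ to decay at a \emph{uniform linear rate} in $\sqrt{V}$, and then to verify that this property survives a suitable sample-and-hold discretization, thereby producing a genuinely piecewise constant control that drives $(X,V)$ into the consensus region of Theorem \ref{thm:hhk} in finite time (after which consensus is automatic).

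First I would compute the instantaneous effect of the feedback. Since $B(u,v)=B(u,v^{\perp})=\frac{1}{N}\sum_{i=1}^N u_i\cdot v_i^{\perp}$ by \eqref{eq:otherB} together with $\overline{v^{\perp}}=0$, inserting the single active component $u_{\maxindex}=-M\,v^{\perp}_{\maxindex}/\vnorm{v^{\perp}_{\maxindex}}$ gives $B(u,v)=-\frac{M}{N}\vnorm{v^{\perp}_{\maxindex}}$. As $\maxindex$ is the index of maximal $\vnorm{v_i^{\perp}}$ and $V=\frac{1}{N}\sum_i\vnorm{v_i^{\perp}}^2\le\max_i\vnorm{v_i^{\perp}}^2$, we have $\vnorm{v^{\perp}_{\maxindex}}\ge\sqrt{V}$, so Lemma \ref{le:wichtig} yields $\frac{d}{dt}V(t)\le 2B(u,v)\le-\frac{2M}{N}\sqrt{V(t)}$, that is $\frac{d}{dt}\sqrt{V(t)}\le-\frac{M}{N}$ whenever the feedback is active. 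The decisive point is that this rate is \emph{constant}, a hallmark of the $\ell_1$ penalization, so that $\sqrt{V}$ decays at least linearly rather than merely exponentially.

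Next I would control the spread in position, so that the moving boundary $\gamma(B(x,x))$ of the consensus region cannot escape. From $\dot{X}=2B(x,v)$ and the Cauchy--Schwarz inequality for $B$ restricted to $\cal{V}_{\perp}$ one gets $\frac{d}{dt}\sqrt{X}\le\sqrt{V}\le\sqrt{V_0}$; integrating this against the linear decay of $\sqrt{V}$ shows $\sqrt{X(t)}\le\sqrt{X_0}+NV_0/(2M)=:\rho$ uniformly in $t$. Since $\gamma$ is nonincreasing in $X$ and strictly positive (because $a>0$), this produces a uniform lower bound $\gamma(B(x(t),x(t)))\ge\gamma(\rho^2)=:\gamma_{\ast}>0$. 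Combined with the previous step, as $\sqrt{V}$ is forced down linearly while $\gamma(B(x,x))$ stays $\ge\gamma_{\ast}$, the trajectory must satisfy $\sqrt{V(t)}\le\gamma(B(x(t),x(t)))$ — i.e.\ it enters the consensus region — after a finite time of order $N(\sqrt{V_0}-\gamma_{\ast})/M$.

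The delicate point, and the reason the statement insists on a \emph{piecewise constant} selection, is that the feedback of Definition \ref{cuckersparsecontrol} is discontinuous: the target index $\maxindex(x,v)$ jumps and the pushing direction $v^{\perp}_{\maxindex}/\vnorm{v^{\perp}_{\maxindex}}$ rotates along trajectories, so \eqref{eq:cuckercontr} need not admit a classical solution. To handle this I would fix a partition $0=t_0<t_1<\cdots<t_m=T$ and freeze the control on each $[t_k,t_{k+1})$ at its sampled value $u(x(t_k),v(t_k))$. Because the right-hand side of \eqref{eq:cuckercontr} is bounded (the $\vnorm{v_i}$ stay bounded and $\sum_i\vnorm{u_i}\le M$), the map $t\mapsto B(u(x(t_k),v(t_k)),v(t))$ is Lipschitz with a constant depending only on $\rho$, $\sqrt{V_0}$ and $M$; hence a uniform step $\tau$ can be chosen so that $\frac{d}{dt}V(t)\le-\frac{M}{N}\sqrt{V(t_k)}$ throughout the interval, at the cost of halving the ideal rate. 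With this choice $\sqrt{V}$ still decays linearly, the number of steps $m=T/\tau$ is finite, and the resulting absolutely continuous solution reaches the consensus region at the finite time $T$, whence Theorem \ref{thm:hhk} gives autonomous convergence to consensus. The main obstacle is precisely this last step: making the sample-and-hold argument quantitative, by bounding how far $B(u,v)$ and the active index can drift within a single frozen step while keeping the step length bounded below uniformly over $[0,T]$.
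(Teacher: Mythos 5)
Your argument is correct and follows essentially the same route as the proof of \cite[Theorem 3]{caponigro2015sparse} that the paper cites: the identity $B(u,v)=-\tfrac{M}{N}\vnorm{v^{\perp}_{\maxindex}}\le-\tfrac{M}{N}\sqrt{V}$ giving the uniform rate $\tfrac{d}{dt}\sqrt{V}\le-\tfrac{M}{N}$, the bound $\sqrt{X(t)}\le\sqrt{X_0}+NV_0/(2M)$ keeping the threshold $\gamma(B(x,x))$ bounded below, and the resulting finite-time entry into the consensus region. The sample-and-hold construction you flag as the delicate step is precisely how the cited proof obtains the piecewise constant selection, so your proposal is the intended argument with only the quantitative sampling estimates left to be written out.
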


This result is truly remarkable, since it holds again independently of the initial conditions and of the strength $M > 0$ of the control. 
Furthermore, the sparse feedback control is optimal for consensus problems with respect to any other control strategy in $U(x(t),v(t))$ which spreads control over multiple agents, as the following result shows.

\begin{proposition}[{\cite[Proposition 3]{caponigro2015sparse}}]\label{prop:sparseoptimal}
	The sparse feedback control of Definition \ref{cuckersparsecontrol} is for every $t \geq 0$ an instantaneous minimizer of
	\begin{align*}
		\mathcal{D}(t, u) \define \frac{d}{dt}V(t)
	\end{align*}
	over all possible feedback controls in $U(x(t),v(t))$. 
\end{proposition}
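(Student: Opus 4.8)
The plan is to exploit the fact that the control enters the evolution of $V$ linearly, reducing the minimization of $\mathcal{D}$ to that of a single scalar functional. First I would record the exact decay rate: writing system \eqref{eq:cuckercontr} in the Laplacian form \eqref{systemmatrix} and differentiating $V(t)=B(v(t),v(t))$ exactly as in the proof of Lemma \ref{le:wichtig}, the control appears only additively,
\begin{align*}
\mathcal{D}(t,u)=\frac{d}{dt}V(t)=-2B\bigl(L(x(t))v(t),v(t)\bigr)+2B(u,v(t)).
\end{align*}
The first summand depends only on the state $(x(t),v(t))$ and not on $u$, so minimizing $\mathcal{D}(t,u)$ over $u\in U(x(t),v(t))$ is equivalent to minimizing the linear functional $u\mapsto B(u,v(t))$ over that same set. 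This reduction is the heart of the argument and is where Lemma \ref{le:wichtig} is invoked.

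Next I would make $B(u,v)$ explicit on $U$. By \eqref{eq:formcontrolcucker} every $u\in U(x,v)$ satisfies $u_i=-\eps_i v^{\perp}_i/\vnorm{v^{\perp}_i}$ when $\vnorm{v^{\perp}_i}\neq 0$ and $u_i=0$ otherwise, with $\eps_i\geq 0$ and $\sum_{i}\eps_i\leq M$. Since $B(u,v)=B(u,v^{\perp})=\frac{1}{N}\sum_i u_i\cdot v^{\perp}_i$ by \eqref{eq:vertequalzero}, this gives
\begin{align*}
B(u,v)=-\frac{1}{N}\sum^N_{i=1}\eps_i\vnorm{v^{\perp}_i}.
\end{align*}
Hence minimizing $B(u,v)$ over $U$ is the same as maximizing $\sum_i\eps_i\vnorm{v^{\perp}_i}$ over the weights $(\eps_i)$ that the description of $U$ permits on each cell of the partition $\mathcal{P}_1,\dots,\mathcal{P}_4$.

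Then I would run through that partition. On $\mathcal{P}_1$ and $\mathcal{P}_3$ the set $U$ is single valued and coincides with the sparse selection of Definition \ref{cuckersparsecontrol} — the null control on $\mathcal{P}_1$, and the control loading all mass $M$ onto the unique maximal agent on $\mathcal{P}_3$ — so the sparse control is the minimizer because it is the only admissible competitor. On $\mathcal{P}_4$ every $u\in U$ spreads the total mass $\sum_i\eps_i=M$ only among the tied indices $i_1,\dots,i_k$ realizing the common value $\vnorm{v^{\perp}_{i_1}}=\max_i\vnorm{v^{\perp}_i}$, whence $B(u,v)=-\frac{M}{N}\max_i\vnorm{v^{\perp}_i}$ is constant over $U$; in particular the sparse control, which dumps the whole budget on the single index $\maxindex(x,v)$, attains this common minimal value. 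This is the crucial qualitative content: concentrating all control on one leading agent is never worse than distributing it among several.

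The point I expect to require the most care is the threshold cell $\mathcal{P}_2$, where $\max_i\vnorm{v^{\perp}_i}$ equals $\gamma(B(x,x))$ and the maximal indices carry zero marginal cost in the variational problem \eqref{eq:variationalprinciple}; there $U$ contains simultaneously the null control and controls of total mass $M$, and $B(u,\cdot)$ takes different values on them. One must check that the boundary convention fixed in Definition \ref{cuckersparsecontrol} selects, among these, the element that minimizes $B(u,v)$, i.e.\ that the sparse control does not collapse to the wrong endpoint at the threshold. Away from $\mathcal{P}_2$ the statement follows mechanically from the linear reduction and the explicit form of $B(u,v)$, so the only real subtlety is concentrated in this single degenerate case.
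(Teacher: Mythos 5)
Your reduction is the right one and, on $\mathcal{P}_1$, $\mathcal{P}_3$ and $\mathcal{P}_4$, your argument is complete and is essentially the intended proof (the chapter itself gives none, deferring to \cite[Proposition 3]{caponigro2015sparse}): the control enters $\frac{d}{dt}V$ only through the affine term $2B(u,v)$ by the computation of Lemma \ref{le:wichtig}, on $U(x,v)$ one has $B(u,v)=-\frac{1}{N}\sum_i\eps_i\vnorm{v^{\perp}_i}$ with the nonzero $\eps_i$ supported on the indices realizing $\max_i\vnorm{v^{\perp}_i}$, and on those three cells either $U=\{0\}$ or every element of $U(x,v)$ carries the full budget $\sum_i\eps_i=M$ on those tied indices, so $B(u,v)$ is constant on $U(x,v)$ and the sparse selection trivially attains the minimum.

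The gap is exactly where you located it, but it does not close the way you hope. On $\mathcal{P}_2$ the description of $U(x,v)$ only forces $\eps_j=0$ off the maximal set and leaves $\sum_{\ell}\eps_{i_\ell}$ free in $[0,M]$, so $U(x,v)$ contains both $u=0$ and controls of total mass $M$ concentrated on a maximal index; for the latter $B(u,v)=-\frac{M}{N}\max_i\vnorm{v^{\perp}_i}$, which is strictly negative because the strict positivity of $a$ gives $\gamma(B(x,x))>0$ and hence $\max_i\vnorm{v^{\perp}_i}>0$ on $\mathcal{P}_2$. Definition \ref{cuckersparsecontrol} adopts the convention $u=0$ whenever $\max_i\vnorm{v^{\perp}_i}\le\gamma(B(x,x))^2$, so on $\mathcal{P}_2$ the sparse selection is the null control, i.e.\ the element of $U(x,v)$ that \emph{maximizes} $B(u,v)$ rather than minimizing it. No boundary bookkeeping rescues the literal ``for every $t$'' on this cell; the honest repair is to note that $\mathcal{P}_2$ is the common boundary of $\mathcal{P}_1$ and $\mathcal{P}_3\cup\mathcal{P}_4$, that a trajectory meets it only on a negligible set of times, and that the minimality claim is therefore to be read for almost every $t$ (which is all that the downstream integral estimates, e.g.\ those behind Theorem \ref{greedycontr}, require) --- or simply to restate the proposition for $(x(t),v(t))\notin\mathcal{P}_2$. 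As written, your ``one must check'' sentence defers a verification that, with the definitions of this chapter, actually fails.
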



A direct consequence of Proposition \ref{prop:sparseoptimal} is that, for Cucker-Smale systems, a feedback stabilization is most effective if all the attention of the controller is focused on the agent farthest away from consensus. 
This also means that, despite the fact that the external policy maker may have few resources at disposal and can allocate them at each time only on very few key players in the system, it is always possible to effectively stabilize the dynamics to return to energy levels where the system tends autonomously to consensus. This result is perhaps surprising if confronted with the more intuitive strategy of controlling more, or even all, agents at the same time. This let us answer to the question (Q) raised at the beginning of this section as follows:
\medskip
\begin{center}
(A)\textit{ under the constraints $(i)-(iii)$, sparse is better.}
\end{center}

\subsection{Numerical implementation of the sparse control strategy}

We now compare the performances of the sparse feedback control with the self-organizing power of an uncontrolled Cucker-Smale system and the efficacy of the total control strategy \eqref{eq:totalcontrol}. In Figure \ref{fig:sparsecontr1}--left it is shown a simulation of a Cucker-Smale system with $\beta = 1$ without control (in black), with the total control (in blue), and with the sparse feedback control (in red). While the uncontrolled scenario seems far from converging towards a consensus state, both the total control and the sparse control strategies successfully align the agents in very short time. The greater effectiveness of the sparse feedback control can be witnessed in Figure \ref{fig:sparsecontr1}--right, where it is shown the decay of the Lyapunov functional $V$ in the three different cases: the sparse control is more efficient in bringing $V$ to $0$, as Proposition \ref{prop:sparseoptimal} predicts.

\begin{figure}[!h]
	\centering
	\includegraphics[width=0.49\linewidth]{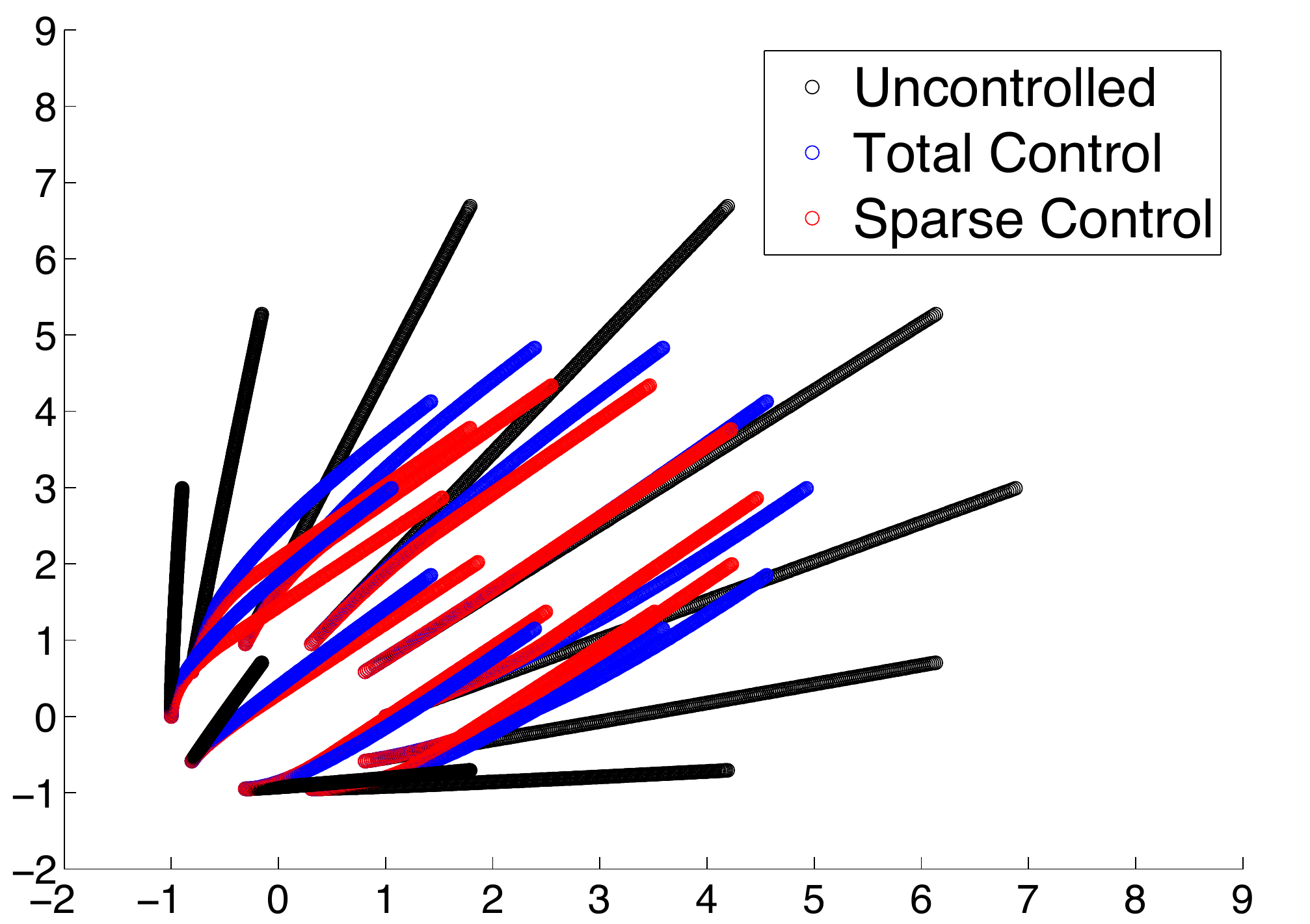}
	\includegraphics[width=0.49\linewidth]{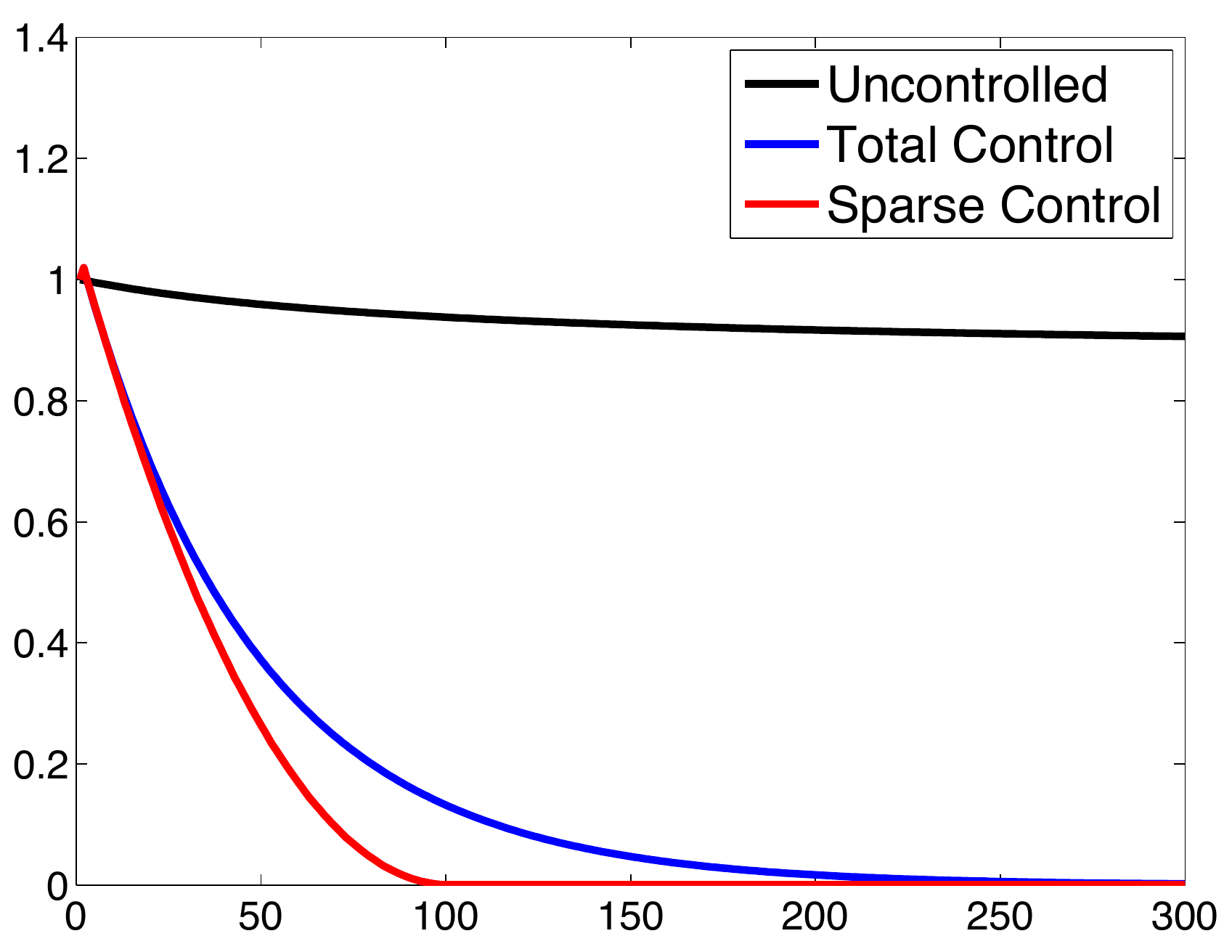}
	\caption{Comparison between sparse, total and no control. On the left: space evolution without control (in black), with the total control (in blue), and with the sparse control strategy (in red). On the right: the respective behavior of the functional $V$.}
	\label{fig:sparsecontr1}
\end{figure}

The situation where the sparse control strategy works at its bests is when the velocities of the agents are almost homogeneous, except for few outliers which are very distant from the mean velocity. As extensively discussed in \cite{bonginijunge2014sparse}, in such situations the total control is suboptimal because it also acts on agents which do not need any intervention, while the sparse control strategy is locally optimal because it focuses all its strength on the small group of outliers. Such scenario is portrayed in Figure \ref{fig:sparsecontr2}: starting from the same initial datum of Figure \ref{fig:sparsecontr1}, we modify the velocity of one agent so that it decisively deviates from the mean velocity. This time, the difference in the outcome of the two control strategies is much more visible.
More generally, an empirical detector of configurations where it is convenient to use the sparse feedback control is the so-called \textit{asymmetry measure}, proposed in \cite[Section 3.6.5]{bonginithesis}.

\begin{figure}[!h]
	\centering
	\includegraphics[width=0.49\linewidth]{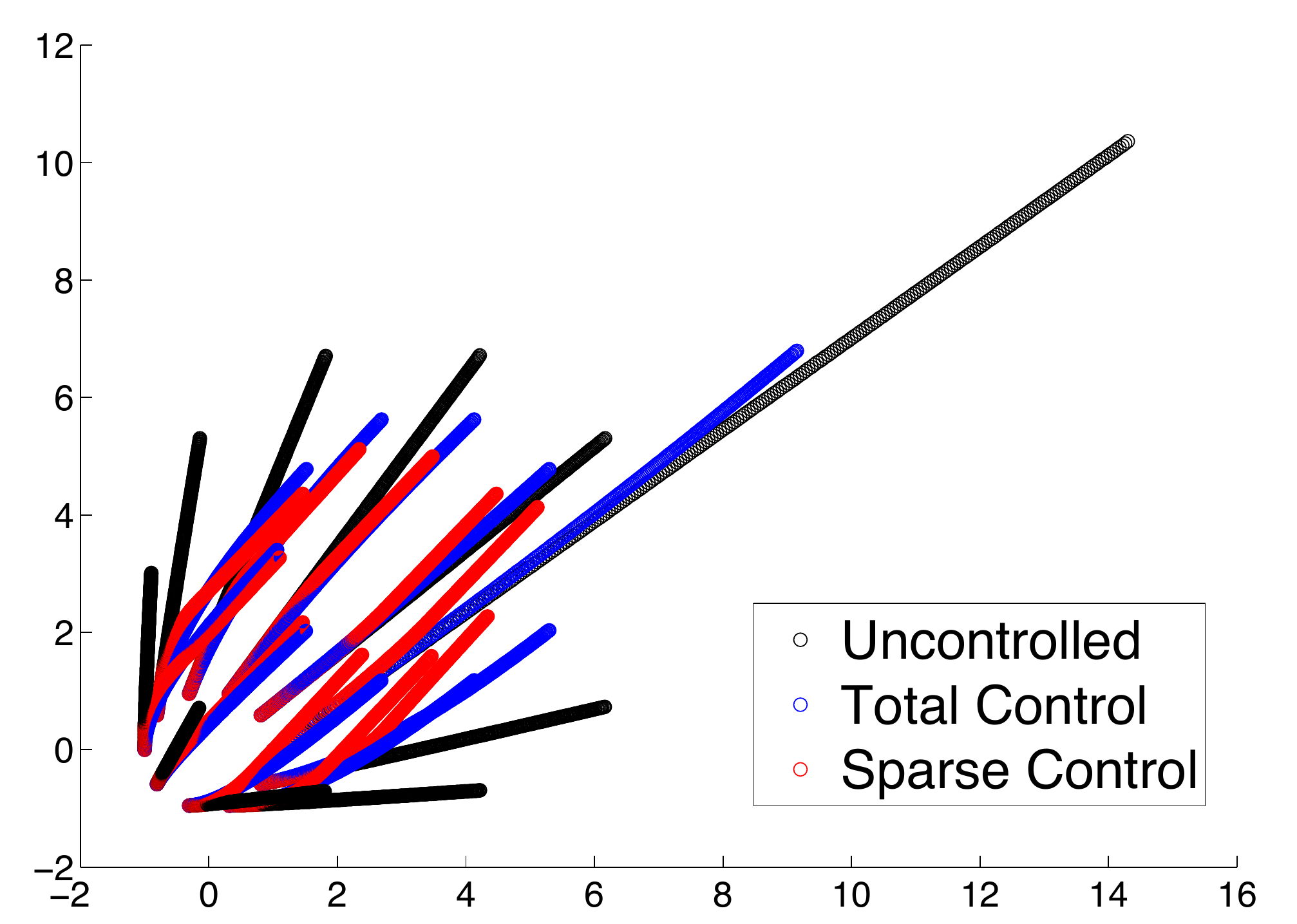}
	\includegraphics[width=0.49\linewidth]{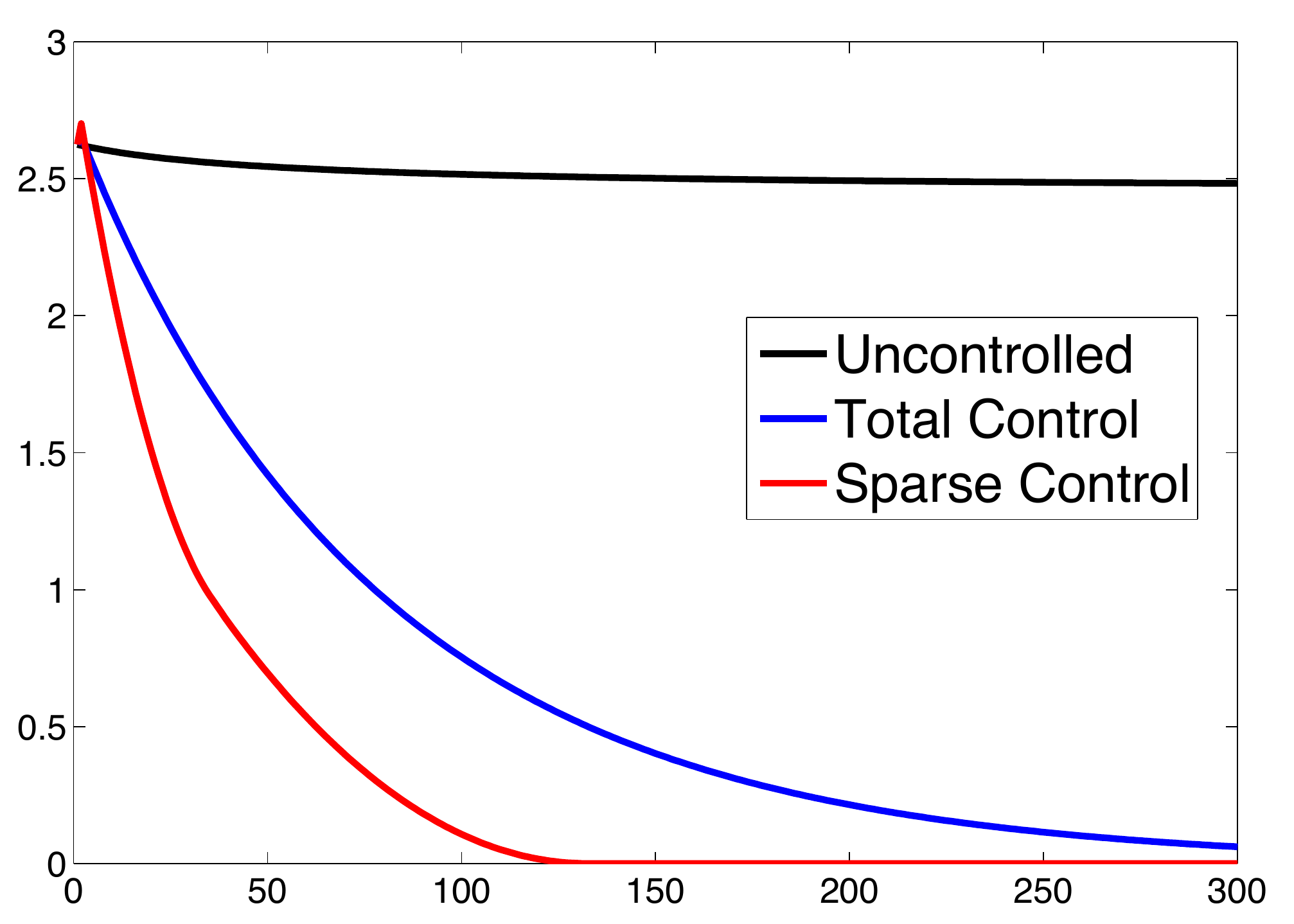}
	\caption{Configuration with one outlier. On the left: space evolution without control (in black), with the total control (in blue), and with the sparse control strategy (in red). On the right: the respective behavior of the functional $V$.}
	\label{fig:sparsecontr2}
\end{figure}

\section{The Cucker-Dong model}\label{sec:cuckerdongintro}

We now show how the sparse feedback control strategy previously introduced has far more reaching potential, as it can address also situations which do not match the structure \eqref{systemmatrix}, like the Cucker and Dong model of cohesion and avoidance introduced in \cite{cucker14}, which is given by the following system of differential equations
\begin{align}
\left\{\begin{aligned}
\begin{split} \label{eq:cuckerdong}
\dot{x}_{i}(t) & = v_{i}(t), \\
\dot{v}_{i}(t) & = -b_{i}(t)v_{i}(t) + \sum_{j = 1}^N a\left(\vnorm{x_{i}(t)-x_{j}(t)}^{2}\right)\left(x_{j}(t)-x_{i}(t)\right) +\\
&\quad+ \sum_{j \not = i}^N f\left(\vnorm{x_{i}(t)-x_{j}(t)}^{2}\right)\left(x_{i}(t)-x_{j}(t)\right),
\end{split}\quad i=1,\ldots,N,
\end{aligned}
\right.
\end{align}
The evolution is governed by an \textit{attraction} force, modeled by a function $a:\mathbb{R}_+ \funarrow \R_+$, which is, for some fixed constant $H > 0$ and $\beta \geq 0$, of the form
\begin{align*}
a(r) = \frac{H}{(1 + r)^{\beta}},
\end{align*}
(notice that here we have $r$ in place of $r^2$, since we write $a(\vnorm{x_{i}-x_{j}}^{2})$ in place of $a\left(\vnorm{x_{i}-x_{j}}\right)$, hence $a$ has the same form as \eqref{eq:cuckerkernel}), though in general any Lipschitz-continuous, nonincreasing function with maximum in $a(0)$ suffices. This force is counteracted by a \textit{repulsion} given by a locally Lipschitz continuous or $\mathcal{C}^1$, nonincreasing function $f:(0,+\infty)\funarrow\R_+$. We request that
\begin{align*} 
& \int^{+\infty}_\delta f(r) \ dr < +\infty, \quad \text{ for every } \delta > 0.
\end{align*}
A typical example of such a function is $f(r) = r^{-p}$ for every $p > 1$. 
The uniformly continuous, bounded functions $b_{i}:\R_+\funarrow[0,\Lambda]$, $i=1,\dots, N$, for a given $\Lambda \geq 0$, are interpreted as a friction which helps the system to stay confined.

It is easily seen how the above model can be rewritten as
\begin{align*}
\left\{
\begin{aligned}
\dot{x}(t) & = v(t), \\
\dot{v}(t) & = -L(x(t)) x(t) - v(t) b(t),
\end{aligned}
\right.
\end{align*}
where for any $x \in \R^{dN}$ the function $L(x) \define L^{a}(x) - L^f(x)$ is the difference between the Laplacians of the two matrices $(a(\vnorm{x_{i}-x_{j}}^{2}))_{i,j=1}^N$ and $(f(\vnorm{x_{i}-x_{j}}^{2}))_{i,j=1}^N$, respectively,
and we have set $vb\define (v_i b_i)_{i=1}^N$ for any $b = (b_1, \ldots, b_N)$. Notice that, differently from \eqref{systemmatrix}, now the Laplacians are acting on the variable $x$ and not anymore on $v$, mixing the dynamics of the two components of the state: as a consequence, the Cucker-Dong model is a non-dissipative system with singular repulsive interaction forces. Similar models considering attraction, repulsion  and other effects, such as alignment or self-drive, appear in the recent literature
and they seem effectively describing realistic situations of conditional pattern formation, see, e.g., some of the most related contributions \cite{MR2507454,ChuDorMarBerCha07,CuckerDong11,d2006self}.

\begin{figure}[!htb]
\centering
\includegraphics[scale=.5]{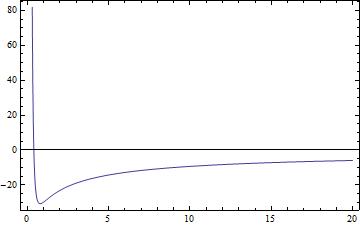}
\caption{Sum of the attraction and repulsion forces $h(r) = f(r) - a(r)$ as a function of the distance $r>0$. The parameters here are $H=50$, $\beta=0.7$, and $p=4$.}
\label{fig:repatt}
\end{figure}

At  first glance it may seem perhaps a bit cumbersome to consider a rather arbitrary splitting of the force into two terms governed by the functions $a$ and $f$ instead
of considering more naturally a unique function $h(r) \define f(r) - a(r)$ of the distance $r>0$, as depicted in Figure \ref{fig:repatt}. However, as we shall clarify in short,
the interplay of the polynomial decay of the function $h$ to infinity and its singularity at $0$ is fundamental in order to be able to characterize the
confinement and collision avoidance of the dynamics, and such a splitting, emphasizing the individual role of these two properties, will turn out to
be useful in our statements. As a matter of fact, several forces in nature do have similar behavior, for instance the van der Waals forces are
governed by Lennard-Jones potentials for which $h(r)= \sigma_f/r^{13} - \sigma_a/r^{7}$, for suitable positive constants $\sigma_f$ and $\sigma_a$.

\subsection{Pattern formation for the Cucker-Dong model}

To quantify the behavior of the system we introduce a quantity called the \emph{total energy} which includes the kinetic and potential energies; for all $(x, v) \in \R^{dN}\times\R^{dN}$ we define
\begin{align} \label{eq:energyfunction}
E(x, v) \define \sum_{i=1}^N \vnorm{v_{i}}^{2} + \frac{1}{2}\sum_{i < j}^N \int_{0}^{\vnorm{x_{i}-x_{j}}^{2}} a(r)dr + \frac{1}{2}\sum_{i < j}^N \int_{\vnorm{x_{i}-x_{j}}^{2}}^{+\infty}f(r)dr.
\end{align}
If $(x(t), v(t))$ is a point of a trajectory of system \eqref{eq:cuckerdong}, we set $E(t)\define E(x(t), v(t))$.


The total energy 
is a Lyapunov functional for system \eqref{eq:cuckerdong} and, provided we are in presence of no friction at all (i.e., $\Lambda = 0$), it is a conserved quantity. 

\begin{proposition}[{\cite[Equation (3.1)]{cucker14}}] \label{pr:der_en_unc}
For every $t \geq 0$, we have
\begin{align*}
\frac{d}{dt}E(t) & = -2 \sum^N_{i = 1}{b_i(t)\|v_i(t)\|^2}.
\end{align*}
Hence, if $\Lambda = 0$ then $\frac{d}{dt}E \equiv 0$.
\end{proposition}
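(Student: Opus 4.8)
The plan is to differentiate $E(t)=E(x(t),v(t))$ along a solution of \eqref{eq:cuckerdong} and to show that the contributions of the attraction and repulsion forces cancel, leaving only the friction dissipation. I would split $E$ into its three pieces — the kinetic energy $\sum_i \vnorm{v_i}^2$, the attraction potential, and the repulsion potential — and handle each separately. For the kinetic part, $\frac{d}{dt}\sum_i \vnorm{v_i}^2 = 2\sum_i v_i\cdot\dot v_i$, into which I substitute the right-hand side of the $\dot v_i$ equation. This decomposes into three groups: the friction group $2\sum_i v_i\cdot(-b_iv_i) = -2\sum_i b_i\vnorm{v_i}^2$, which is already the claimed answer; an attraction group $2\sum_{i,j} a(\vnorm{x_i-x_j}^2)\,v_i\cdot(x_j-x_i)$; and a repulsion group $2\sum_{i\neq j} f(\vnorm{x_i-x_j}^2)\,v_i\cdot(x_i-x_j)$.

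For the two potential pieces I would apply the fundamental theorem of calculus together with the chain rule and the identity $\frac{d}{dt}\vnorm{x_i-x_j}^2 = 2(x_i-x_j)\cdot(v_i-v_j)$, which follows from $\dot x_i = v_i$. This turns the attraction potential derivative into a pairwise sum of terms proportional to $a(\vnorm{x_i-x_j}^2)(x_i-x_j)\cdot(v_i-v_j)$, and the repulsion potential derivative — picking up a minus sign from differentiating the \emph{lower} limit of $\int_{\vnorm{x_i-x_j}^2}^{+\infty}$ — into a pairwise sum proportional to $-f(\vnorm{x_i-x_j}^2)(x_i-x_j)\cdot(v_i-v_j)$.

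The crux is a symmetrization argument. The attraction and repulsion groups arising from the kinetic energy are one-sided sums $\sum_{i,j} v_i\cdot(\cdots)$, whereas the potential derivatives are naturally written through the symmetric differences $(v_i-v_j)$ over unordered pairs. Exploiting that both kernels are symmetric, $a(\vnorm{x_i-x_j}^2)=a(\vnorm{x_j-x_i}^2)$ and likewise for $f$, I would relabel $i\leftrightarrow j$ to recast the one-sided kinetic sums into the symmetric $(v_i-v_j)$, $(x_i-x_j)$ form and match them against the pairwise potential sums. After this the attraction contributions annihilate one another and so do the repulsion contributions, leaving exactly $-2\sum_i b_i\vnorm{v_i}^2$. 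Finally, setting $\Lambda=0$ forces every $b_i\equiv 0$ (since $b_i:\R_+\funarrow[0,\Lambda]$), whence $\frac{d}{dt}E\equiv 0$.

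I expect the index bookkeeping in this symmetrization — aligning the one-sided kinetic sums with the pairwise potential sums and carefully tracking the combinatorial factors of two — to be the only genuine obstacle; the rest is a direct differentiation. A minor technical point worth checking is that the improper integral in the repulsion potential is differentiable along the trajectory: this is legitimate because $f$ is evaluated at strictly positive distances along a collision-free solution, and $\int_\delta^{+\infty}f(r)\,dr<+\infty$ for every $\delta>0$ by hypothesis, so the lower-limit differentiation is justified.
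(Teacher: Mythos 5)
Your strategy---differentiate the three pieces of $E$ separately, substitute the equations of motion into the kinetic term, use $\frac{d}{dt}\vnorm{x_i-x_j}^2=2(x_i-x_j)\cdot(v_i-v_j)$ on the potentials, and symmetrize in $i\leftrightarrow j$ so that the force terms cancel the potential terms---is the standard and essentially the only argument; the chapter itself gives no proof and simply cites Cucker--Dong. Your handling of the sign from the lower limit of the repulsion integral, the justification for differentiating the improper integral along a collision-free trajectory, and the deduction $\Lambda=0\Rightarrow b_i\equiv 0$ are all fine.

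However, the one step you defer, ``carefully tracking the combinatorial factors of two,'' is precisely where the computation, taken with the constants as printed in the chapter, does not close. Carrying it out: the attraction force contributes
\begin{align*}
2\sum_{i\neq j}a\bigl(\vnorm{x_i-x_j}^2\bigr)\,v_i\cdot(x_j-x_i)\;=\;-2\sum_{i<j}a\bigl(\vnorm{x_i-x_j}^2\bigr)\,(x_i-x_j)\cdot(v_i-v_j)
\end{align*}
to $\frac{d}{dt}\sum_i\vnorm{v_i}^2$, whereas differentiating the attraction potential exactly as written in \eqref{eq:energyfunction}, namely $\tfrac12\sum_{i<j}\int_0^{\vnorm{x_i-x_j}^2}a(r)\,dr$, yields only $+\sum_{i<j}a(\vnorm{x_i-x_j}^2)(x_i-x_j)\cdot(v_i-v_j)$; the two differ by a factor of two, and the analogous mismatch occurs for the repulsion terms, leaving a residue $\sum_{i<j}\bigl(f(\vnorm{x_i-x_j}^2)-a(\vnorm{x_i-x_j}^2)\bigr)(x_i-x_j)\cdot(v_i-v_j)$ that is not zero in general. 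The asserted identity forces each unordered pair to carry total weight one in the potential energy, i.e.\ the potentials must be read as $\tfrac12\sum_{i\neq j}$ (equivalently $\sum_{i<j}$ with no prefactor), which is the normalization in Cucker--Dong's Equation (3.1); the combination ``$\tfrac12\sum_{i<j}$'' in \eqref{eq:energyfunction} is a transcription slip. So your proof is correct in spirit, but as written it asserts a cancellation that fails for the energy you are nominally differentiating: you must either correct the normalization or display the factor bookkeeping explicitly, at which point the cancellation is exact and only $-2\sum_i b_i\vnorm{v_i}^2$ survives.
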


If the attraction force at far distance is very strong (for $\beta \leq 1$), despite an initial high level of kinetic energy and of repulsion potential energy, perhaps due to a space compression of the group of particles, the dynamics
is guaranteed to keep confined and collision avoiding in space at all times. If the attraction force is instead weak at far distance, i.e., $\beta>1$, then confinement and collision avoidance turn out to be properties of
the dynamics only conditionally to {\it initial} low levels of kinetic energy and repulsion potential energy, meaning that the particles should not be initially too fast and too close to each other. This latter condition is formulated in terms of a total energy
critical  threshold
\begin{align*} 
	\vartheta\define\frac{N-1}{2}\int_{0}^{+\infty}a(r)dr.
\end{align*}
This fundamental dichotomy of the dynamics has been characterized in the following result.

\begin{theorem}[{\cite[Theorem 2.1]{cucker14}}] \label{th:cuckerdong}
Consider an initial datum $(x^0,v^0)\in\R^{dN}\times\R^{dN}$ satisfying $\|x_{i}^0 - x_{j}^0\|^2 > 0$ for all $i \not = j$ and
\begin{align*} 
E(0) \define E(x^0,v^0)< \frac{1}{2} \int^{+\infty}_{0} f(r)dr.
\end{align*}
Then there exists a unique solution $(x(\cdot), v(\cdot))$ of system \eqref{eq:cuckerdong} with initial condition $(x^0, v^0)$. Moreover, if one of the two following hypotheses holds:
\begin{enumerate}
	\item $\beta \leq 1$,
	\item $\beta > 1$ and  $E(0) < \vartheta$,
\end{enumerate}
	then the population is cohesive and collision-avoiding, i.e., there exist two constants $B_0,b_0 > 0$ such that, for all $t \geq 0$
	\begin{align} \label{eq:spacecontainment}
	b_0 \leq \vnorm{x_i(t) - x_j(t)} \leq B_0, \quad \text{ for all } 1\leq i \not = j \leq N.
	\end{align}
\end{theorem}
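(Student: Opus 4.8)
The plan is to exploit the total energy $E$ as a Lyapunov functional, using Proposition \ref{pr:der_en_unc} to control its growth and then to convert bounds on $E$ into the geometric confinement and collision-avoidance bounds \eqref{eq:spacecontainment}. First I would establish local existence and uniqueness: on the open region where $\|x_i - x_j\|^2 > 0$ for all $i \neq j$, the right-hand side of \eqref{eq:cuckerdong} is locally Lipschitz (since $f$ is locally Lipschitz away from the origin and $a$, $b_i$ are Lipschitz/bounded), so the Picard--Lindel\"of theorem gives a unique maximal solution. The real content is showing this maximal solution is global, which will follow once I prove that neither a collision ($\|x_i - x_j\| \to 0$) nor a blow-up ($\|x_i - x_j\| \to \infty$ or $\|v_i\| \to \infty$) can occur in finite time.

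The key observation is that $E$ is non-increasing along trajectories: since each $b_i(t) \geq 0$, Proposition \ref{pr:der_en_unc} gives $\frac{d}{dt}E(t) = -2\sum_i b_i(t)\|v_i(t)\|^2 \leq 0$, so $E(t) \leq E(0)$ for all $t$ in the interval of existence. Now I would read off confinement and collision avoidance directly from the structure of $E$ in \eqref{eq:energyfunction}. Each of the three summands is nonnegative, so for any fixed pair $i \neq j$ one has the a priori bounds
\begin{align*}
\frac{1}{2}\int_0^{\|x_i(t)-x_j(t)\|^2} a(r)\,dr \leq E(0) \quad\text{and}\quad \frac{1}{2}\int_{\|x_i(t)-x_j(t)\|^2}^{+\infty} f(r)\,dr \leq E(0).
\end{align*}
Collision avoidance comes from the repulsion term: since $\int_\delta^{+\infty} f(r)\,dr < +\infty$ for every $\delta > 0$ but $f$ is singular at $0$ (e.g.\ $f(r)=r^{-p}$ with $p>1$, so $\int_0^\delta f = +\infty$), the bound $\int_{\|x_i-x_j\|^2}^{+\infty} f \leq 2E(0)$ forces $\|x_i(t)-x_j(t)\|^2$ to stay bounded away from $0$ by a constant $b_0^2$ depending only on $E(0)$ and $f$. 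This is precisely where the hypothesis $E(0) < \frac{1}{2}\int_0^{+\infty} f(r)\,dr$ enters: it guarantees the tail integral threshold is attained at a strictly positive distance.

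The confinement bound $\|x_i(t)-x_j(t)\| \leq B_0$ is the \emph{main obstacle} and is where the dichotomy on $\beta$ appears. When $\beta \leq 1$, the attraction is non-integrable, $\int_0^{+\infty} a(r)\,dr = +\infty$, so the first integral $\int_0^{\|x_i-x_j\|^2} a(r)\,dr$ diverges as $\|x_i-x_j\| \to \infty$; the energy bound $\int_0^{\|x_i-x_j\|^2} a \leq 2E(0)$ then caps the distance unconditionally, yielding $B_0$ from $E(0)$ alone. When $\beta > 1$, however, $\vartheta = \frac{N-1}{2}\int_0^{+\infty} a(r)\,dr < +\infty$ is finite, so this argument only confines the pair provided $E(0) < \vartheta$ keeps the attraction integral below its saturation value --- which is exactly the extra hypothesis in case (2). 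I would make this quantitative by solving for the largest admissible $\|x_i-x_j\|^2$ from the implicit inequality and checking it is finite precisely when $E(0) < \vartheta$; summing the kinetic contribution then also bounds $\|v_i(t)\|$, ruling out velocity blow-up. Finally, with $b_0 \leq \|x_i(t)-x_j(t)\| \leq B_0$ and $\|v_i\|$ uniformly bounded on the maximal interval, the solution cannot escape any compact subset of the admissible region in finite time, so by the standard continuation principle the maximal solution is global, completing the proof.
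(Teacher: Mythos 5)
Your overall skeleton (monotonicity of $E$ from Proposition \ref{pr:der_en_unc}, nonnegativity of each summand of \eqref{eq:energyfunction} giving pairwise a priori bounds, and a continuation argument for global existence) is the right one, and your treatment of uniqueness, collision avoidance, and case (1) is sound: the hypothesis $E(0) < \tfrac{1}{2}\int_0^{+\infty} f(r)\,dr$ matches exactly the single-pair coefficient $\tfrac12$ in front of the repulsion integral, so $\int_{\|x_i-x_j\|^2}^{+\infty} f \le 2E(0) < \int_0^{+\infty} f$ does force a positive lower bound $b_0$; and for $\beta\le 1$ the divergence of $\int_0^s a$ caps the distances unconditionally. (Minor point: global existence does not need the upper bound $B_0$ at all --- it follows from $b_0$ plus the fact that $\sum_i\|v_i\|^2\le E(0)$ makes positions grow at most linearly, which is consistent with the theorem asserting existence without assuming (1) or (2).)

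However, there is a genuine gap in case (2). Your single-pair bound reads $\tfrac12\int_0^{\|x_i-x_j\|^2} a(r)\,dr \le E(0)$, and ``solving for the largest admissible $\|x_i-x_j\|^2$'' from this inequality yields a finite bound precisely when $E(0) < \tfrac12\int_0^{+\infty} a(r)\,dr$, \emph{not} when $E(0) < \vartheta = \tfrac{N-1}{2}\int_0^{+\infty} a(r)\,dr$. For $N\ge 3$ the stated hypothesis is strictly weaker, and under it the single-pair inequality is satisfied for \emph{every} value of $\|x_i-x_j\|$, so it yields no bound whatsoever. The missing idea --- and the reason the factor $N-1$ appears in $\vartheta$ --- is a geometric step tying the \emph{maximal} pairwise distance to the \emph{total} attraction potential: if $(i_0,j_0)$ realizes the maximal distance $\sqrt{D}$, then for every other agent $k$ the triangle inequality forces at least one of $\|x_{i_0}-x_k\|$, $\|x_k-x_{j_0}\|$ to exceed $\sqrt{D}/2$; together with the pair $(i_0,j_0)$ itself this produces at least $N-1$ distinct pairs at squared distance $\ge D/4$, whence
\begin{align*}
E(0)\;\ge\;\frac{1}{2}\sum_{i<j}\int_0^{\|x_i-x_j\|^2} a(r)\,dr\;\ge\;\frac{N-1}{2}\int_0^{D/4} a(r)\,dr .
\end{align*}
Only now does $E(0)<\vartheta$ give $\int_0^{D/4}a < \int_0^{+\infty}a$ with a strict gap, hence a finite $B_0$ with $D/4 \le F^{-1}\bigl(2E(0)/(N-1)\bigr)$ where $F(s)=\int_0^s a$. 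Without this step your argument proves the theorem only under the stronger (and $N$-independent) hypothesis $E(0)<\tfrac12\int_0^{+\infty}a$, which coincides with $E(0)<\vartheta$ only for $N=2$.
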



Motivated by Theorem \ref{th:cuckerdong}, we will call \emph{consensus region} the set
	\begin{align*}
		C \define \set{ w \in \mathbb{R}}{ w \leq \vartheta }
	\end{align*}
	We will say that the system \eqref{eq:cuckerdong} is \emph{in the consensus region at time $t$} if $E(t) \in C$. It is an obvious corollary of Theorem \ref{th:cuckerdong} the fact that if system \eqref{eq:cuckerdong} is in the consensus region at time $T$, for some $T \geq 0$, then condition \eqref{eq:spacecontainment} is fulfilled for every $t \geq T$.
	
\begin{remark}
Theorem \ref{th:cuckerdong} is the Cucker-Dong counterpart of Theorem \ref{thm:hhk}. Indeed, for the choice of $a$ as in \eqref{eq:cuckerkernel}, Theorem \ref{thm:hhk} implies that
\begin{enumerate}[label=$(\roman*)$]
	\item if $\beta \leq 1/2$ then $a \not \in L^1(\R_+)$, therefore consensus is achieved regardless of the initial conditions;
	\item if $\beta > 1/2$ then  $a \in L^1(\R_+)$, and consensus is guaranteed only if \eqref{eq:HaHaKim} is satisfied.
\end{enumerate}
\end{remark}

\begin{remark}
Let us stress again the fact that the word \emph{consensus} must be intended here as a stable \emph{cohesion and collision-avoiding} dynamics, in the spirit of the conclusion of Theorem \ref{th:cuckerdong}. This is in contrast with the meaning of the word \emph{consensus} in Definition \ref{def:consensus}, which describes a situation where all the agents move according to the same velocity vector. We point out that this definition of \emph{consensus} does not imply this particular feature, but it is rather intended to make a parallel between Theorem \ref{th:cuckerdong} and Theorem \ref{thm:hhk}, as already done by the authors in \cite[Remark 1]{cucker14}.
\end{remark}

As for the model \eqref{eq:cuckersmale} we could construct non-consensus events if one violates the sufficient condition \eqref{eq:HaHaKim}, also for the model \eqref{eq:cuckerdong}  and in violation of the threshold $E(0) <\vartheta$, one can exhibit non-cohesion events.

\begin{example}[Non-cohesion events \cite{cucker14}] 
Consider $N=2$, $d=2$, $\beta > 1$, $f \equiv 0$, $b_i \equiv 0$, and $x(t) = x_1(t) - x_2(t)$, $v(t) = v_1(t) - v_2(t)$ relative position and velocity of two agents on the line. Then we may rewrite the system as
\begin{equation}\label{counterexcd}
\left\{
\begin{split}
\dot x &= v\\
\dot v &= -\frac{x}{(1+x^{2})^\beta}.
\end{split}
\right.
\end{equation}
For the sake of compactness, we introduce the quantity
$$
\Psi(x) \define \frac{1}{(\beta-1)(1+ x^2)^{\beta-1}} \quad \text{ for every } x \in \R^2.
$$
We now prove that, if we are given the initial conditions $x(0)=x_{0}>0$ and $v(0)=v_{0} > 0$ satisfying 
$v(0)^2 \geq \Psi(x(0)),$
then $x(t) \to +\infty$ for $t \to +\infty$. Indeed, by direct integration in \eqref{counterexcd} one obtains
$v(t)^2 = \Psi(x(t)) + v(0)^2 - \Psi(x(0)),$ 
and it follows that $v(t)>0$ for all $t \geq 0$. This implies that $x(\cdot)$ is increasing: had this function an upper bound $x_*$, then we would have $\dot x(t) = v(t) \geq (\Psi(x_*) + v(0)^2 - \Psi(x(0)))^{1/2}$, which in turn implies 
$x(t) \to +\infty$ for $t\to +\infty$, a contradiction.
\end{example}


\section{Sparse control of the Cucker-Dong model}

Notice the similarity of the present situation and that of Section \ref{sec:cuckersparse}: in both cases we have a system whose desired pattern can be enforced by decreasing a certain Lyapunov functional under the action of a sparse intervention. Given a positive constant $M$ modeling the limited resources given to the external policy maker to influence instantaneously the dynamics, it is very natural to define the set of \textit{admissible controls} precisely as in Definition \ref{def:admcontrols}: a control $u:\R_+ \funarrow \R^{dN}$ is admissibile if it is a measurable functions which satisfies the $\ell^N_1 - \ell^d_2$-norm constraint \eqref{eq:controlbounded} for every $t \geq 0$. 
Hence, the \textit{controlled} Cucker-Dong model  is given by
\begin{align}
\left\{\begin{aligned}
\begin{split} \label{eq:cuckerdongcontrol}
\dot{x}_{i}(t) & = v_{i}(t), \\
\dot{v}_{i}(t) & = -b_{i}(t)v_{i}(t)\! + \!\sum_{j = 1}^N a\left(\vnorm{x_{i}(t) \! -\! x_{j}(t)}^{2}\right)\left(x_{j}(t)\!-\!x_{i}(t)\right)+ \\
&\quad + \sum_{\stackrel{j = 1}{i \not = j}}^N f\left(\vnorm{x_{i}(t)\!-\!x_{j}(t)}^{2}\right)\left(x_{i}(t)\!-\!x_{j}(t)\right) \!+\! u_i(t),
\end{split}\quad i = 1,\ldots,N,
\end{aligned}\right.
\end{align}
where $u$ is admissible. 

The control should be exerted until $E(T) < \vartheta$ at some finite time $T$, and then it should be turned off, similarly to the sparse selection of Definition \ref{cuckersparsecontrol}. Since we start from $E(0) > \vartheta$, then it is necessary that our control forces the total energy to decrease, for instance by ensuring $\frac{d}{dt}E < 0$. The following technical result helps us to identify the form of admissible controls satisfying this property. 

\begin{lemma} \label{le:derivativeenergyleq}
Suppose there exists a solution of the system \eqref{eq:cuckerdongcontrol}. 
Then
\begin{align} \label{eq:derivativeenergyleq}
\frac{d}{dt}E(t) & = -2 \sum^N_{i = 1}{b_i(t)\|v_i(t)\|^2} + 2 \sum^N_{i = 1}{u_i(t)\cdot v_i(t)} \quad \text{ for every } t \geq 0.
\end{align}
\end{lemma}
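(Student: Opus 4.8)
The plan is to reduce the claim to the uncontrolled identity of Proposition \ref{pr:der_en_unc}, exploiting two structural facts: the control $u$ enters the controlled system \eqref{eq:cuckerdongcontrol} additively in the velocity equation, and the energy $E$ of \eqref{eq:energyfunction} depends on the velocities only through the kinetic term $\sum_{i=1}^N\vnorm{v_i}^2$. First I would fix a solution $(x(\cdot),v(\cdot))$ of \eqref{eq:cuckerdongcontrol} and write its velocity field as $\dot v_i(t) = F_i(x(t),v(t)) + u_i(t)$, where $F_i$ denotes exactly the right-hand side of the uncontrolled model \eqref{eq:cuckerdong} (friction, attraction and repulsion). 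Since the existence of the solution is assumed, the trajectory stays collision-free on its interval of definition, so $E$ is of class $\mathcal{C}^1$ near every point of the trajectory and $t\mapsto E(t)$ is absolutely continuous; hence the chain rule is available at almost every $t$.

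Next I would differentiate and separate the position- from the velocity-gradients, using that the only velocity-dependence of $E$ is kinetic, so that $\nabla_{v_i}E = 2v_i$. Substituting $\dot v_i = F_i + u_i$ splits the derivative into a part built only from $F_i$ and a part built only from $u_i$:
\begin{align*}
\frac{d}{dt}E(t) = \left(\sum_{i=1}^N \nabla_{x_i}E\cdot v_i(t) + \sum_{i=1}^N \nabla_{v_i}E\cdot F_i\right) + \sum_{i=1}^N 2\, u_i(t)\cdot v_i(t).
\end{align*}
The bracketed quantity is precisely the value that $\frac{d}{dt}E$ would take along a trajectory of the uncontrolled model \eqref{eq:cuckerdong} passing through the current state $(x(t),v(t))$, since both $\nabla E$ and the field $F$ are functions of the state alone. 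By Proposition \ref{pr:der_en_unc} this bracket equals $-2\sum_{i=1}^N b_i(t)\vnorm{v_i(t)}^2$, and the remaining term is $2\sum_{i=1}^N u_i(t)\cdot v_i(t)$, which is exactly \eqref{eq:derivativeenergyleq}.

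I expect the only delicate point to be the regularity bookkeeping rather than the algebra: one must guarantee that $E$ is genuinely differentiable along the trajectory, so that the singular repulsion $f$ is never evaluated at a collision, and one must justify the chain rule when $u$ is merely measurable, thereby obtaining the identity for almost every $t$ (at every $t$ at which the absolutely continuous trajectory is differentiable). The algebraic cancellation of the attraction and repulsion contributions is exactly the symmetrisation of the double sums already performed for Proposition \ref{pr:der_en_unc}, which is why I would invoke that result directly instead of repeating it, isolating only the new, control-dependent term $2\sum_{i=1}^N u_i\cdot v_i$.
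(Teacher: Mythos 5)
Your argument is correct and is essentially the intended one: the paper states Lemma~\ref{le:derivativeenergyleq} without proof (deferring to \cite{bofo13}), and the natural derivation is exactly your computation --- differentiate $E$ along the trajectory, use that the velocity dependence of $E$ is purely kinetic so $\nabla_{v_i}E = 2v_i$, let the additive control contribute $2\sum_i u_i\cdot v_i$, and absorb the remaining (state-dependent) bracket into the uncontrolled identity of Proposition~\ref{pr:der_en_unc}. Your remark that with a merely measurable $u$ the identity holds only at points of differentiability of the absolutely continuous trajectory (hence a.e., and everywhere up to finitely many points for the piecewise constant controls actually used) is an accurate reading of the ``for every $t\geq 0$'' in the statement, not a gap.
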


\subsection{Extending the sparse control strategy}

From expression \eqref{eq:derivativeenergyleq}, it is clear that the best way our control can act on $E$ in order to push it below the threshold is not acting on the mutual distances between agents, but according to the velocities $v$. Hence, we focus on the following family of controls, closely resembling \eqref{eq:formcontrolcucker} .

\begin{definition} \label{def:dongcontrol}
Let $(x,v) \in \R^{dN}\times\R^{dN}$ and $0 \leq \eps \leq M/E(0)$. We define the {\it sparse feedback control} $u(x,v) = (u_1(x,v), \ldots, u_N(x,v))^T \in \R^{dN}$ associated to $(x,v)$ as
\begin{align*}
u_i(x,v) \define
\begin{cases}
	\displaystyle - \eps E(x,v) \frac{v_{\maxindex(x,v)}}{\|v_{\maxindex(x,v)}\|} & \quad \mbox{  if } i = \maxindex(x,v), \\
	0 & \quad \mbox{ otherwise.}
	\end{cases}
\end{align*}
where $\maxindex(x,v)$ is the minimum index such that
\begin{align*}
\|v_{\maxindex(x,v)}\| =\max_{1\leq j \leq N} \|v_j\|.
\end{align*}
\end{definition}

Whenever the point $(x,v)$ is a point of a curve $(x,v): \R_+ \rightarrow \R^{dN}\times\R^{dN}$, i.e. $(x,v) = (x(t), v(t))$ for some $t \geq 0$, we will replace everywhere $u(x,v)=u(x(t),v(t))$ and $\maxindex(x,v) = \maxindex(x(t),v(t))$ with $u(t)$ and $\maxindex(t)$, respectively.

\begin{remark}
Definition \ref{def:dongcontrol} makes sense if $\|v_{\maxindex(t)}(t)\| \not = 0$ for at least almost every $t \geq 0$.  Notice that, if the latter condition were not holding, then $v_{i}(t) = 0$ for all $i = 1, \ldots, N$ and for all $t \geq 0$, hence $\dot{v}_{i}(t) = 0$ for all $i = 1, \ldots, N$ and for all $t \geq 0$, hence the configuration of the system would be in a steady state and no control would be needed.
\end{remark}

The parameter $\eps$ will help us to tune the control in order to ensure the convergence to the consensus region. 
Indeed, notice that if we were able to prove that $\vnorm{\overline{v}(t)} \geq \eta$ holds for every $t \geq 0$ for some $\eta > 0$, then it would follow that
\begin{align*}
	\frac{d}{dt}E(t) & \leq 2 \left(-\eps E(t) \frac{v_{\maxindex(t)}(t)}{\vnorm{v_{\maxindex(t)}(t)}}\right)\cdot v_{\maxindex(t)}(t) = - 2\eps E(t) \vnorm{v_{\maxindex(t)}(t)}  \leq - 2 \eps \eta E(t),
\end{align*}
from which we obtain the estimate $E(t) \leq E(0) e^{-2\eps \eta t}$ for every $t \geq 0$. Therefore it follows that $E$ is decreasing: this in turn implies that, whenever $\eps \leq M/E(0)$ holds, we have
\begin{align*}
\sum^N_{i = 1}\|u_i(t)\|=\eps E(t) \leq \frac{M}{E(0)}E(t) \leq M,
\end{align*}
whence the validity of the constraint \eqref{eq:controlbounded}. Therefore, the control of Definition \ref{def:dongcontrol} is admissible.

By exploiting several nontrivial a priori estimates for stability (collected in \cite[Section 3.2]{bofo13}), which were not necessary for system \eqref{systemmatrix} due to its dissipative nature, we obtain the following result, which resembles closely Theorem \ref{greedycontr}.

\begin{theorem}[{\cite[Theorem 4.1 and Proposition 4.2]{bofo13}}] \label{th:mainresult}
Fix $M > 0$. Let $(x^0, v^0) \in \R^{dN}\times\R^{dN}$ be such that the following hold:
\begin{description}
	\item[\quad \quad $(a)$] $\|\overline{v}(0)\| \geq \eta > 0$;
	\item[\quad \quad $(b)$] for $$c \define \exp \left(-\frac{2\sqrt{3}}{9}\frac{M\|\overline{v}(0)\|^3}{E(0) \sqrt{E(0)} \left(\Lambda \sqrt{E(0)} + \frac{M}{N}\right)} \right)$$ it holds $c\vartheta > E(0) > \vartheta$.
\end{description}
Then there exist constants $T > 0$ and $\Gamma = \Gamma(x^0,v^0,\vartheta,\eta,c) > 0$, 
and a \textit{piecewise constant} selection of the sparse feedback control of Definition \ref{def:dongcontrol} 
such that
\begin{description}
	\item[\quad \quad $(1.)$] $\|\overline{v}(t)\| \geq \eta$ for every $t \leq T$;
	\item[\quad \quad $(2.)$] whenever $\Gamma \leq \eps \leq M/E(0)$ holds, the associated absolutely continuous solution reaches the consensus region before time $T$.
\end{description}
\end{theorem}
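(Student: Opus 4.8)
The design of the feedback in Definition \ref{def:dongcontrol} is tailored so that, by Lemma \ref{le:derivativeenergyleq}, the energy obeys $\frac{d}{dt}E(t) = -2\sum_i b_i(t)\|v_i(t)\|^2 + 2\, u_{\maxindex(t)}(t)\cdot v_{\maxindex(t)}(t) \leq -2\eps E(t)\|v_{\maxindex(t)}(t)\|$. Since $\|v_{\maxindex(t)}(t)\| = \max_j\|v_j(t)\| \geq \|\overline v(t)\|$, the computation preceding the theorem shows that \emph{as long as} $\|\overline v(t)\| \geq \eta$ one has $\frac{d}{dt}E(t) \leq -2\eps\eta E(t)$, hence $E(t) \leq E(0)e^{-2\eps\eta t}$; monotonicity of $E$ then gives $\eps E(t) \leq \eps E(0) \leq M$, so the control stays admissible. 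Combined with Theorem \ref{th:cuckerdong}, which secures cohesion and collision avoidance once $E$ enters the consensus region $C = \set{w \in \R}{w \leq \vartheta}$, the whole problem collapses to a single question: can we keep $\|\overline v(t)\|\geq\eta$ long enough for the exponentially decaying $E$ to drop below $\vartheta$? The plan is therefore to work on the maximal interval $[0,T]$ on which $\|\overline v\|\geq\eta$ holds --- nonempty and relatively open by hypothesis $(a)$ and continuity --- and to show that $E$ reaches $C$ strictly before $T$.

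The first concrete step is to control the drift of the mean velocity. Summing the right-hand side of \eqref{eq:cuckerdongcontrol} over $i$, the internal attraction and repulsion forces cancel pairwise by the (anti)symmetry of $a$, $f$ and of $x_i - x_j$, leaving $N\frac{d}{dt}\overline v(t) = -\sum_i b_i(t) v_i(t) + u_{\maxindex(t)}(t)$. Using $b_i\leq\Lambda$, the bound $\|v_i\|\leq\sqrt{E}$ coming from the nonnegativity of the potential terms in \eqref{eq:energyfunction}, and $\|u_{\maxindex(t)}\| = \eps E$, this yields the pointwise estimate
\begin{align*}
\left\|\frac{d}{dt}\overline v(t)\right\| \leq \Lambda\sqrt{E(t)} + \frac{\eps E(t)}{N}.
\end{align*}
On $[0,T]$ I would feed in the exponential decay $E(t) \leq E(0)e^{-2\eps\eta t}$ obtained above and integrate, producing a lower bound of the form $\|\overline v(t)\| \geq \|\overline v(0)\| - G(t)$, with $G$ an explicit increasing function built from $\Lambda\sqrt{E(0)}$ and $M/N$ that stays controlled because the integrand itself decays. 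The refined a priori stability estimates of \cite[Section 3.2]{bofo13} enter precisely here, sharpening these integral bounds into the exact form that produces the factor $\tfrac{2\sqrt{3}}{9}$ and the cube $\|\overline v(0)\|^3$ in the constant $c$.

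It then remains to close the bootstrap. The two requirements pull against each other: a large rate $\eps$ makes $E$ decay quickly (good) but enlarges the drift of $\overline v$ (bad), while the decay of $E$ must outrun the erosion of $\|\overline v\|$ toward $\eta$. Quantitatively, $E$ reaches $\vartheta$ around time $T_1 \approx \frac{1}{2\eps\eta}\log\frac{E(0)}{\vartheta}$, and one needs $T_1 < T$, i.e.\ $\|\overline v(T_1)\|$ still $\geq\eta$. Choosing $\eps$ in the admissible window $[\Gamma, M/E(0)]$ --- with $\Gamma = \Gamma(x^0,v^0,\vartheta,\eta,c)$ the threshold making the decay fast enough --- together with the proximity-to-threshold hypothesis $(b)$ (which constrains how far above $\vartheta$ the initial energy $E(0)$ is allowed to be, as measured by $c$) makes this inequality hold. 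This simultaneously establishes claim $(1.)$, that $\|\overline v(t)\|\geq\eta$ on all of $[0,T]$, and claim $(2.)$, that the trajectory enters $C$ before $T$.

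The main obstacle is exactly this circular coupling between the decay of $E$ and the persistence of $\|\overline v\|\geq\eta$: neither estimate holds unconditionally, yet each is needed to prove the other. Resolving it requires the maximal-interval continuity argument above, in which $\|\overline v\|\geq\eta$ is \emph{assumed} on $[0,T]$, used to derive the exponential decay of $E$, and then fed back to show that $\|\overline v\|$ cannot in fact have dropped to $\eta$ before $E$ entered $C$; the quantitative hypothesis $(b)$ is what makes this closed loop self-consistent. A secondary technical point, which I would handle as in Theorem \ref{greedycontr}, is that the feedback of Definition \ref{def:dongcontrol} is discontinuous because the active index $\maxindex(x,v)$ jumps; one must therefore construct a \emph{piecewise constant} selection --- freezing the controlled agent on short time intervals and passing to absolutely continuous Carath\'eodory solutions --- and verify that this discretization preserves all the preceding estimates up to time $T$.
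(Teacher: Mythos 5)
Your proposal follows the same route the paper indicates for this result: the closed loop between the exponential energy decay $E(t)\le E(0)e^{-2\eps\eta t}$ (valid while $\|\overline v(t)\|\ge\eta$, via $\|v_{\maxindex(t)}(t)\|\ge\|\overline v(t)\|$), the admissibility check $\eps E(t)\le M$, the drift bound $\|\tfrac{d}{dt}\overline v(t)\|\le\Lambda\sqrt{E(t)}+\eps E(t)/N$ coming from the antisymmetry of the internal forces, and the continuation argument closed by hypothesis $(b)$ --- which is exactly the scheme of \cite[Section 3.2 and Theorem 4.1]{bofo13} that the text summarizes. One small correction to your heuristic: a larger $\eps$ does not worsen the integrated drift of $\overline v$ (the control contribution integrates to at most $E(0)/(2N\eta)$ independently of $\eps$, while the friction contribution scales like $1/\eps$), which is precisely why $\Gamma$ enters as a \emph{lower} bound on $\eps$ and the only upper constraint is the admissibility cap $M/E(0)$ --- consistent with $M$ appearing in the numerator of $-\log c$.
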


We remark that, while the stabilization of Cucker-Smale systems by means of sparse feedback controls is unconditional with respect to the initial conditions (see Theorem \ref{greedycontr}), for the Cucker-Dong model our analysis guarantees stabilization only within certain total energy levels, which is suggesting that also stabilization can be conditional. However, the numerical experiments reported in Section \ref{sec:dongnumerics} suggest that it is possible to exceed such an upper energy barrier in many cases, even if there are pathological situations for which there is no hope to steer the agents towards a cohesive configuration.

\subsection{Optimality of the sparse feedback control}

We now pass to show that the sparse feedback control of Definition \ref{def:dongcontrol} is a minimizer of a variational criterion similar to \eqref{eq:variationalprinciple}. To this end, notice that each value of $\eta \geq 0$ appearing in Theorem \ref{th:mainresult} yields a partition of $\R^{dN}\times\R^{dN}$ into four disjoint sets:
\begin{description}
	\item[$\mathcal{P}_1$] $\define \{ (x,v) \in \R^{dN}\times\R^{dN} : \max_{1 \leq i \leq N} \vnorm{v_i} < \eta \}$,
	\item[$\mathcal{P}_2$] $\define \{ (x,v) \in \R^{dN}\times\R^{dN} : \max_{1 \leq i \leq N} \vnorm{v_i} = \eta \text{ and } \exists k \geq 1 \text{ and } i_1, \ldots, i_k \break\in \{1, \ldots N\}$ $\text{ such that } \vnorm{v_{i_1}} = \ldots = \vnorm{v_{i_k}} \text{ and } \vnorm{v_{i_1}} > \vnorm{v_j} \text{ for every } j \not \in \{i_1, \ldots, i_k\}\}$,
	\item[$\mathcal{P}_3$] $\define \{ (x,v) \in \R^{dN}\times\R^{dN} : \max_{1 \leq i \leq N} \vnorm{v_i} > \eta \text{ and } \exists ! i \in \{1, \ldots N\} \text{ such} \\ \text{ that } \vnorm{v_i} > \vnorm{v_j} \text{ for every } j \not = i \}$,
	\item[$\mathcal{P}_4$] $\define \{ (x,v) \in \R^{dN}\times\R^{dN} : \max_{1 \leq i \leq N} \vnorm{v_i} > \eta \text{ and } \exists k > 1 \text{ and } i_1, \ldots, i_k\break \in \{1, \ldots N\}$ $\text{ such that } \vnorm{v_{i_1}} = \ldots = \vnorm{v_{i_k}} \text{ and } \vnorm{v_{i_1}} > \vnorm{v_j} \text{ for every } j \not \in \{i_1, \ldots, i_k\} \}$,
\end{description}

The above partition naturally leads to the following class of feedback controls.

\begin{definition} \label{def:convex_control}
For every $(x, v) \in \R^{dN}\times\R^{dN}$ we denote with $U(x, v) \subseteq \R^{dN}$ the set of all vectors $u(x,v)=(u_1(x,v), \dots, u_N(x,v))^T \in \R^{dN}$, whose vector entries are of the form
\begin{align*}
u_i(x,v) = \begin{cases}
	\displaystyle -\eps_i E(x,v) \frac{v_i}{\|v_i\|} &\quad \mbox{  if } \|v_i\| \not = 0, \\
	0 &\quad \mbox{  if } \|v_i\| = 0,
	\end{cases}
\end{align*}
where the coefficients $\eps_i \geq 0$ satisfy
\begin{align*}
\sum^N_{i = 1} \eps_i \leq \frac{M}{E(0)},
\end{align*}
and
\begin{itemize}
\item if $(x,v) \in \mathcal{P}_1$ then $\eps_i = 0$ for every $i = 1, \ldots, N$;
\item if $(x,v) \in \mathcal{P}_2$ then indicating with $i_1, \ldots, i_k$ the indexes such that $\vnorm{v_{i_1}} = \ldots = \vnorm{v_{i_k}} = \eta$ and $\vnorm{v_{i_1}} > \vnorm{v_j}$ for every  $j \not \in \{i_1, \ldots, i_k\}$, we have $\eps_j = 0$ for every  $j \not \in \{i_1, \ldots, i_k\}$;
\item if $(x,v) \in \mathcal{P}_3$ then, indicating with $i$ the only index such that $\vnorm{v_i} > \vnorm{v_j}$ for every  $j \not = i$, we have $\eps_i = M/E(0)$ and $\eps_j = 0$ for every  $j \not = i$;
\item if $(x,v) \in \mathcal{P}_4$ then, indicating with $i_1, \ldots, i_k$ the indexes such that $\vnorm{v_{i_1}} = \ldots = \vnorm{v_{i_k}}$ and $\vnorm{v_{i_1}} > \vnorm{v_j}$ for every  $j \not \in \{i_1, \ldots, i_k\}$, we have $\eps_j = 0$ for every  $j \not \in \{i_1, \ldots, i_k\}$ and $\sum^k_{\ell = 1} \eps_{i_{\ell}} = M/E(0)$.
\end{itemize}
\end{definition}

\begin{remark} \label{rem:u_enlarged}
	Under the hypotheses of Theorem \ref{th:mainresult}, the control $u(t)$ introduced in Definition \ref{def:dongcontrol} belongs to $U(x(t),v(t))$ whenever $t \leq T$, since it is guaranteed that $\max_{i \leq i \leq N} \vnorm{v_i(t)} \geq \eta$ for every $t \leq T$.
\end{remark}

The set $U(x, v)$ is closed and convex, and, moreover, has the following very elegant alternative variational interpretation, reminiscent of Definition \ref{def:cuckervariational}.

\begin{proposition}{\cite[Propositions 5.2 and 5.4]{bofo13}} \label{prop:alternativeU}
For every $(x, v) \in \R^{dN}\times\R^{dN}$ and for every $M \geq 0$, set
\begin{align*}
m(x,v) \define M \frac{E(x,v)}{E(0)} \quad \text{ and } \quad K(x,v) \define \set{u \in \R^{dN}}{\sum^N_{i = 1} \vnorm{u_i} \leq m(x,v)}.
\end{align*}
Let $\mathcal{J}: \R^{dN} \funarrow \mathbb{R}$ be the functional defined by
\begin{align*}
\mathcal{J}(u,v) \define v \cdot u + \eta \sum^N_{i = 1} \vnorm{u_i}.
\end{align*}
Then
\begin{align*}
U(x,v) & = \argmin_{u \in K(x,v)} \mathcal{J}(u,v).
\end{align*}
\end{proposition}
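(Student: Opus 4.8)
The plan is to exploit the fact that $\mathcal{J}(u,v) = \sum_{i=1}^N\left(v_i\cdot u_i + \eta\vnorm{u_i}\right)$ decouples across agents once the directions of the $u_i$ are fixed, reducing the vector minimization to a scalar linear program in the magnitudes $\rho_i := \vnorm{u_i}$. First I would show that every minimizer must point antiparallel to $v_i$: by Cauchy--Schwarz, $v_i\cdot u_i \geq -\vnorm{v_i}\vnorm{u_i}$, hence
\[
\mathcal{J}(u,v) \;\geq\; \sum_{i=1}^N (\eta - \vnorm{v_i})\,\vnorm{u_i},
\]
with equality precisely when $u_i = -\rho_i v_i/\vnorm{v_i}$ for some $\rho_i \geq 0$ at every index with $v_i \neq 0$ (and, recalling $\eta>0$, with $u_i = 0$ whenever $v_i = 0$). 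This is exactly the radial form appearing in Definition \ref{def:convex_control}, so the problem collapses to minimizing $g(\rho) := \sum_{i=1}^N(\eta - \vnorm{v_i})\rho_i$ over the set $\{\rho \in \R^N : \rho_i \geq 0,\ \sum_i \rho_i \leq m(x,v)\}$.

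Next I would solve this linear program by a case analysis keyed to the sign of the coefficients $\eta - \vnorm{v_i}$, that is, to the four sets $\mathcal{P}_1,\ldots,\mathcal{P}_4$. Writing $\mu := \max_i \vnorm{v_i}$ and $I^\ast := \{i : \vnorm{v_i} = \mu\}$: if $\mu < \eta$ (the set $\mathcal{P}_1$) all coefficients are strictly positive, so the unique minimizer is $\rho \equiv 0$; if $\mu = \eta$ (the set $\mathcal{P}_2$) the coefficients are nonnegative and vanish exactly on $I^\ast$, so the minimizers are those $\rho$ supported on $I^\ast$ with $\sum_i\rho_i \leq m(x,v)$, the budget not needing to be saturated; and if $\mu > \eta$ (the sets $\mathcal{P}_3$ and $\mathcal{P}_4$) the most negative coefficient is $\eta - \mu < 0$, forcing all mass onto $I^\ast$ and saturating the budget $\sum_i\rho_i = m(x,v)$. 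The two elementary exchange arguments needed here are: any mass placed at $j \notin I^\ast$ can be shifted to some $i \in I^\ast$, changing $g$ by $\rho_j(\vnorm{v_j}-\mu) < 0$; and any unused budget can be loaded onto $I^\ast$, again strictly decreasing $g$. For $\mathcal{P}_3$ the set $I^\ast$ is a singleton $\{\maxindex\}$, yielding $\rho_{\maxindex} = m(x,v)$, while for $\mathcal{P}_4$ any split of $m(x,v)$ over $I^\ast$ is optimal; these match the prescriptions on the $\eps_i$ verbatim.

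Finally I would translate the magnitude variables back via the bijection $\rho_i = \eps_i E(x,v)$, which is legitimate because $E(x,v) > 0$ and maps the budget constraint $\sum_i\rho_i \leq m(x,v) = M E(x,v)/E(0)$ exactly onto $\sum_i \eps_i \leq M/E(0)$. Under this identification the argmin set computed above coincides termwise with the set $U(x,v)$ of Definition \ref{def:convex_control}, establishing both inclusions at once. The only genuinely delicate points, rather than the mechanics, are that the argmin is set-valued on $\mathcal{P}_2$ and $\mathcal{P}_4$, so I must verify that the combinatorial description of $U(x,v)$ captures \emph{every} optimal $\rho$ and not merely one selection, and that it distinguishes the cases in which the resource budget must be fully spent ($\mathcal{P}_3,\mathcal{P}_4$) from those in which slack is permitted ($\mathcal{P}_1,\mathcal{P}_2$); the antiparallel-direction step must also be handled with care at indices where $v_i = 0$ or $\rho_i = 0$, where the direction is unconstrained but contributes nothing. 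This bookkeeping across the four regions is the main obstacle, the underlying optimization being otherwise a routine linear program.
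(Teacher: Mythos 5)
Your argument is correct: the Cauchy--Schwarz reduction to the scalar linear program in the magnitudes $\rho_i=\vnorm{u_i}$, the exchange arguments over the four regions $\mathcal{P}_1$--$\mathcal{P}_4$, and the rescaling $\rho_i=\eps_i E(x,v)$ together establish both inclusions, and you correctly flag the only delicate points (set-valuedness on $\mathcal{P}_2$ and $\mathcal{P}_4$, budget saturation only when $\max_i\vnorm{v_i}>\eta$, and the degenerate indices with $v_i=0$ or $\rho_i=0$). The chapter itself states this proposition without proof, deferring to \cite[Propositions 5.2 and 5.4]{bofo13}, but your case analysis tracks exactly the partition the paper sets up in Definition \ref{def:convex_control}, so this is the intended route rather than a genuinely different one.
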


The next result is the Cucker-Dong counterpart of Proposition \ref{prop:sparseoptimal}: the sparse feedback control minimizes the decay rate of the functional $E$ among the controls introduced in Definition \ref{def:convex_control}.

\begin{theorem} \label{th:optimality}
	The feedback control of Definition \ref{def:dongcontrol} is an instantaneous minimizer of
	\begin{align*}
		\mathcal{D}(t, u) \define \frac{d}{dt}E(t)
	\end{align*}
	over all possible feedback controls $u \in U(x(t), v(t))$.
\end{theorem}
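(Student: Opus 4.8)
The plan is to reduce the minimization of $\mathcal{D}(t,u) = \frac{d}{dt}E(t)$ to a transparent linear optimization over the scalar coefficients $\eps_i$ that parametrize the controls in $U(x(t),v(t))$. First I would invoke Lemma \ref{le:derivativeenergyleq}, which yields
\begin{align*}
\frac{d}{dt}E(t) = -2\sum_{i=1}^N b_i(t)\|v_i(t)\|^2 + 2\sum_{i=1}^N u_i(t)\cdot v_i(t).
\end{align*}
The friction term is independent of $u$, so minimizing $\mathcal{D}(t,u)$ over $u \in U(x(t),v(t))$ is equivalent to minimizing the single sum $\sum_{i=1}^N u_i(t)\cdot v_i(t)$.

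Next I would substitute the explicit form of an arbitrary $u \in U(x,v)$ from Definition \ref{def:convex_control}. For each index $i$ with $\|v_i\|\neq 0$ one computes $u_i\cdot v_i = -\eps_i E(x,v)\|v_i\|$, while the contribution vanishes when $\|v_i\|=0$; hence
\begin{align*}
\sum_{i=1}^N u_i\cdot v_i = -E(x,v)\sum_{i=1}^N \eps_i\|v_i\|.
\end{align*}
Since $E(x,v)>0$, minimizing the left-hand side amounts to \emph{maximizing} $\sum_{i=1}^N \eps_i\|v_i\|$ subject to the constraints defining $U(x,v)$, namely $\eps_i\geq 0$, $\sum_{i=1}^N \eps_i\leq M/E(0)$, together with the partition-dependent requirement that $\eps_j = 0$ for every index $j$ that does not attain the maximal norm $\max_{1\leq i\leq N}\|v_i\|$.

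The core of the argument is then a one-line linear-programming observation: under these constraints every admissible choice of $\eps$ satisfies $\sum_{i=1}^N \eps_i\|v_i\| \leq \bigl(\max_{1\leq i\leq N}\|v_i\|\bigr)\cdot(M/E(0))$, because only indices whose velocity norm equals the maximum may carry positive weight. The sparse control of Definition \ref{def:dongcontrol} (with $\eps = M/E(0)$) places the entire budget on the single index $\maxindex$, which attains the maximal norm, and therefore meets this upper bound with equality. Consequently it maximizes $\sum_{i=1}^N \eps_i\|v_i\|$ and minimizes $\mathcal{D}(t,u)$. I would treat the partitions $\mathcal{P}_2,\mathcal{P}_3,\mathcal{P}_4$ uniformly in this way, the case $\mathcal{P}_1$ being excluded because Remark \ref{rem:u_enlarged} together with the hypotheses of Theorem \ref{th:mainresult} guarantees $\max_{1\leq i\leq N}\|v_i(t)\|\geq \eta$ along the trajectory, so that the sparse control indeed lies in $U(x(t),v(t))$.

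The computations are routine and the result is essentially an extreme-point principle for a linear functional over a simplex-type budget set; the only points requiring care are the legitimacy of the reduction (which rests on Lemma \ref{le:derivativeenergyleq} and the assumed existence of a solution) and the bookkeeping across partitions. In particular, I would verify that ties in the maximal velocity norm — partition $\mathcal{P}_4$ and the boundary case $\mathcal{P}_2$ — do not alter the optimal value, since all indices in a tied set share the same $\|v_i\|$ and hence contribute equally regardless of how the budget is distributed among them, leaving the sparse selection optimal.
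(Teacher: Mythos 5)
Your argument is correct and is essentially the intended one: Lemma \ref{le:derivativeenergyleq} isolates the control-dependent term $2\sum_{i}u_i\cdot v_i$, the parametrization of Definition \ref{def:convex_control} turns its minimization into maximizing $\sum_i \eps_i\|v_i\|$ over the budget set, and the extreme-point observation (all admissible weight sits on indices attaining $\max_i\|v_i\|$, so concentrating the full budget $M/E(0)$ there is optimal, with ties in $\mathcal{P}_2,\mathcal{P}_4$ harmless) is exactly the mechanism behind the paper's claim, paralleling Proposition \ref{prop:sparseoptimal}. Your handling of the $\mathcal{P}_1$ case via Remark \ref{rem:u_enlarged} and your explicit choice $\eps=M/E(0)$ (needed for membership in $U$ on $\mathcal{P}_3,\mathcal{P}_4$) are the right bookkeeping points, and no gap remains.
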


Similarly to what we have seen in the case of the Cucker-Smale system, the previous result shows that the most effective control strategy that the external policy maker can enact is to allocate all the resources at its disposal only on very few key agents in the system, in order to keep the dynamics bounded and collision avoiding. One of the most relevant differences with respect to Theorem \ref{greedycontr}, though, is that for the Cucker-Smale model the stabilization can be achieved unconditionally, i.e., independently
of the initial conditions $(x^0,v^0)$. For the Cucker-Dong model, instead, a similar sparse control strategy yields only a conditional results, i.e., we obtain stabilization conditionally to an initial energy level satisfying
$\vartheta < E(0) < c \vartheta,$
as stated in condition ($b$) of Theorem \ref{th:mainresult}. Our numerical experiments, which follow below, suggest that it is possible to exceed such an upper energy barrier, but it is unclear whether this is just a matter of fortunate
choices of good initial conditions or we can actually have a broader stabilization range than the one analytically derived above.

\subsection{Numerical validation of the sparse control strategy}\label{sec:dongnumerics}

In this section we will report the results of significant numerical simulations on Cucker-Dong systems in dimension $d = 2$ with and without the use of the sparse control strategy outlined in Definition \ref{def:dongcontrol}. Throughout the section, we will keep fixed the number of agents ($N = 8$), the friction applied ($\Lambda = 0$, i.e., frictionless) and the form of the repulsive function ($f(r) = r^{-p}$). We restrict only to $N = 8$ simply for an easier visualization of the results. This means that we will vary the shape of the function $a$ (i.e., we will act on $\beta$), the slope of the repulsion function (changing the value of $p$) and the maximum amount of strength of the sparse control (the parameter $M$). The parameter $\eps$ is always set equal to $M/E(0)$.

\subsubsection{The effect of sparse controls on the system}

Figure \ref{fig:uncontr1_posvel} displays the spatial evolution and speeds of the agents of a Cucker-Dong system with $\beta = 1.1$ and $p=2$:

\begin{figure}[!htb]
\centering
\includegraphics[width=\linewidth]{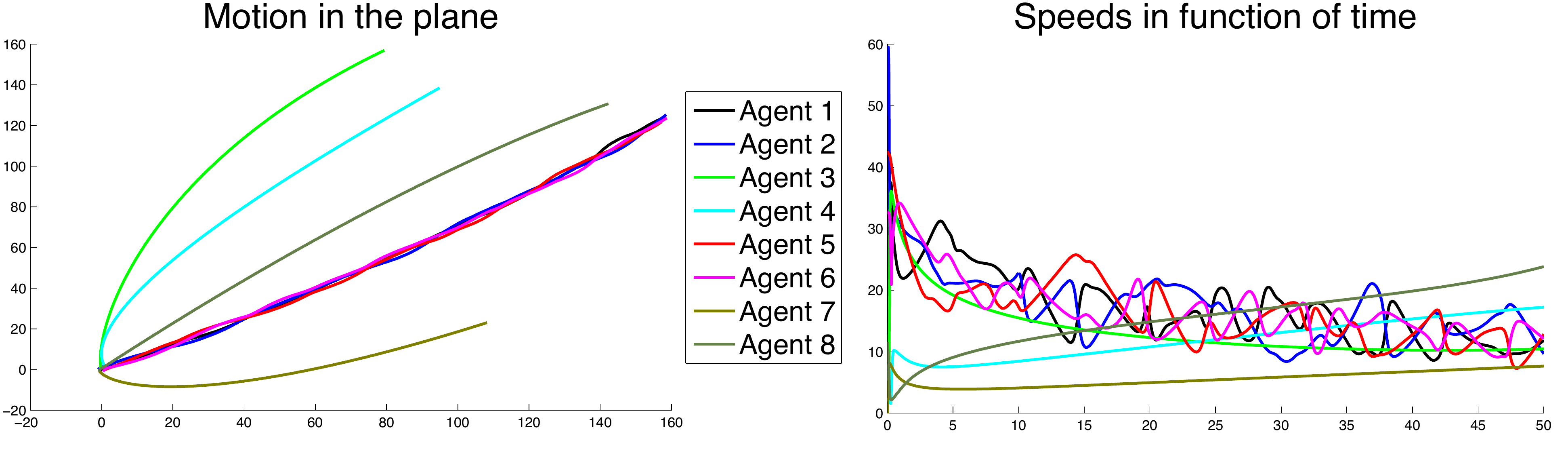}
\caption{Space evolution and speeds of the uncontrolled system.}
\label{fig:uncontr1_posvel}
\end{figure}

Though we can not infer the divergence of the system from this finite-time simulation, the portrayed situation seems far from going towards a flocking behavior. The only agents which seem to flock are Agent 1, Agent 2, Agent 5 and Agent 6 (resp. black, blue, red and magenta trajectories), as it is also visible by the corresponding speed graph, in which the speed of each agent is adjusted to the one of the other agents.

\begin{figure}[!htb]
\centering
\includegraphics[width=.67\linewidth]{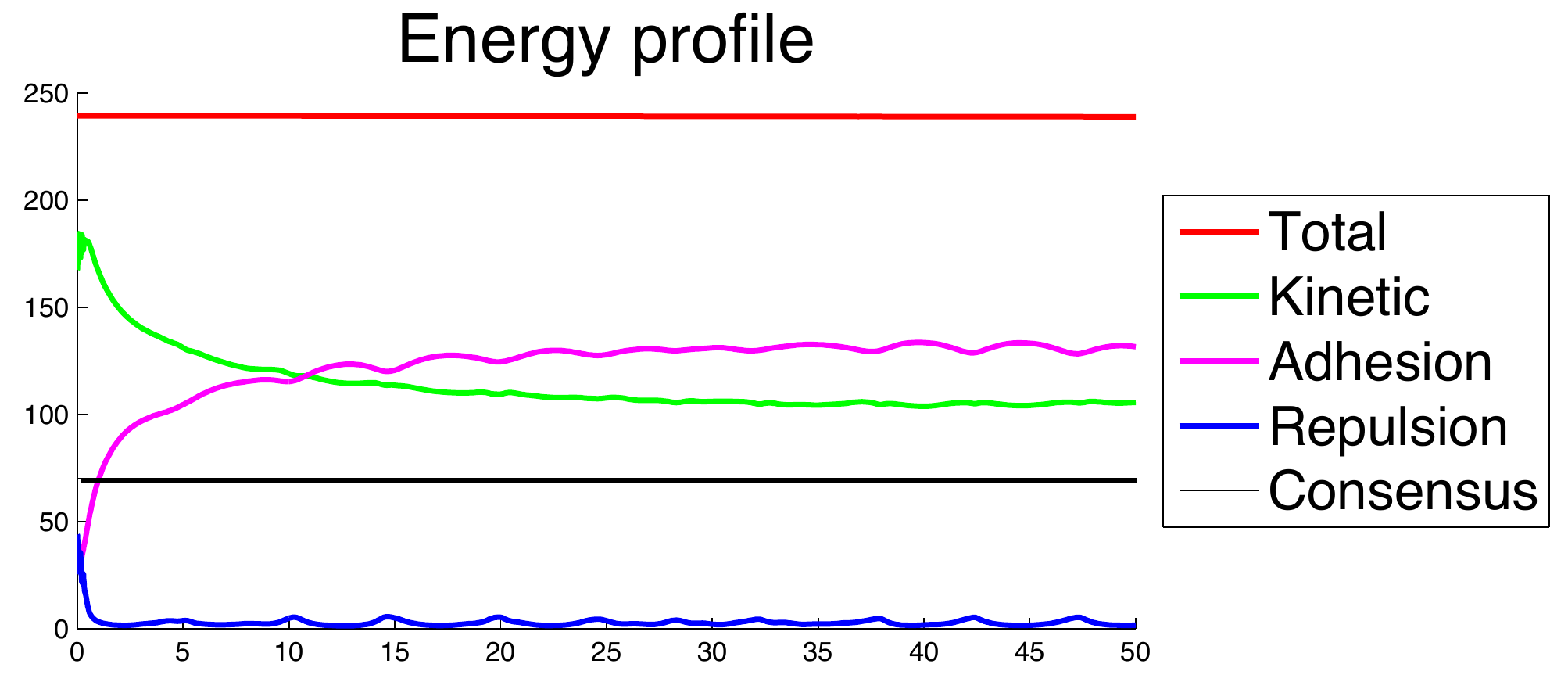}
\caption{Energy profile of the uncontrolled system.}
\label{fig:uncontr1_energy}
\end{figure}

Figure \ref{fig:uncontr1_energy} shows that the total energy $E$ (the red line) is constant and far away from the consensus threshold $\vartheta$ (black line). The increase in the distances between particles is reflected in an increase in the adhesion potential energy (the one due to $a$, see \eqref{eq:energyfunction}) and in a decrease in the repulsive one (due to $f$).

If instead we apply our sparse control strategy with $M = 35$ on the same system with the same initial conditions, the situation gets immediately far better from a consensus point of view, as Figure \ref{fig:contr1} witnesses.

\begin{figure}[!h]
\centering
\includegraphics[width=\linewidth]{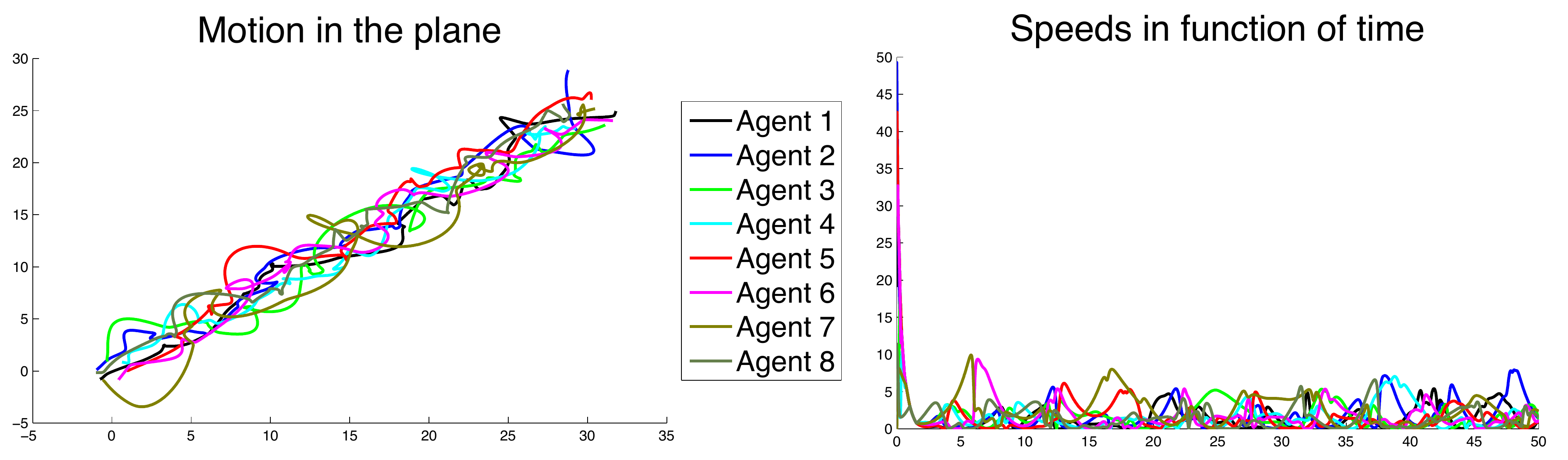}
\caption{Space evolution and speeds of the controlled system.}
\label{fig:contr1}
\end{figure}

The spatial evolution graph shows a braid movement which resembles a pattern near to flocking as it is commonly interpreted. The action of our control is evident from the energy profile of the system, portrayed in Figure \ref{fig:contr1_energy}, where the total energy is driven below the threshold in a very short time. The fall of the total energy is mainly due to its kinetic part (the green line), which is the only one directly affected by our control strategy. The sharp decrease of the kinetic energy is also witnessed in the graph showing the modulus of the speeds, where, after a quick, strong brake at the beginning, they stabilize at a very low level.

\begin{figure}[!htb]
\centering
\includegraphics[width=.67\linewidth]{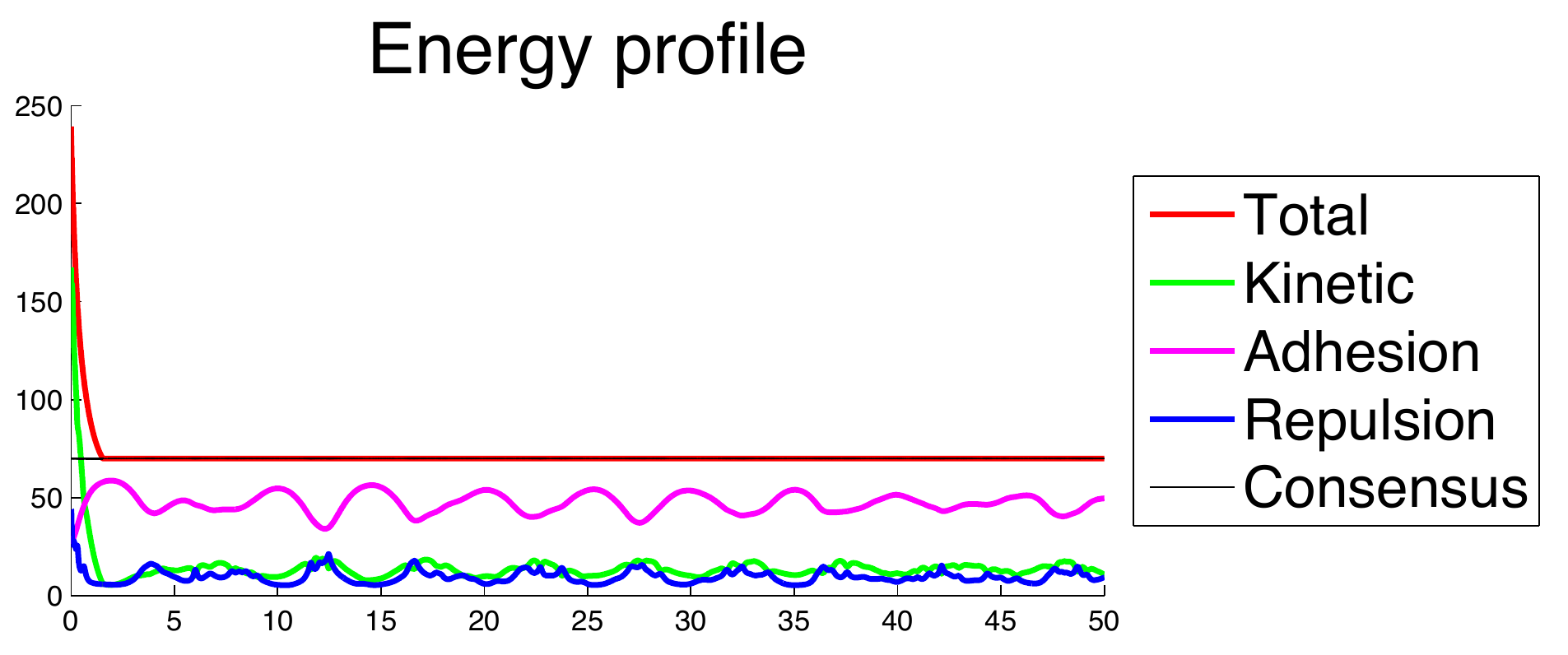}
\caption{Energy profile of the controlled system.}
\label{fig:contr1_energy}
\end{figure}

\subsubsection{Tuning the parameter $M$}

The second case study takes into account a system with a weaker communication rate than before ($\beta = 1.02$) and with a different form of the repulsive function ($p = 1.1$), and we apply on it our control strategy with several values for $M$.

\begin{figure}[!htb]
\centering
\includegraphics[width=\linewidth]{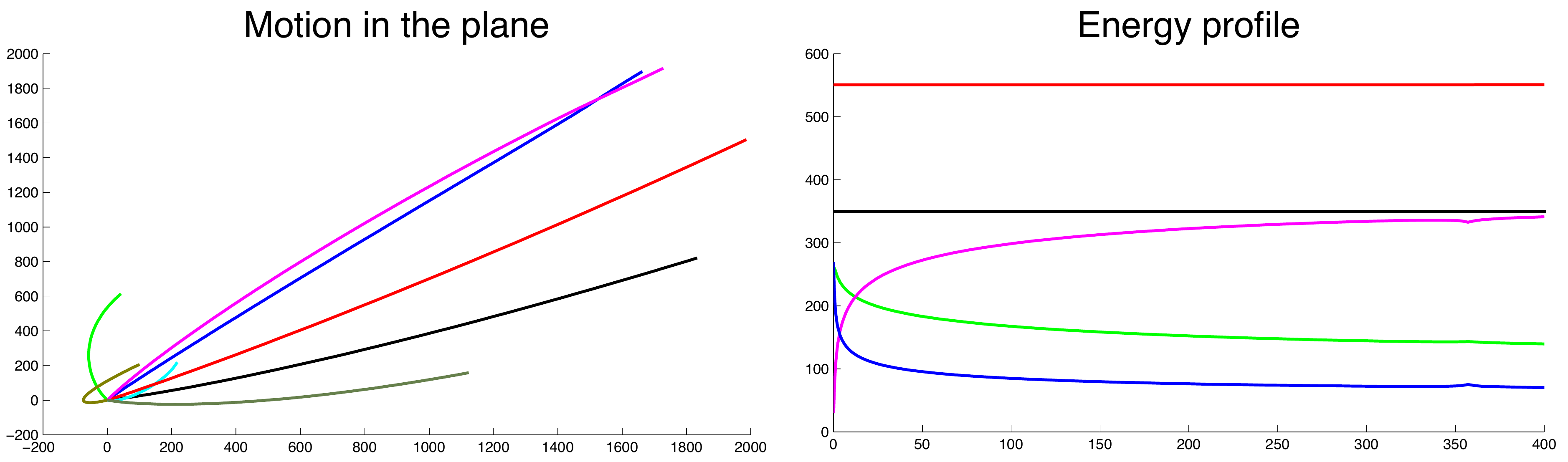}\\
\includegraphics[width=\linewidth]{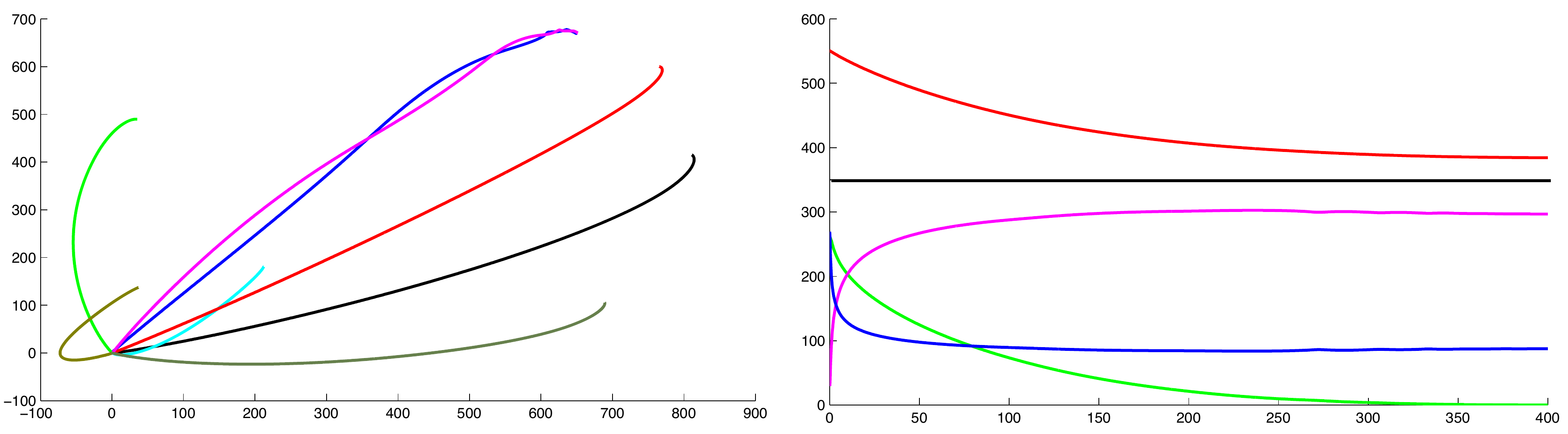}\\
\includegraphics[width=\linewidth]{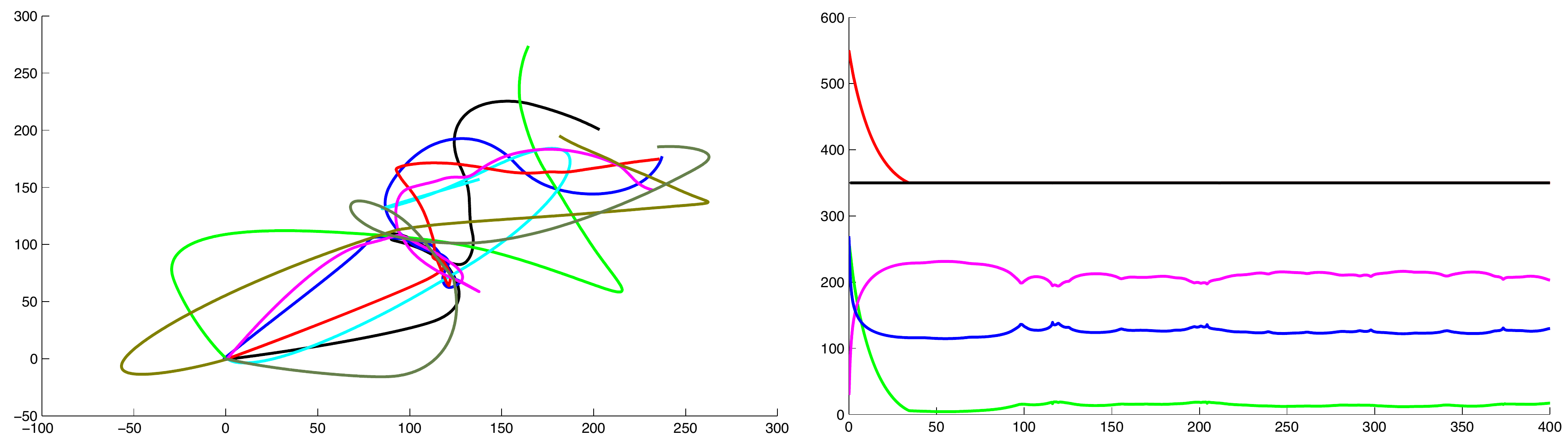}\\
\includegraphics[width=\linewidth]{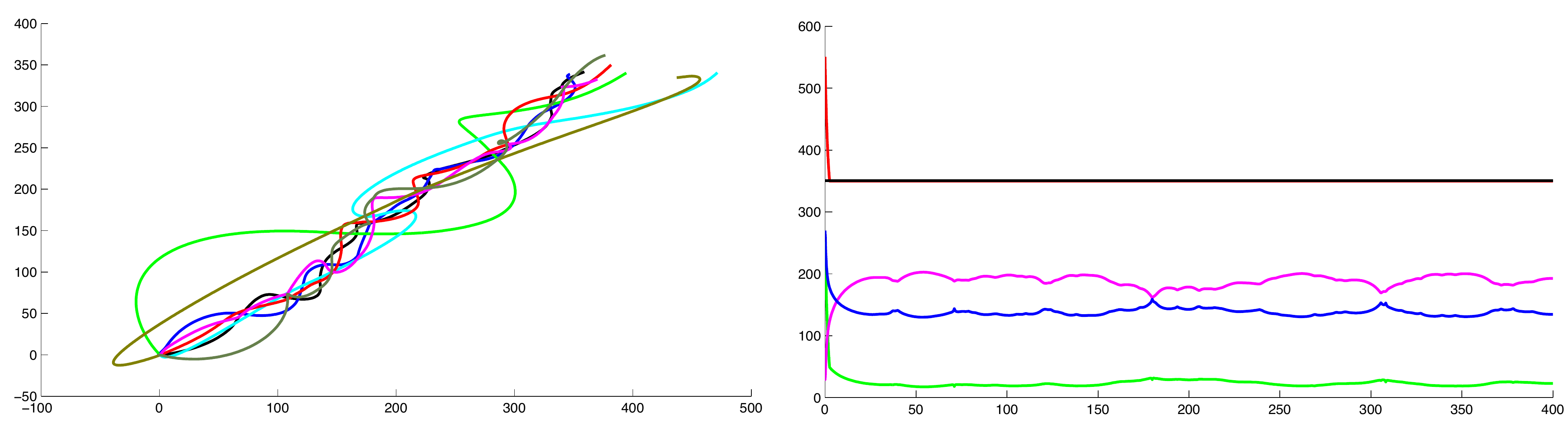}
\caption{Spatial evolutions (left) and relative energy profiles (right). From top to bottom: $M = 0, M = 0.1, M = 1, M = 10$. The colors in the left column stand for: total energy (red), consensus region (black), adhesion energy (magenta), repulsion energy (blue), kinetic energy (green).}
\label{fig:contr2pos}
\end{figure}
The top-left corner of Figure \ref{fig:contr2pos} is the uncontrolled system: it seems legitimate to suppose that it is very unlikely that the system will converge to consensus, especially looking at its energy profile graph (top-right corner of Figure \ref{fig:contr2pos}), which shows an increase in the adhesion potential energy, phenomenon associated to an increase in the distance between particles, as already pointed out. In the second line we see the spatial evolution graph of the same system but with the sparse control strategy acting with parameter $M = 0.1$, where the agents are starting to converge to consensus, as is also evident in their energy profile.
The two bottom lines of Figure \ref{fig:contr2pos} display the action of controls with $M = 1$ and $M = 10$, respectively. It is clear how the situation goes better as $M$ increases, which is due to the fact that the threshold is reached in shorter time (see the relative energy profile).

The right column of Figure \ref{fig:contr2pos} also clearly confirms the behavior of the decay rate of the energy as a function of $M$, as predicted by our analysis: $E(t)$ decreases as $e^{-kMt}$, for a certain constant $k>0$.

It is interesting to notice that convergence to the consensus region occurs even if the hypothesis $(b)$ of Theorem \ref{th:mainresult} is not met, i.e., $\vartheta$ is very far away from $E(0)$, as it is likely to be a sub-optimal sufficient condition. Indeed, in all the case studies above
\begin{align*}
	c = \exp \left(-\frac{2\sqrt{3}}{9}\frac{M \vnorm{\overline{v}(0)}^3}{E(0) \sqrt{E(0)} \left(\Lambda \sqrt{E(0)} + \frac{M}{N}\right)} \right) \approx 1,
\end{align*}
but, nonetheless, we were able to steer the system to consensus in finite time.

\subsubsection{A counterexample to unconditional sparse controllability}\label{sec:counter}

The last numerical experiment we report shows that in certain pathological situations the sparse control strategy can fail to steer a Cucker-Dong systems to consensus.

We consider $N = 2$ agents in dimension $d = 2$ and choose the interaction parameters as $H = 1$, $\beta = 2$, $p = 1.1$, $\Lambda = 0$, and $M = 1$. In this situation, the force balance $f - a$ is completely in favor of the repulsive force, as Figure \ref{fig:wrongbalance} shows: this means that, regardless of the mutual positions of the agents, they shall always be repelled from each other.

\begin{figure}[!h]
\centering
\includegraphics[width=.57\linewidth]{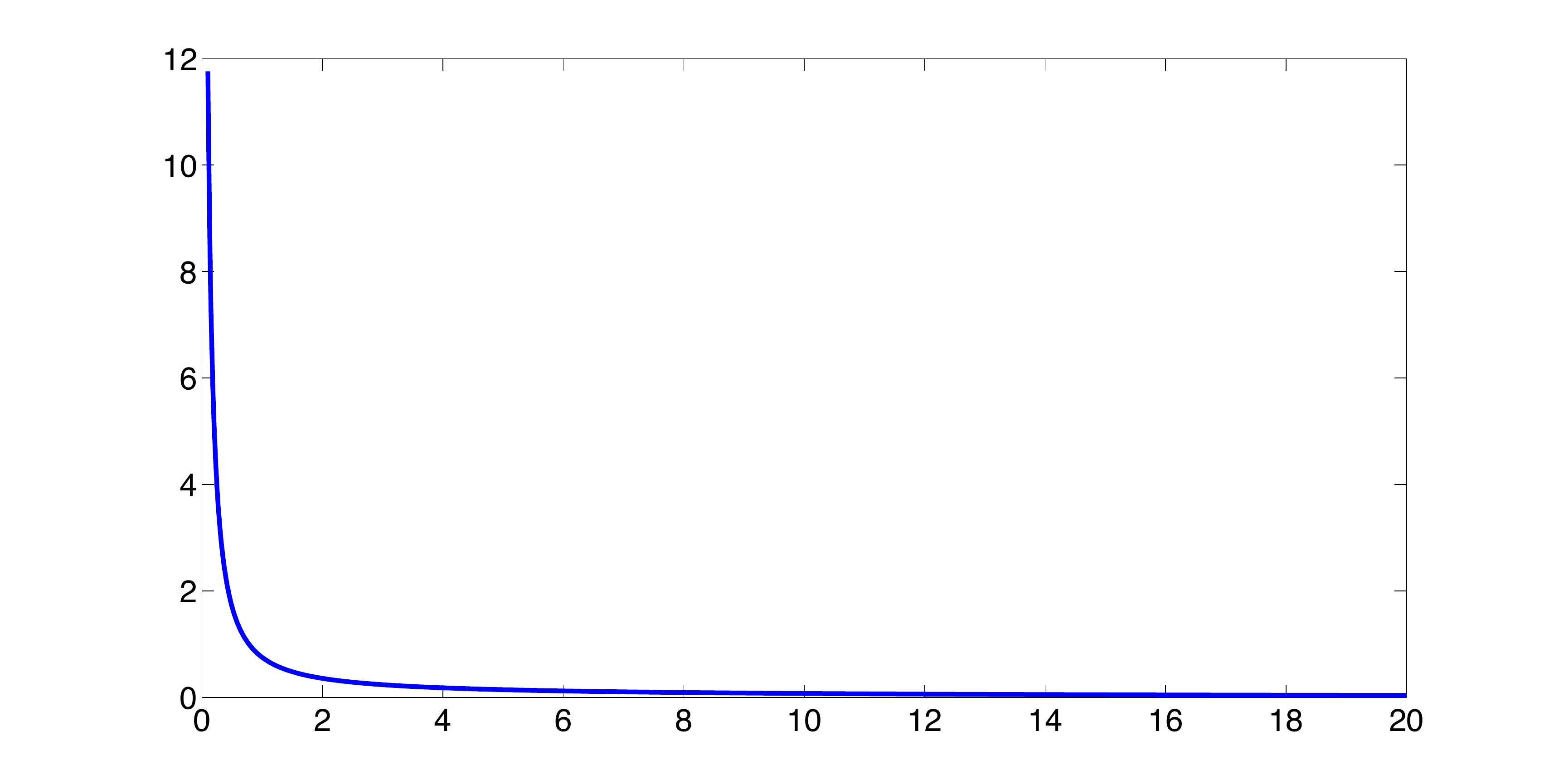}
\caption{Sum of the attraction and repulsion forces $h(r) = f(r)-a(r)$ as a function of the distance $r>0$ in the case study of Section \ref{sec:counter}}
\label{fig:wrongbalance}
\end{figure}

If we exert the sparse control strategy, the only result that we obtain is to freeze the agents where they are. Indeed, Figure \ref{fig:counter} shows that the agents' speeds are rapidly reduced to values close to 0 as an effect of the control (also visible in the energy profile from the trajectory of the kinetic energy), but the total energy stays far away from the consensus region (the black line). The picture makes very clear that the sparse feedback control does not affect the potential energy of the system, as the sum of the adhesion and repulsion energies stays constant in time. 

\begin{figure}[!h]
\centering
\includegraphics[width=\linewidth]{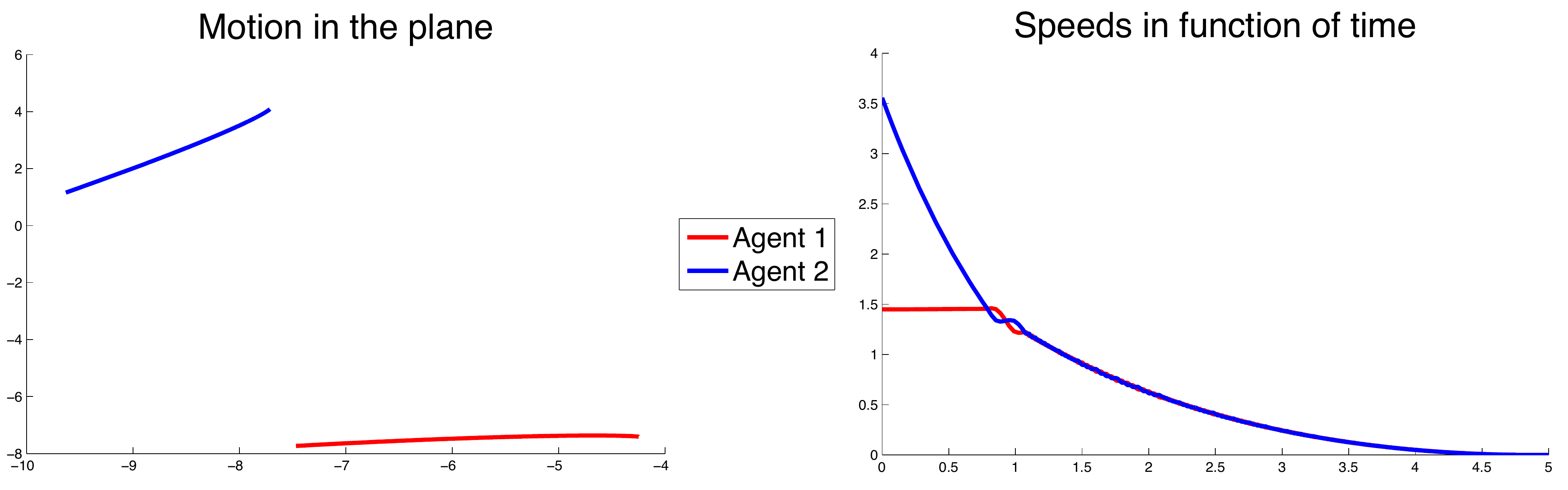}\\
\includegraphics[width=.63\linewidth]{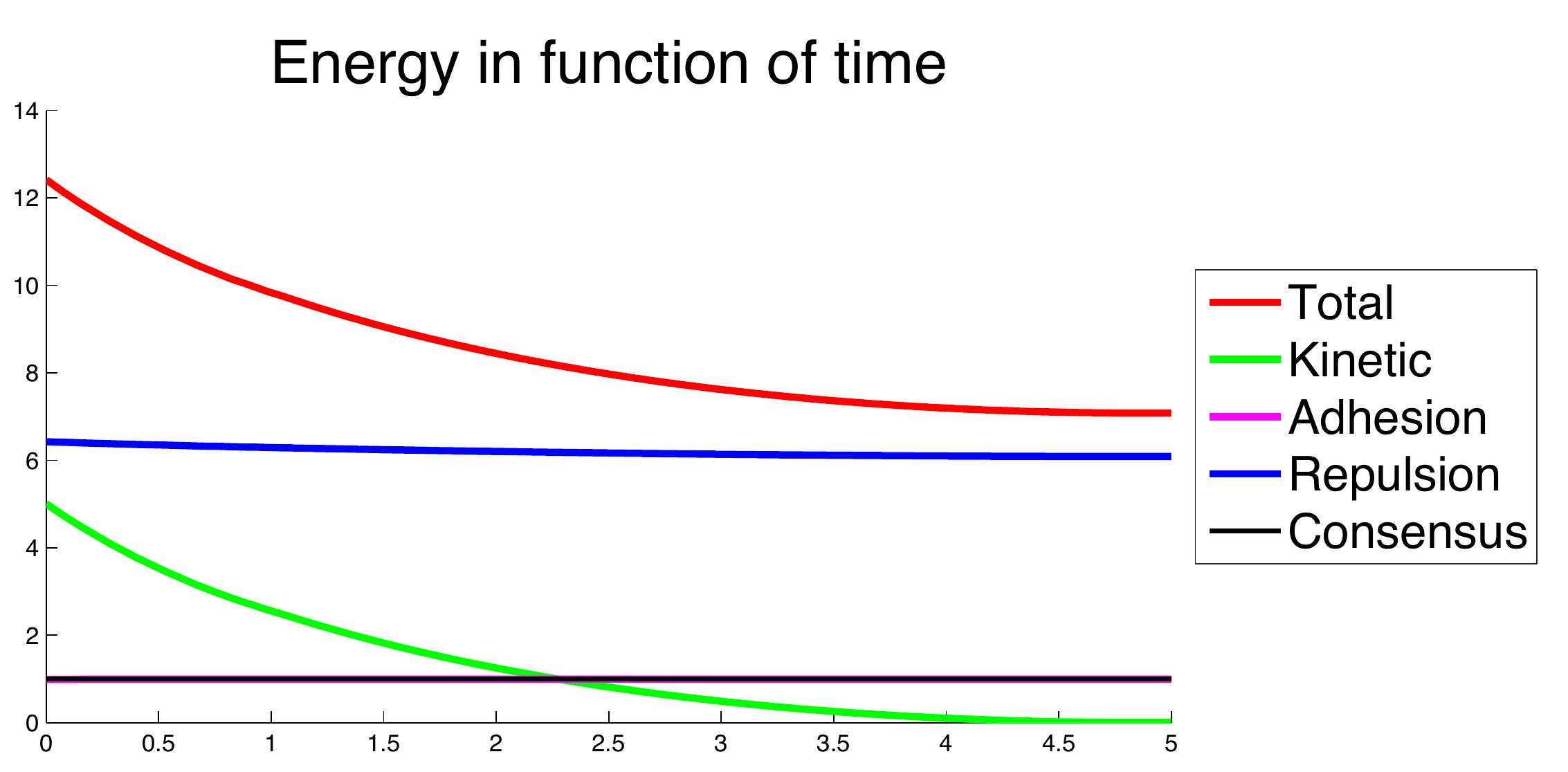}
\caption{Space evolution, speeds and energy profile of the system considered in Section \ref{sec:counter}.}
\label{fig:counter}
\end{figure}

Furthermore, notice that, as soon as we shut down the control, the two agents will start to move again in opposite directions (very slowly, since the energy of a system without control stays constant), hence not only the total energy remains above the threshold, but also the system is not in consensus.

However, it must be observed that the control strategy fails in this situation due to the peculiar nature of the system. As a matter of fact, being the force balance strictly repulsive, the trajectories of any solution will never remain cohesive. This leaves open the question whether there exist ``non-pathological'' instances of the Cucker-Dong model (in the sense that their solutions are not doomed to diverge regardless of the initial condition) for which the sparse control strategy does not work.

\begin{acknowledgement}
The authors acknowledge the support of the ERC-Starting Grant ``High-Dimensional Sparse Optimal Control'' (HDSPCONTR - 306274).
\end{acknowledgement}

\bibliographystyle{abbrv}	
\bibliography{Literature}

\end{document}